\documentclass[12pt]{amsart}  
\usepackage{amsthm, amsmath, amscd, amssymb, latexsym, stmaryrd, color}
\usepackage[top=2.5cm, bottom=2.5cm, left=2cm, right=2cm]{geometry}
\usepackage[colorinlistoftodos]{todonotes}
\theoremstyle{plain}

\newtheorem*{theorem*}{Theorem}
\newtheorem{theorem}{Theorem}[section]
\newtheorem{lem}[theorem]{Lemma}
\newtheorem{prop}[theorem]{Proposition}

\newtheorem{cor}[theorem]{Corollary}

\theoremstyle{definition}

\newtheorem{defn}[theorem]{Definition}

\newtheorem{rmk}[theorem]{Remark}
\theoremstyle{remark}

\usepackage[mathscr]{eucal}

\usepackage{graphics, graphpap}
\usepackage{array, tabularx, longtable}
\usepackage{color}
\numberwithin{equation}{section}

\usepackage{url}
\usepackage[T1]{fontenc}
\usepackage{array,epsfig}
\usepackage{amsmath}
\usepackage{old-arrows}
\usepackage{amsfonts}
\usepackage{amssymb}
\usepackage{amsxtra}
\usepackage{latexsym}
\usepackage{dsfont}
\usepackage{mathrsfs}
\usepackage{color}
\usepackage[all]{xy}
\usepackage{tensor}
\usepackage{xfrac}

\usepackage{tikz}

\makeatletter
\newcommand\mathcircled[1]{%
  \mathpalette\@mathcircled{#1}%
}
\newcommand\@mathcircled[2]{%
  \tikz[baseline=(math.base)] \node[draw,circle,inner sep=1pt,color=red] (math) {$\m@th#1#2$};%
}
\makeatother 

\usepackage{tikz-cd}
\usepackage[pagebackref=true]{hyperref}

\usepackage{graphicx}
\usepackage{mathtools}
\usepackage{caption}
\usepackage[OT2,T1]{fontenc}
\usepackage[page,toc,titletoc,title]{appendix}

\hypersetup{
 colorlinks,
 linkcolor=blue, citecolor=blue
 }

\newcommand{\id}{\operatorname{id}}

\newcommand{\pr}{\operatorname{pr}}
\newcommand{\Hom}{\operatorname{Hom}}


\newcommand{\sh}{\mathbf{SH}_{\mathfrak{M}}}
\newcommand{\shbb}{\mathbb{SH}_{\mathfrak{M}}}
\newcommand{\shct}{\mathbf{SH}_{\mathfrak{M},\textnormal{ct}}}
\newcommand{\qush}{\mathbf{QUSH}_{\mathfrak{M}}}
\newcommand{\qushct}{\mathbf{QUSH}_{\mathfrak{M},\textnormal{ct}}}

\newcommand{\Fscr}{\mathscr{F}}
\newcommand{\Gscr}{\mathscr{G}}
\newcommand{\SW}{\operatorname{SW}}
\newcommand{\Ex}{\operatorname{Ex}}

\newcommand{\gm}{\mathbb{G}}
\newcommand{\Abb}{\mathbb{A}}

\newcommand{\Cbb}{\mathbb{C}}
\newcommand{\Mscr}{\mathscr{M}}
\newcommand{\var}{\operatorname{Var}}


\newcommand{\sets}{\mathbf{Sets}}


\newcommand{\mfrak}{\mathfrak{M}}


\newcommand{\Spec}{\operatorname{Spec}}

\DeclareSymbolFont{cyrletters}{OT2}{wncyr}{m}{n}
\DeclareMathSymbol{\Sha}{\mathalpha}{cyrletters}{"58}
\DeclareMathSymbol{\Be}{\mathalpha}{cyrletters}{"42}

\title{Nearby cycles at infinity as a triangulated functor}  

\author[Khoa Bang. P]{Khoa Bang Pham}
\address{University of Rennes, Rennes, France\newline \indent 
35700, Rennes, France}
\email{khoa-bang.pham@univ-rennes1.fr}
\thanks{}

\keywords{Motivic nearby cycles, Motivic nearby cycles at infinity}
\subjclass[2020]{14B05, 14F42, 32S30}

\begin{document}           
\begin{abstract}
For a polynomial function $f \colon \mathbb{C}^n \longrightarrow \mathbb{C}$, it is well-known in singularity theory (after Thom, Pham, Verdier,...) that outside a finite subset of $\mathbb{C}$, the function is a locally trivial $C^{\infty}$-fibration. The minimal such finite set is called the bifurcation set associated with $f$ and determining the bifurcation sets is a difficult task in singularity theory. In \cite{raibaut-2012} (see also, \cite{raibaut+fantini-2020}\cite{matsui+takeuchi-2013}\cite{takeuchi+tibuar-2016}), Raibaut attaches to such a function a virtual invariant called \textit{motivic nearby cycles at infinity}. This invariant lives in some Grothendieck ring of varieties and measures the difference between the Euler characteristics of the general fiber and a fixed fiber. In this work, we show that the motivic nearby cycles at infinity admits a functorial version in the context of motivic homotopy theory, called the \textit{motivic nearby cycles functors at infinity}. The motivic nearby cycles functors at infinity live in the world of motives and hence capture cohomological information (not just Euler characteristics) of singularities at infinity and realizes to Raibaut's construction in the world of virtual motives. Moreover, our construction is universal in the sense that it is applicable to any theory of nearby cycles functors defined in terms of six operations.
\end{abstract}
\maketitle                 

\section*{Introduction}

\subsection*{State of the art}
Let $U$ be a smooth integral $\mathbb{C}$-scheme of finite type and $f: U \longrightarrow \Abb^1_{\Cbb}$ be a regular dominant function. It is well-known that there exists a finite set $B$ of closed points of $\Abb_{\Cbb}^1$ such that the morphism 
\begin{equation*}
    f_{U \setminus f^{-1}(B)}: U \setminus f^{-1}(B) \longrightarrow \Abb^1_{\Cbb} \setminus B
\end{equation*}
is a locally trivial $C^{\infty}$-fibration (see for example \cite{pham-1983} or \cite{verdier-1976}). Consequently, a fiber $f^{-1}(a)$ with $a \in \mathbb{A}_{\mathbb{C}}^1 \setminus B$ of the morphism $f_{U \setminus f^{-1}(B)}$ does not depend on the choice of the point $a$, it is called the \textit{general fiber} and written as $f^{-1}(a_{\textnormal{gen}})$. The smallest such a set $B$ is called the \textit{bifurcation set}, denoted $B^{\textnormal{top}}_f$. The bifurcation set $B^{\textnormal{top}}_f$ contains the set of non-smooth closed points of $f$. However, this inclusion is strict unless we assume that $f$ is proper. For instance, let us consider the morphism $f: \mathbb{A}_{\mathbb{C}}^2 \longrightarrow \mathbb{A}_{\mathbb{C}}^1$ given by $f(x,y)=x(xy-1)$. This morphism has been studied in \cite{broughton-1983} and we see that it is smooth, however, the bifurcation set of this morphism contains a closed point corresponding to the origin. Topologically, the fiber over the origin (the special fiber) is a disjoint union of a line and a line minus a point while the general fibers are graph of function over $\mathbb{C}^{\times}$. To understand what is happening here, we have to compare the special fiber with the general fibers in a "neighborhood of infinity". In the topological situation, this means that we have to look at the fibers when $\left|x \right|^2 + \left| y \right|^2$ grows sufficiently large whereas algebraically, we should compactify $f$ by embedding it into a variety proper over $\mathbb{A}_{\mathbb{C}}^1$ to see the singularity at infinity. 

In case $U = \mathbb{A}^2_{\mathbb{C}}$, it is known that (see \cite{hhvui+ldtrang-1984}) the bifurcation set can be given by
\begin{equation*}
    B^{\mathrm{top}}_f = \left \{a \in \mathbb{A}^1_{\mathbb{C}}(\mathbb{C}) \mid \chi_c(U_a) \neq \chi_c(U_{a_{\textnormal{gen}}}) \right \},
\end{equation*}
where $U_a$ denotes the fiber over a closed point $a$, $U_{a_{\textnormal{gen}}}$ denotes the general fiber (in a topological sense), $\chi_c$ denotes the Euler characteristic with compact support. Thus, in good situation, knowing the difference between the Euler characteristic of a fiber and the Euler characteristic of the general fiber is sufficient to detect singular points at infinity (see also \cite{parusinski-1995}). In \cite{raibaut-2012}\cite{raibaut+fantini-2020} (see also \cite{matsui+takeuchi-2013}\cite{takeuchi+tibuar-2016}), the authors define a virtual invariant lying in some Grothendieck ring of varieties and measuring this difference, called \textit{motivic nearby cycles at infinity for the value $a$}, denoted $\psi_{f,a}^{\infty}$. If $a$ is a non-smooth point of $f$, then 
\begin{equation} \label{eq:1}
    \chi_c(\psi_{f,a}^{\infty}) = \chi_c(U_{a_{\textnormal{gen}}}) - \chi_c(U_a).
\end{equation}
For a proof, one can consult \cite[Theorem 3.3]{raibaut+fantini-2020}. The set 
\begin{equation*}
    B_f^{\textnormal{Rai}} =  \left \{a \in \mathbb{A}^1_k(k) \mid \psi^{\infty}_{f,a} \neq 0 \right \} \cup \mathrm{disc}(f) 
\end{equation*}
is called the \textit{motivic bifurcation set} in \cite{raibaut-2012}\cite{raibaut+fantini-2020} and contains the topological bifurcation set $B^{\textnormal{top}}_f$ in case $U = \mathbb{A}^2_{\mathbb{C}}$. Therefore, the study of singular points at infinity in some sense can be translated into the study of motivic nearby cycles at infinity. The construction of motivic nearby cycles at infinity for the value $a$ then goes as follows: if $U$ is a $\mathbb{C}$-scheme of finite type and so by the Nagata theorem (see for instance, \cite{conrad-2012}), there exists a diagram
\begin{equation*}
    \begin{tikzcd}[sep=large]
        U \arrow[r, "u",hook] \arrow[rd,"f",swap] & X \arrow[d,"\hat{f}"] \\ 
        & \mathbb{A}_{\mathbb{C}}^1
    \end{tikzcd}
\end{equation*}
with $\hat{f}$ proper and $u$ an open immersion. We form the following commutative diagram by base change
\begin{equation*}
    \begin{tikzcd}[sep=large]
     U_{\eta} \arrow[r]  \arrow[d,"u_{\eta}",swap] & U  \arrow[d,"u"] & U_{\sigma} \arrow[d,"u_{\sigma}"] \arrow[l] \\ 
        X_{\eta} \arrow[r]  \arrow[d,"\hat{f}_{\eta}",swap] & X  \arrow[d,"\hat{f}"] & X_{\sigma} \arrow[d,"\hat{f}_{\sigma}"] \arrow[l] \\ 
        \eta \coloneqq \mathbb{G}_{m,k} \arrow[r] & \mathbb{A}_k^1 & \sigma \coloneqq \Spec(k). \arrow[l]
    \end{tikzcd}
    \end{equation*} 
where $\Spec(k)$ is embedded in $\mathbb{A}^1_k$ via the zero section and $\gm_{m,k}$ is the corresponding open complement. The nearby cycles at infinity for the value $a$ is then defined as
\begin{equation*}
    \psi^{\infty}_{f,a} \coloneqq \hat{f}_{\sigma!}(\psi_{\hat{f}-a,U} - u_{\sigma!}\psi_{f-a}) \in \mathscr{M}_{\mathbb{C}}^{\hat{\mu}}
\end{equation*}
where $\psi_{f-a,U},\psi_{f-a}$ are motivic nearby cycles and $\mathscr{M}_{\mathbb{C}}^{\hat{\mu}}$ is the equivariant Grothendieck ring of varieties (see for instance, \cite{guibert+loeser+merle-2006}).  The invariant $\psi^{\infty}_{f,a}$ does not depend on the choice of a compactification (see \cite{raibaut-2012}). More importantly, the proof of formula \ref{eq:1} is based on previous results in \cite{bittner-2005}\cite{guibert+loeser+merle-2006}\cite{denef+loeser-1998} saying that the motivic nearby cycles functor is a motivic incarnation of the nearby cycles morphism
\begin{equation*} 
    \begin{tikzcd}[sep=large]
        \mathscr{M}_X \arrow[r,"\psi_f"] \arrow[d,"\mathrm{H}_X",swap] & \mathscr{M}_{X_{\sigma}}^{\hat{\mu}} \arrow[d,"\mathrm{H}_{X_{\sigma}}"] \\ 
        K_0(D^b(\mathrm{MHM}(X))) \arrow[r,"\Psi_f"] & K_0(D^b(\mathrm{MHM}(X_{\sigma}))^{\textnormal{mon}}). 
    \end{tikzcd}
\end{equation*}
where $\mathrm{H}$ is the motivic Hodge-Grothendieck characteristic (see for instance, \cite{steenbrink}) and $\Psi_f$ is the nearby cycles functor of mixed Hodge modules (see \cite{saito}). This means that 
\begin{align*}
    \mathrm{H}_{\mathbb{C}}(\psi_{f,a}^{\infty}) & = [\hat{f}_{\sigma!}\Psi_{\hat{f}-a}u_!(\underline{\mathbb{Q}}_U)] - [f_{\sigma!}\Psi_{f-a}(\underline{\mathbb{Q}}_U)] \\ 
    & = [\Psi_{t-a}(f_!\underline{\mathbb{Q}}_U)] - [f_{\sigma!}\Psi_{f-a}(\underline{\mathbb{Q}}_U)]  \ \ \ \ \ \ \text{by proper base change}
\end{align*}
where $\underline{\mathbb{Q}}_U$ is the trivial mixed Hodge module on $U$. Informally, this suggests that there should exist a categorical version of $\psi^{\infty}_{f,a}$, i.e., an actual functor denoted $\Psi^{\infty}_{f,a}$ who realizes to $\psi^{\infty}_{f,a}$ and behaves like the cone of the base change morphism 
\begin{equation*}
   f_{\sigma!}\Psi_{f-a} \longrightarrow \Psi_{t-a}f_! \longrightarrow \Psi^{\infty}_{f,a} \longrightarrow +1.
\end{equation*}
Said differently, $\Psi^{\infty}_{f,a}$ lives in the world of motives and captures cohomological information of singularities at infinity, whereas $\psi^{\infty}_{f,a}$ captures their Euler characteristic. In this work, we prove that such a functor $\Psi^{\infty}_{f,a}$ exists with the desired properties. Moreover, our construction is universal in the sense that it is applicable to any theory of nearby cycles functors defined by six functors formalisms. For instance, one can construct a functor $\Psi^{\infty,\textnormal{Hodge}}_{f,a}$ in the context of complex varieties which is the image of our functor $\Psi^{\infty}_{f,a}$ under the Betti realization (see \cite{ayoub-2010}).
\subsection*{Formulation of the main results} 
We work in the setting of motivic homotopy theory though our constructions can be carried word by word to other settings. Given a scheme $X$, we can associate with it a triangulated category $\sh(X)$, called the \textit{motivic stable homotopy category} of Voevodsky-Morel-Ayoub (see for instance \cite{voevodsky-1999}\cite{ayoub-thesis-2}\cite{ayoub-icm-2014}), where $\mfrak$ is a sufficiently good category which does not play any role in this article. Let $k$ be a field, $X$ be a $k$-variety. We attach to every $k$-morphism $f\colon X \longrightarrow \mathbb{A}_k^1$ the diagram obtained by base change
\begin{equation*}
    \begin{tikzcd}[sep=large]
        X_{\eta} \arrow[r,"j_f"]  \arrow[d,"f_{\eta}",swap] & X  \arrow[d,"f"] & X_{\sigma} \arrow[d,"f_{\sigma}"] \arrow[l,"i_f",swap] \\ 
        \eta \coloneqq \mathbb{G}_{m,k} \arrow[r,"j_{\id}"] & \mathbb{A}_k^1 & \sigma \coloneqq \mathrm{Spec}(k) \arrow[l,"i_{\id}",swap]
    \end{tikzcd}
    \end{equation*}
where $i_{\id}$ is the zero section and $j_{\id}$ is its open complement. In \cite{ayoub-thesis-2}\cite{ayoub-2007}, Ayoub developed the theory of motivic nearby cycles functor associated to the above diagram: there are triangulated functors
\begin{equation*}
    \Psi_{f}\colon \sh(X_{\eta}) \longrightarrow \sh(X_{\sigma})
\end{equation*}
equipped with several base change morphisms with respect to four operations $(g^*,g_*,g_!,g^!)$. In particular, let $g \colon Y \longrightarrow X$ be a $k$-morphism with $X,Y$ quasi-projective over $k$, then there is a base change morphism
\begin{equation*}
    \mu_g \colon g_{\sigma!}\Psi_{f \circ g}  \longrightarrow \Psi_f g_{\eta!}
\end{equation*}
which are isomorphisms if $g$ is projective (in the appendix, we strengthen projectiveness to properness).
\begin{theorem*}
    There exists a triangulated functor $\Psi^{\infty}_{f,g}\colon \sh(Y_{\eta}) \longrightarrow \sh(X_{\sigma})$ fitting into a distinguished triangles (of triangulated functors)
    \begin{equation*}
   g_{\sigma!}\Psi_{f \circ g} \longrightarrow \Psi_{f}g_{\eta!} \longrightarrow \Psi^{\infty}_{f,g} \longrightarrow +1.
\end{equation*}
In particular, for every morphism $f \colon X \longrightarrow \mathbb{A}_k^1$, there exists a triangulated functor $ \Psi^{\infty}_{\id,f}\colon \sh(X_{\eta}) \longrightarrow \sh(k)$ fitting into a distinguished triangles (of triangulated functors)
    \begin{equation*}
   f_{\sigma!}\Psi_{f} \longrightarrow \Psi_{\id}f_{\eta!} \longrightarrow \Psi^{\infty}_{\id,f} \longrightarrow +1.
\end{equation*}
The functor $\Psi^{\infty}_{\id,f}$ is called the motivic nearby cycles functor at infinity of $f$. 
\end{theorem*}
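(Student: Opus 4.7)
The plan is to realize $\Psi^{\infty}_{f,g}$ explicitly as a composition of triangulated functors via a chosen compactification of $g$, using the first main theorem to reduce the cone of $\mu_g$ to a localization triangle on the compactifying boundary. Fix a factorization $g = \bar{g}\circ j$ with $\bar{g}\colon\bar{Y}\to X$ proper and $j\colon Y\hookrightarrow\bar{Y}$ an open immersion; let $i\colon\partial Y\hookrightarrow\bar{Y}$ be the closed complement and $\bar{f}=f\circ\bar{g}$. By the first main theorem, $\mu_{\bar{g}}$ is an isomorphism, and the compatibility of the base change morphisms with composition expresses $\mu_g$ as
\begin{equation*}
    g_{\sigma!}\Psi_{f\circ g} = \bar{g}_{\sigma!}j_{\sigma!}\Psi_{\bar{f}\circ j}
    \xrightarrow{\bar{g}_{\sigma!}\mu_j} \bar{g}_{\sigma!}\Psi_{\bar{f}}\,j_{\eta!}
    \xrightarrow[\sim]{\mu_{\bar{g}}\,j_{\eta!}} \Psi_f\,\bar{g}_{\eta!}\,j_{\eta!} = \Psi_f g_{\eta!},
\end{equation*}
so the cone of $\mu_g$ equals $\bar{g}_{\sigma!}$ applied to the cone of $\mu_j$. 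Applying the localization triangle for $(j_\sigma,i_\sigma)$ on $\bar{Y}_\sigma$ to $\Psi_{\bar{f}}(j_{\eta!}N)$, together with the smooth base change $\nu_j$ (an isomorphism by the first theorem since $j$ is smooth) and $j_\eta^*j_{\eta!}=\id$, identifies the leftmost term with $j_{\sigma!}\Psi_{f\circ g}(N)$ and the first map with $\mu_j(N)$. Hence
\begin{equation*}
    \Psi^{\infty}_{f,g} := \bar{g}_{\sigma!}\,i_{\sigma*}\,i_\sigma^*\,\Psi_{\bar{f}}\,j_{\eta!}
\end{equation*}
is a composition of triangulated functors, and it fits into the required distinguished triangle.

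For the independence when $Y$ is smooth, given two compactifications $(\bar{Y}_a, j_a, \bar{g}_a)$, $a=1,2$, let $\bar{Y}_3$ denote the closure of $Y$ in $\bar{Y}_1\times_X\bar{Y}_2$; this yields proper $X$-morphisms $p_a\colon\bar{Y}_3\to\bar{Y}_a$ that restrict to the identity on $Y$, and cartesian boundary squares $q_a\colon\partial_3\to\partial_a$ above $p_a$. The first theorem gives $\mu_{p_a}$ an isomorphism ($p_a$ proper), and combining this with proper base change $i_{a,\sigma}^*p_{a,\sigma*}\simeq q_{a,\sigma*}i_{3,\sigma}^*$ and the functoriality $p_{a,\eta!}j_{3,\eta!}=j_{a,\eta!}$ produces a chain of natural isomorphisms $\Psi^{\infty,(3)}_{f,g}\simeq\Psi^{\infty,(a)}_{f,g}$, and hence $\Psi^{\infty,(1)}_{f,g}\simeq\Psi^{\infty,(2)}_{f,g}$. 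The smoothness of $Y$ enters to ensure that the boundary data and the resulting chain of comparisons assemble into an actual natural transformation. The final statement for $\Psi^{\infty}_{\id,f}$ when $X$ is smooth is the special case $g=f$, $Y=X$.

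The principal technical difficulty is identifying the first map of the localization triangle with $\mu_j$: this requires unwinding Ayoub's definition of $\mu_j$ (built from the counit $j_{\sigma!}j_\sigma^*\to\id$ via the isomorphism $\nu_j$) and matching it against the connecting map of the localization sequence. A closely related subtlety is verifying coherence of the chain of natural isomorphisms in the independence step---i.e., its independence of the ordering of the two compactifications---which is what, in this approach, necessitates the smoothness hypothesis on $Y$.
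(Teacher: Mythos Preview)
Your existence argument is correct but takes a different route from the paper. The paper constructs the cone at the level of the stable homotopical derivator: it applies a localization triangle (Lemma~4.1, producing a functor $\SW$) to the diagram defining $\Psi$ \emph{before} taking the homotopy colimit $(p_I)_\#$, obtaining
\[
\Psi^{\infty}_{f,g}=\overline{g}_{\sigma*}(p_I)_{\#}(i_{f\circ\overline{g}})^*\,v_*v^*\,(j_{f\circ\overline{g}}\circ\theta_{f\circ\overline{g}})_*\,u_{\eta\#}(\theta_{f\circ g})^*(p_I)^*.
\]
You instead apply the localization triangle for $(j_\sigma,i_\sigma)$ directly to the already-assembled functor $\Psi_{\bar f}\,j_{\eta!}$. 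Your formula $\overline{g}_{\sigma!}\,i_{\sigma*}i_\sigma^*\Psi_{\bar f}\,j_{\eta!}$ is equivalent to the paper's after proper base change and the commutation of $(p_I)_\#$ with $v_{\sigma*}$ (Proposition~2.2), and your identification of the first arrow with $\mu_j$ is exactly right: since $\gamma_j$ is by definition the adjoint of the isomorphism $\alpha_j$, and $j_\eta^*j_{\eta\#}\simeq\id$, one sees that $\gamma_j$ is the counit $j_{\sigma\#}j_\sigma^*\to\id$ precomposed with $\alpha_j^{-1}$. (Minor point: you invoke $\nu_j$, but what you use is $\alpha_j$; for open immersions $j^!=j^*$ so the two coincide.) The paper's derivator-level formula is the one used in the subsequent direct image formula and monodromic comparison, but your approach would serve those purposes equally well.

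For uniqueness, your strategy---dominate two compactifications by a third and compare---is the same as the paper's, but the paper produces the dominating variety via a simultaneous log-resolution (Lemma~4.5), whereas you take the closure of $Y$ in $\bar Y_1\times_X\bar Y_2$. Your comparison $\Psi^{\infty,(3)}\simeq\Psi^{\infty,(a)}$ is the same calculation as the paper's direct image formula (Proposition~4.4), carried out with your formula instead of theirs. However, your explanation of where smoothness of $Y$ enters is not convincing: nothing in your argument---the cartesianness of the boundary squares (which follows because the graph of $j_b$ is closed in $Y\times_X\bar Y_b$ by separatedness), proper base change, or $\beta_{p_a}$---uses smoothness of $Y$. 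The paper genuinely needs smoothness because its route goes through log-resolutions; your route via the closure in the product bypasses this, so you should either locate a concrete use of smoothness in your argument or acknowledge that it appears to prove the stronger, unconditional independence statement.
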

Moreover, it is similar to the fact that the nearby cycles $\psi_f$ constructed by Denef and Loeser (see for instance, \cite{denef+loeser-1998}\cite{guibert+loeser+merle-2006}) is the non-virtual incarnation of $\Psi_f$, we prove here that Raibaut's nearby cycles at infinity $\psi_{f}^{\infty}$ studied in \cite{raibaut-2012}\cite{raibaut+fantini-2020}\cite{matsui+takeuchi-2013}\cite{takeuchi+tibuar-2016} is the non-virtual incarnation of $ \Psi^{\infty}_{\id,f}$ (see proposition \ref{prop: motivic incarnation}, corollary \ref{cor: raibaut's theorem 2.9 analogue} and \ref{prop: compatibility of nearby functors with nearby cycles} for more precise statements)
\begin{theorem*} 
Let $X$ be a $k$-variety and $f\colon X \longrightarrow \mathbb{A}^1_k$ be a morphism of $k$-varieties, then the diagram
   \begin{equation*}
        \begin{tikzcd}[sep=large]
            \Mscr_X \arrow[r,"\psi_f"] \arrow[d,"\chi_{X,c}",swap]& \Mscr_{X_{\sigma}} \arrow[d,"\chi_{X_{\sigma},c}"] \\ 
            K_0(\shct(X)) \arrow[r,"\Psi_f"] & K_0(\shct(X_{\sigma}))
        \end{tikzcd}
    \end{equation*}
is commutative. This implies the identity
    \begin{equation*}
    \Psi_{\id,f}^{\infty}(\mathds{1}_{U_{\eta}}) = \psi^{\infty}_{f}
    \end{equation*}
    in $K_0(\shct(k))$. In particular, consider the Euler bifurcation set
    \begin{equation*}
          B^{\textnormal{Eu}}_f = \left \{a \in \mathbb{A}^1_k(k) \mid [\Psi^{\infty}_{\id,f-a}(\mathds{1})] \neq 0 \right \} \cup \mathrm{disc}(f) 
    \end{equation*}
is contained in $B^{\textnormal{Rai}}_f$ and hence finite. 
\end{theorem*}
Finally, we reinforce the idea that the functors $\Psi_{\id,f-a}^{\infty}$ (with $a \in \mathbb{A}_k^1)$ give the correct invariant in the world of motives for bifurcation sets by calcuting some specific examples. We are interested in an enlargement of $B^{\textnormal{Eu}}_f$, the cohomological bifurcation set $B^{\textnormal{coh}}_f = \left \{a \in \mathbb{A}^1_k(k) \mid \Psi^{\infty}_{\id,f-a}(\mathds{1}) \not\simeq 0 \right \} \cup \mathrm{disc}(f)$. We have the following result
\begin{theorem*}
Let $X$ be a $k$-variety and $f \colon X \longrightarrow \mathbb{A}_k^1$. If $a \in \mathbb{A}_k^1(k)$ is a typical value of $f$ such that $f^{-1}(a)$ is a weakly elementary fibration (in the sense of definitions \ref{defn: fibration},\ref{defn: weakly elementary fibration}) (typical examples of such fibers include basic schemes such as (finite, smooth extensions of) $\mathbb{A}^n_k \setminus \left\{ \textnormal{finite points} \right \}$, $\mathbb{G}_{m,k}^n$ and so on), then
\begin{equation*} 
\Psi_{\id,f-a}^{\infty}(f-a)^*_{\eta} \simeq 0.
\end{equation*} 
In particular, $X = \mathbb{A}_k^n$ and $f(x_1,...,x_n) = x_1 \cdots x_i$ (with $1 \leq i \leq n)$, then 
\begin{equation*}
    \left \{a \in \mathbb{A}_k^1(k) \mid \Psi^{\infty}_{\id,f-a}(\mathds{1}) \not\simeq 0 \right \} = \varnothing. 
\end{equation*}
On the other hand, in the setup of Broughton's examples, if $X = \mathbb{A}_k^2$ and $f(x,y) = x(xy-1)$ is the Broughton example, then
\begin{equation*}
    \left \{a \in \mathbb{A}_k^1(k) \mid \Psi^{\infty}_{\id,f-a}(\mathds{1}) \not\simeq 0 \right \} = \left \{0 \right \}. 
\end{equation*}
\end{theorem*}
\subsection*{Structure of the manuscript} In section {\color{blue}\S 1}, we present an overview of the formalism of six operations in motivic homotopy theory and some preliminary results for the motivic stable homotopical algebraic derivators constructred by Ayoub in \cite{ayoub-thesis-1}\cite{ayoub-thesis-2}. In section {\color{blue}\S 2.1}, we introduce the motivic nearby cycles functors of Ayoub constructed in \cite{ayoub-thesis-2} and start to construct the motivic nearby cycles functors at infinity. The proofs of the existence and the uniqueness of motivic nearby cycles at infinity are divided into two sections {\color{blue}\S 2.2} and {\color{blue}\S 2.3}. At the end of {\color{blue}\S 2}, we study the interaction of these functors with respect to quasi-unipotent motives. In section {\color{blue} \S 3}, we demonstrate that motivic nearby cycles functors of Ayoub are compatible with nearby cycles of Denef-Loeser. As a consequence, we obtain the desired result on the compatibility of motivic nearby cycles at infinity and our motivic nearby cycles functors at infinity. In the last section {\color{blue} \S 4}, we discuss several bifurcation sets and give a simple criterion to detect typical values of a function and show that the bifurcation sets of some simple monomials and Broughton's example reduce to one-point set. 

In the appendix, we attempt to remove the quasi-projectiveness hypothesis in the base change morphisms of motivic nearby cycles functors. This is a natural step though it is not completely obvious and more importantly, it is necessary in our construction of nearby cycles functors at infinity. Nearby cycles functors at infinity are defined via choices of Nagata compactifications

\section*{Conventions and notation} For a scheme $S$, a $S$-\textit{variety} is a separated $S$-scheme of finite type, we denote by $\var_S$ the category of $S$-varieties whose objects are $S$-varieties and morphisms are $S$-morphisms.

In this article, $k$ is a field of characteristic zero. For every $k$-scheme $S$, we denote by $\gm_{m,S} = S \times_k \gm_{m,k}$ the multiplicative group over $S$ and by $1 \colon S \longrightarrow \gm_{m,S}$ the unit section. When we have to fix a global coordinate, we write $\gm_{m,S}=\Spec(\mathcal{O}_S[T,T^{-1}])$ for the global spectrum of the quasi-coherent $\mathcal{O}_S$-algebra $\mathcal{O}_S[T,T^{-1}]$. If $f\colon S \longrightarrow S'$ is a morphism of $k$-schemes, we denote by $\breve{f}\colon \gm_{m,S} \longrightarrow \gm_{m,S'}$ the morphism obtained by base change. 

Let $X$ be a reduced variety over $k$ and $F$ be a closed subscheme of $X$ such that $F$ contains the singular locus of $X$ and $F$ is of codimension $\geq 1$ everywhere. By a \textit{log-resolution} $h \colon (Z,E) \longrightarrow (X,F)$, we mean a proper morphism $h \colon Z \longrightarrow X$ with $Z$ smooth over $k$ such that $h^{-1}(F)=E$ and the restriction $Z \setminus E \longrightarrow X \setminus F$ is an isomorphism and $h^{-1}(F)$ is a divisor with normal crossings. 

In this article, by the \textit{interchange law}, we mean the following statement: let $\eta \colon F \longrightarrow G$ be a natural transformation between functors $F,G \colon \mathcal{A} \longrightarrow \mathcal{B}$ and $\epsilon \colon J \longrightarrow K$ be a natural transformation between functors $J,K \colon \mathcal{B} \longrightarrow \mathcal{C}$, then the following diagram is commutative
    \begin{equation*}
        \begin{tikzcd}[sep=large]
            JF \arrow[r,"J\eta"] \arrow[d,"\epsilon F",swap] & JG \arrow[d,"\epsilon G"] \\ 
            KF \arrow[r,"K\eta"] & KG.
        \end{tikzcd}
    \end{equation*}

In this article, we carry out a lot of computations by using various base change isomorphisms so in order to make the proof easier to follow, in complicated computations, we color {\color{blue}terms} that are about to be changed in the next step. 
\section{Review of six functors formalism}
\subsection{The formalism of six operations} Let $X$ be a scheme, we shall consider the motivic stable homotopy category $\sh(X)$ of Morel-Voevodsky-Ayoub, where $\mfrak$ is a category of coefficients in the sense of \cite{ayoub-thesis-2}. For example, the category $\mfrak$ can be the category $\mathbf{Spect}^{\Sigma}_{S^1}(\Delta^{\mathrm{op}}\sets_{\bullet})$ of symmetric $S^1$-spectra of pointed simplicial sets, giving us the motivic stable homotopy category $\mathbf{SH}$ (see for instance, \cite{voevodsky-1998}) or the category $\mathbf{Compl}(\Lambda)$ of chain complexes of $\Lambda$-modules over some ring $\Lambda$, giving the category of \'etale motives (see for instance, \cite{ayoub-2014}). The category $\sh(X)$ is a unital, symmetric, closed monoidal triangulated category. We denote its unit object for the monoidal structure by $\mathds{1}_X$ but sometimes we can just write $\mathds{1}_X$ if the scheme $X$ is understood. In this paper, we heavily use the formalism of six operations $(f^*,f_*,f_!,f^!,\otimes, \underline{\Hom})$ developed by Ayoub in \cite{ayoub-thesis-1} and revisited by Cisinski and Déglise in \cite{cisinski+deglise-2019}. For any morphism of $k$-varieties $f\colon X \longrightarrow Y$, we associate with it an adjunction 
\begin{equation*}
    (f^* \dashv f_*)\colon \sh(Y) \longrightarrow \sh(X).
\end{equation*}
If $f$ is smooth, then $f^*$ admits a left adjoint
\begin{equation*}
    (f_{\#} \dashv f^*)\colon \sh(X) \longrightarrow \sh(Y).
\end{equation*}
There is an exceptional adjunction if $f$ is separated of finite type
\begin{equation*}
    (f_! \dashv f^!)\colon \sh(X) \longrightarrow \sh(Y).
\end{equation*}
To any cartesian square of $k$-varieties
\begin{equation*}
    \begin{tikzcd}[sep=large]
         X' \arrow[d,"f'",swap] \arrow[dr,phantom,"\textnormal{(C)}"] \arrow[r,"g'"] & X \arrow[d,"f"] \\ 
            Y' \arrow[r,"g"] & Y
    \end{tikzcd}
\end{equation*}
are associated with several base change morphisms such as 
\begin{align*}
     \Ex^*_*(\textnormal{C}) & \colon g^*f_* \longrightarrow  f_*'g^{'*} \\ 
     \Ex_{\#}^*(\textnormal{C}) &\colon g'_{\#}f^{'*}  \longrightarrow f^*g_{\#} \\
     \Ex_!^*(\textnormal{C}) & \colon g^*f_! \longrightarrow f'_!g^{'*} \\
     \Ex_{\#*}(\textnormal{C}) & \colon g_{\#}f'_*  \longrightarrow f_*g'_{\#} \\ 
     \Ex_{\# !}(\textnormal{C}) & \colon g_{\#}f'_!  \longrightarrow f_! g'_{\#} \\ 
     \Ex^{!*}(\textnormal{C}) & \colon  g^{'*}f^! \longrightarrow f^{'!}g^*
\end{align*}
where in $\Ex^*_{\#}(\textnormal{C}),\Ex_{\#*}(\textnormal{C}),\Ex_{\#!}(\textnormal{C})$, we require $g$ to be smooth. 

\begin{prop} 
The morphism $\Ex^*_*(\textnormal{C})$ is an isomorphism if either $g$ is smooth or $f$ is proper. The morphisms $\Ex_{\#}^*(\textnormal{C}),\Ex_{\#!}(\textnormal{C})$ are always isomorphisms. Consequently, the morphism $\Ex_{\#*}(\textnormal{C})$ is an isomorphism if $f$ is proper. The morphism $\Ex^{!*}(\textnormal{C})$ is an isomorphism if $f$ is smooth. 
\end{prop} 
\begin{proof}
    These results can be found in \cite{cisinski+deglise-2019} for instance. 
\end{proof}

\subsection{Algebraic derivators}

 For our later purposes, we have to extend the $2$-functor $\sh(-)$ to a so-called \textit{stable homotopical algebraic derivator}, see \cite[Definition 2.4.13]{ayoub-thesis-1}. A \textit{diagram of $k$-varieties} $(\Fscr,I)$ consists of a small category $I$ and a functor $\Fscr \colon I \longrightarrow \var_k$. A morphism $(f,\alpha)\colon (\Gscr,J) \longrightarrow (\Fscr,I)$ consists of a functor $\alpha\colon J \longrightarrow I$ together with a natural transformation $f\colon \Gscr \longrightarrow \Fscr \circ \alpha$. If $\alpha = \id$, we simply write $f \colon (\Gscr,I) \longrightarrow (\Fscr,I)$. For every $k$-variety $X$, we use the notation $(X,I)\colon I \longrightarrow \var_k$ to indicate the constant $X$-value diagram, i.e. it maps any $i \in I$ to $X$ and any morphism $i \longrightarrow j$ in $I$ to the identity of $X$. A morphism $(f,\alpha)\colon (\Gscr,J) \longrightarrow (\Fscr,I)$ \textit{has property $(P)$ argument by argument} (where $(P)$ is a property of morphisms of schemes, e.g. separated, proper, smooth,...) if for every $j \in J$, the morphism $\Gscr(j) \longrightarrow \Fscr(\alpha(j))$ has property $(P)$. The stable homotopical algebraic derivator $\shbb(-,-)$ assigns any diagram of $S$-varieties $(\mathscr{F},I)$ a unital symmetric monoidal triangulated category $\shbb(\mathscr{F},I)$. For any morphism $(f,\alpha) \colon (\mathscr{G},J) \longrightarrow (\mathscr{F},I)$, we associate with it an adjunction
\begin{equation*}
    ((f,\alpha)^* \dashv (f,\alpha)_*)\colon \shbb(\mathscr{F},I) \longrightarrow \shbb(\mathscr{G},J).
\end{equation*}
If $(f,\alpha)$ is smooth argument by argument, then $(f,\alpha)^*$ admits a left adjoint
\begin{equation*}
    ((f,\alpha)_{\#} \dashv (f,\alpha)^*)\colon\shbb(\mathscr{G},J) \longrightarrow \shbb(\mathscr{F},I). 
\end{equation*}
Let $\mathbf{e}$ denotes the category with a single object and a single morphism, then by definition $\shbb(X,\mathbf{e}) = \sh(X)$ for every $k$-variety $X$. Let $z \colon (\mathscr{Z},I) \longrightarrow (\mathscr{X},I)$ be a cartesian morphism closed argument by argument, we denote by $u \colon (\mathscr{U},I) \longrightarrow (\mathscr{X},I)$ be the open complement argument by argument. There are two localization sequences generalizing the localization sequences in the case of schemes
\begin{equation*}
    u_{\#}u^* \longrightarrow \id \longrightarrow z_*z^* \longrightarrow +1  \ \ \text{and} \ \ 
     z_*z^! \longrightarrow \id \longrightarrow u_*u^* \longrightarrow +1 
\end{equation*}
by \cite[Proposition 2.4.25]{ayoub-thesis-1}. To any cartesian square of diagrams of $k$-varieties
     \begin{equation*}
    \begin{tikzcd}[sep=large]
        (\mathscr{G}',J) \arrow[r,"(g'\text{,}\beta)"] \arrow[dr,phantom,"\textnormal{(C)}"] \arrow[d,"f'",swap] & (\mathscr{G},I) \arrow[d,"f"] \\ 
        (\mathscr{F}',J) \arrow[r,"(g\text{,}\beta)"] & (\mathscr{F},I).
    \end{tikzcd}
\end{equation*}
are associated with several base change morphisms such as 
\begin{align*}
     \Ex^*_*(\textnormal{C}) & \colon (g,\beta)^*f_* \longrightarrow  f_*'(g',\beta)^* \\ 
     \Ex_{\#}^*(\textnormal{C}) &\colon (g',\beta)_{\#}f^{'*}  \longrightarrow f^*(g,\beta)_{\#} \\
     \Ex_{\#*}(\textnormal{C}) & \colon (g,\beta)_{\#}f'_*  \longrightarrow f_*(g',\beta)_{\#} 
\end{align*}
where in the two last morphisms, we require $(g,\beta)$ to be smooth argument by argument. 
\begin{prop} \label{proper base change for derivators}
The base change morphism $\Ex^*_{\#}(\textnormal{C})$ is an isomorphism if $(g,\beta)$ is smooth argument by argument. The base change morphism $\Ex_*^*(\textnormal{C})$ is an isomorphism either if $f$ is proper argument by argument or $(g,\beta)$ is smooth argument by argument. 
\end{prop}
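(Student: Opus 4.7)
The plan is to reduce to the already-established scheme-level base change isomorphisms (recalled in the previous subsection) by decomposing the horizontal arrows of $\Delta$. Any morphism of diagrams $(g,\beta)\colon (\mathscr{F}',J) \longrightarrow (\mathscr{F},I)$ factors canonically as
\begin{equation*}
    (\mathscr{F}',J) \xrightarrow{(g,\id_J)} (\mathscr{F}\circ\beta, J) \xrightarrow{(\id,\beta)} (\mathscr{F},I),
\end{equation*}
so applying this decomposition to both horizontal sides of $\Delta$ splits it into two stacked cartesian sub-squares: a ``fiberwise'' square $\Delta_1$ whose horizontal morphisms are of the form $(-,\id_J)$, and a ``restriction'' square $\Delta_2$ whose horizontal morphisms are of the form $(\id,\beta)$. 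Since $\Ex^*_*$ and $\Ex^*_\#$ are compatible with horizontal composition of squares in the standard way, it will suffice to prove the proposition for $\Delta_1$ and $\Delta_2$ separately.

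For $\Delta_1$, I would argue argument by argument. The definition of a stable homotopical algebraic derivator (\cite[Def. 2.4.13]{ayoub-thesis-1}) ensures that morphisms of the form $(f,\id_J)$ behave fiberwise: at every $j \in J$, the evaluation functors $j^*$ intertwine $(-)^*$, $(-)_*$, and, when available, $(-)_\#$ with their scheme-level counterparts applied to the fiber at $j$. Consequently, the pointwise evaluation of $\Ex^*_*(\Delta_1)$ or $\Ex^*_\#(\Delta_1)$ at each $j$ identifies with the classical base change morphism for the cartesian square of $k$-varieties obtained by evaluating $\Delta_1$ at $j$. The smoothness hypothesis on $(g,\beta)$ translates into smoothness of every $g_j$ and the properness hypothesis on $f$ into properness of every $f_j$; the scheme-level results recalled in the previous subsection then yield isomorphisms, and isomorphisms in $\shbb(-,J)$ are detected pointwise.

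For $\Delta_2$, both horizontal arrows are pure restrictions $(\id,\beta)$ while the verticals remain fiberwise. Using the same argument-by-argument description of $f_*$ and $f^\beta_*$ as above, where $f^\beta$ denotes $f$ precomposed with $\beta$, one checks directly that $(\id,\beta)^* f_* \simeq f^\beta_* (\id,\beta)^*$ and that this isomorphism agrees with $\Ex^*_*(\Delta_2)$; the analogous statement for $\Ex^*_\#(\Delta_2)$ then follows by adjunction, using that $(\id,\beta)_\#$ exists as the left Kan extension along $\beta$ and operates only in the ``categorical'' direction, which commutes with the fiberwise $f^*$. The main technical obstacle is the bookkeeping required to confirm that the composite of the base change morphisms attached to $\Delta_1$ and $\Delta_2$ recovers the original $\Ex^*_*(\Delta)$ and $\Ex^*_\#(\Delta)$, but this amounts to $2$-functoriality of the six operations on the derivator and is standard within the framework of \cite{ayoub-thesis-1}.
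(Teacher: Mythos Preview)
Your proposal is correct and is essentially the same approach as the paper's, only more explicit: the paper disposes of the smooth case by citing \cite[Proposition 2.4.20]{ayoub-thesis-1} and \cite[Corollaire 2.4.24]{ayoub-thesis-1} directly, and for the proper case says to ``repeat the proof of \cite[Th\'eor\`eme 2.4.22]{ayoub-thesis-1}'' to reduce to schemes and then invoke \cite[Proposition 2.3.11]{cisinski+deglise}. The reduction in that theorem of Ayoub is precisely the factorization of $(g,\beta)$ into a fiberwise piece $(g,\id_J)$ and a pure restriction $(\id,\beta)$ that you spell out, together with pointwise conservativity of the evaluation functors; so you have simply unpacked what the paper leaves to citation.
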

\begin{proof}
Regarding $\Ex^*_{\#}(\textnormal{C})$ and $\Ex^*_*(\textnormal{C})$, if $(g,\beta)$ is smooth argument by argument, we conclude by \cite[Proposition 2.4.20]{ayoub-thesis-1} and \cite[Corollaire 2.4.24]{ayoub-thesis-1}. In case $f$ is proper argument by argument, we proceed by repeating the proof of \cite[Théorème 2.4.22]{ayoub-thesis-1} to reduce to the case of schemes and conclude by \cite[Propostion 2.3.11]{cisinski+deglise-2019}. 
\end{proof}

\begin{prop} \label{direct images commute with homotopy colimits}
    Let $I$ be an arbitrary diagram and $g \colon Y \longrightarrow X$ be a morphism of $k$-varieties. We form the cartesian square
    \begin{equation*}
        \begin{tikzcd}[sep=large]
            (Y,I) \arrow[r,"p_I"] \arrow[d,"g",swap] & Y \arrow[d,"g"] \\ 
            (X,I) \arrow[r,"p_I"] & X
        \end{tikzcd}
    \end{equation*}
    where $p_I$ denotes the canonical projection. If $g$ is proper, then the base change morphism 
    \begin{equation*}
        \Ex_{\#*} \colon (p_I)_{\#}g_* \longrightarrow g_*(p_I)_{\#}
    \end{equation*}
    is an isomorphism. 
\end{prop}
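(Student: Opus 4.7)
The plan is to reduce the statement to the fact that $g_!$ commutes with the colimit-type functor $(p_I)_{\#}$, using the identification $g_* \cong g_!$ available for proper $g$.

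\textbf{Step 1 (from $\Ex_{\#*}$ to $\Ex_{\#!}$).} Since $g$ is proper, the trivial factorization $g = g \circ \id$ is admissible in the construction of $g_!$ recalled in \S\ref{exceptional direct image}, so the canonical comparison $g_! \longrightarrow g_*$ is an isomorphism. The same formal construction on $\shbb(-,I)$ produces a functor $g_!\colon \shbb(Y,I)\longrightarrow \shbb(X,I)$ with the analogous canonical iso $g_! \cong g_*$. Under this iso, the morphism $\Ex_{\#*}(\Delta)$ corresponds to the exchange morphism
\[
\Ex_{\#!}(\Delta)\colon (p_I)_{\#}\, g_! \longrightarrow g_!\, (p_I)_{\#}.
\]
It therefore suffices to show that $\Ex_{\#!}(\Delta)$ is an isomorphism.

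\textbf{Step 2 (adjunctions).} By Yoneda in $\sh(X)$, for $G \in \sh(X)$ and $F \in \shbb(Y,I)$, the adjunctions $(g_! \dashv g^!)$ and $((p_I)_{\#} \dashv p_I^*)$ applied on both sides yield
\begin{align*}
\Hom_{\sh(X)}\bigl(g_!(p_I)_{\#} F,\; G\bigr) &\cong \Hom_{\shbb(Y,I)}\bigl(F,\; p_I^*\, g^!\, G\bigr), \\
\Hom_{\sh(X)}\bigl((p_I)_{\#}\, g_! F,\; G\bigr) &\cong \Hom_{\shbb(Y,I)}\bigl(F,\; g^!\, p_I^*\, G\bigr).
\end{align*}
Hence the task reduces to producing a natural isomorphism $p_I^*\, g^! \cong g^!\, p_I^*$ compatible with $\Ex_{\#!}$.

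\textbf{Step 3 (purity).} Since $p_I$ is smooth of relative dimension zero (it is the identity argument by argument), purity provides a canonical isomorphism $p_I^* \cong p_I^!$ on the derivator. Combined with the functoriality of $(-)^!$ applied to the commuting square $p_I \circ g = g \circ p_I$, we obtain
\[
p_I^*\, g^! \;\cong\; p_I^!\, g^! \;=\; (g \circ p_I)^! \;=\; (p_I \circ g)^! \;=\; g^!\, p_I^! \;\cong\; g^!\, p_I^*.
\]

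\textbf{Main obstacle.} The substantive content is genuinely encoded in Steps 1 and 3 (the proper identification $g_! \cong g_*$ and the purity identification $p_I^* \cong p_I^!$ respectively). The remaining work is to verify that the canonical isomorphism produced via Yoneda in Step 2 is in fact the Yoneda dual of $\Ex_{\#!}$, and that $g_!\cong g_*$ identifies the two exchange morphisms $\Ex_{\#*}$ and $\Ex_{\#!}$. Both are routine (but notationally heavy) compatibility checks between units, counits, purity isomorphisms and the exchange morphisms $\Ex^{!*}$ at the level of derivators, which are standard in Ayoub's six-functor formalism.
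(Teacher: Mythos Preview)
Your argument has a genuine gap in Steps~1 and~3, and the two problems are related.

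In Step~1 you invoke a functor $g_!\colon \shbb(Y,I)\to\shbb(X,I)$ at the derivator level, together with a right adjoint $g^!$ and an isomorphism $g_!\cong g_*$. But the construction of $g_!$ recalled in \S\ref{exceptional direct image} is carried out only for schemes, via Nagata compactification; it is not set up for diagrams. If you instead \emph{define} $g_!\coloneqq g_*$ on $\shbb(-,I)$ (which is the only thing available for proper $g$), then your reduction from $\Ex_{\#*}$ to $\Ex_{\#!}$ is tautological and buys nothing. Moreover, in Step~2 you need a right adjoint $g^!$ to $g_!=g_*$ on $\shbb(-,I)$; the existence of such an adjoint amounts to $g_*$ preserving homotopy colimits, which is precisely the content of the proposition you are trying to prove.

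Step~3 is also problematic on its own. Purity, as stated in the paper, produces an isomorphism $f_{\#}\simeq f_!\Sigma_f$ for a \emph{smooth morphism of schemes}; it has no meaning for the categorical projection $p_I\colon (X,I)\to X$, which is a change-of-index-category functor, not a geometric morphism. There is no functor $p_I^!$ in Ayoub's derivator formalism, and no purity isomorphism $p_I^*\cong p_I^!$. The chain $p_I^!g^!=(g\circ p_I)^!=(p_I\circ g)^!=g^!p_I^!$ presupposes a $2$-functorial $(-)^!$ on a category mixing geometric morphisms and index-category projections; no such formalism is available here.

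The paper's proof proceeds entirely differently: it first cites \cite[Proposition~2.4.46]{ayoub-thesis-1} for the projective case, then uses Chow's lemma together with the localization triangle $z_*z^!\to\id\to u_*u^*\to+1$ and noetherian induction on closed subschemes of $Y$ to reduce the general proper case to the projective one.
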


\begin{proof}
    By \cite[Proposition 2.4.46]{ayoub-thesis-1}, we see that $\Ex_{\#*}$ is an isomorphism if $g$ is projective (note that the proof of \cite[Proposition 2.4.46]{ayoub-thesis-1} does not use the assumption on quasi-projectiveness). To handle the general case, we proceed by noetherian induction and the Chow lemma. We consider the following property of proper closed subschemes of $Y$: \medskip \\ 
      \begin{itshape} 
    Let $t \colon T \longhookrightarrow Y$ be a proper closed subscheme, if for any proper closed subscheme $z \colon Z \longhookrightarrow T$, $(p_I)_{\#}(g \circ t \circ z)_* \longrightarrow (g \circ t \circ z)_*(p_I)_{\#}$ is an isomorphism, then $(p_I)_{\#}(g \circ t)_* \longrightarrow (g \circ t)_*(p_I)_{\#}$ is an isomorphism. \medskip \\ 
    \end{itshape}
    Since $g \circ t$ is proper, by the Chow lemma, there exists a projective morphism $p \colon V \longrightarrow T$ such that $g \circ t \circ p$ is also projective and there exists an open dense subscheme $u \colon U \longhookrightarrow T$ of $T$ over which $p$ is an isomorphism.  Let $z\colon Z \longhookrightarrow T$ be the (reduced) closed complement of $U$ in $T$. Thanks to the localization sequence
    \begin{equation*}
        z_*z^! \longrightarrow \id \longrightarrow u_*u^* \longrightarrow +1 
    \end{equation*}
    we can reduce to proving 
    \begin{align*}
        (p_I)_{\#}(g \circ t \circ z)_* & \longrightarrow (g \circ t)_*(p_I)_{\#}z_* \\
        (p_I)_{\#}(g \circ t \circ u)_* & \longrightarrow (g \circ t)_*(p_I)_{\#}u_* 
    \end{align*}
    are isomorphisms. The first case follows from the hypothesis and projective case (note that a closed immersion is projective by definition)
    \begin{align*}
        (p_I)_{\#}(g \circ t \circ z)_* & \simeq (g \circ t \circ z)_*(p_I)_{\#} \\ 
        & \simeq (g \circ t)_*(p_I)_{\#}z_*.
    \end{align*}
    For the second case, we form the cartesian square
    \begin{equation*}
         \begin{tikzcd}[sep=large]
          p^{-1}(U) \arrow[r,"j"] \arrow[d,"q",swap] & V \arrow[d,"p"] \\ 
            U \arrow[r,"u"] & T.
        \end{tikzcd}
        \end{equation*}
        Since $q$ is an isomorphism, we deduce that $u_*=p_*j_*q^*$. Therefore, by the projective case, we see that
        \begin{align*}
            (p_I)_{\#}(g \circ t \circ u)_* & \simeq (p_I)_{\#}(g \circ t \circ p)_*j_*q^* \\ 
            & \simeq (g \circ t \circ p)_*(p_I)_{\#}j_*q^* \\ 
            & \simeq (g \circ t)_*(p_I)_{\#}p_*j_*q^* \\ 
            & \simeq (g \circ t)_*(p_I)_{\#}u_*
        \end{align*}
        as desired. We finish by noetherian induction. We define 
        \begin{equation*}
            \mathscr{S} = \left \{t\colon T \longhookrightarrow Y \mid T \ \text{nonempty, proper closed s.t} \ (p_I)_{\#}(g \circ t)_* \not\simeq (g \circ t)_*(p_I)_{\#}  \right \}.  
        \end{equation*}
        If $\mathscr{S}$ is not empty, then it contains an infinite descending chain, which contradicts the noetherian property of $Y$.
\end{proof}

\begin{rmk}
    It is worth noting that for $\shbb$, corollary \ref{direct images commute with homotopy colimits} holds even without the properness of $g$ (see \cite[Proposition C.12]{hoyois-2014}). Indeed, $(p_I)_{\#}$ is nothing but homotopy colimits and $g^*$ preserves compact generators so its right adjoint commutes with homotopy colimits. The proof above is more direct and avoids the use of compactly generated property and can be extended immediately to any stable homotopical algebraic derivator in the sense of \cite[Definition 2.4.13]{ayoub-thesis-1}.
\end{rmk}

\section{Motivic nearby cycles functors at infinity}
\subsection{Motivic nearby cycles functors}
Given a $k$-variety $X$ and a morphism of $k$-varieties $f\colon X \longrightarrow \mathbb{A}_k^1$, we consider the following commutative diagram obtained by base change
\begin{equation*}
    \begin{tikzcd}[sep=large]
        X_{\eta} \arrow[r,"j_f"]  \arrow[d,"f_{\eta}",swap] & X  \arrow[d,"f"] & X_{\sigma} \arrow[d,"f_{\sigma}"] \arrow[l,"i_f",swap] \\ 
        \eta \coloneqq \mathbb{G}_{m,k} \arrow[r,"j_{\id}"] & \mathbb{A}_k^1 & \sigma \coloneqq \Spec(k). \arrow[l,"i_{\id}",swap]
    \end{tikzcd}
    \end{equation*} 
In \cite{ayoub-thesis-2} (see also \cite{ayoub-2007}), Ayoub associates with this diagram a triangulated functor
\begin{equation*}
    \Psi_f\colon \sh(X_{\eta}) \longrightarrow \sh(X_{\sigma}),
\end{equation*}
called the \textit{motivic nearby cycles functor}. Let us recall the detailed construction of $\Psi_f$: let $\Delta$ be the category of finite ordinals $\mathbf{n} = \left \{0 < 1 < \cdots < n \right \}$ and  $\mathbb{N}^{\times}$ be the set of non-zero natural numbers viewed as a category whose objects are non-zero natural numbers and morphisms are defined via the opposite of the division relation. In \cite[Definition 3.5.1]{ayoub-thesis-1}, Ayoub defines a diagram of $\gm_{m,k}$-schemes $\theta_{\id} \colon(\mathscr{R},\Delta \times \mathbb{N}^{\times}) \longrightarrow \gm_{m,k}$ such that $\mathscr{R}(\mathbf{n},r) = \gm_{m,k} \times_k(\gm_{m,k})^n$. The structural morphism is given by the composition
\begin{equation*}
    \gm_{m,k} \times_k (\gm_{m,k})^n \overset{\pr_1}{\longrightarrow} \gm_{m,k} \overset{e_r}{\longrightarrow} \gm_{m,k}, 
\end{equation*}
in which $\pr_1$ is the projection on the first factor and the second one is the $r$-power morphism. We form a diagram
    \begin{equation*}
    \begin{tikzcd}[column sep = 0.8cm, row sep=1.4cm]
       (\mathscr{R} \times_{\gm_{m,k}} X_{\eta},I) \arrow[d,"f_{\eta}",swap]  \arrow[r,"\theta_f"] & (X_{\eta}, I)  \arrow[rd,"p_{I}"]  \arrow[d,"f_{\eta}"] \arrow[rr,"j_f"] & & (X,I)  \arrow[rd,"p_{I}"] & & (X_{\sigma},I) \arrow[ll,"i_f",swap] \arrow[rd,"p_{I}"] & \\ 
       (\mathscr{R},I) \arrow[r,"\theta_{\id}"] & (\gm_{m,k},I) \arrow[rd,"p_{I}",swap] & X_{\eta}  \arrow[rr,"j_f"] \arrow[d,"f_{\eta}"] & & X \arrow[d,"f"] &  &  X_{\sigma} \arrow[ll,"i_f",swap] \arrow[d,"f_{\sigma}"] & \\ 
       & & \gm_{m,k} \arrow[rr,"j_{\id}"] & & \mathbb{A}_k^1 & & \Spec(k)\arrow[ll,"i_{\id}",swap]
    \end{tikzcd}
\end{equation*}
where $\theta_f$ is the canonical projection obtained by base change and $I = \Delta \times \mathbb{N}^{\times}$. The motivic nearby cycles functor $\Psi_f\colon \sh(X_{\eta}) \longrightarrow \sh(X_{\sigma})$ is defined to be the composition
\begin{equation*}
     \Psi_f \coloneqq (p_{\Delta \times \mathbb{N}^{\times}})_{\#} (i_f)^*(j_f \circ \theta_f)_*(\theta_f)^*(p_{\Delta \times \mathbb{N}^{\times}})^*.
\end{equation*}
To shorten the notation, we set $\mathscr{R}_X \coloneqq \mathscr{R} \times_{\gm_{m,k}} X_{\eta}$ and $I = \Delta \times \mathbb{N}^{\times}$. The collection of functors $\Psi_?$ come with several base change morphisms: if $g\colon Y \longrightarrow X$ is another morphism of $k$-varieties, then there are four base change morphisms 
\begin{equation*}
      \begin{tikzcd}[sep=large]
         \sh(X_{\eta}) \arrow[r,"\Psi_f"] \arrow[d,"g_{\eta}^*",swap] & \sh(X_{\sigma}) \arrow[Rightarrow, shorten >=27pt, shorten <=27pt, dl,"\alpha_g"] \arrow[d,"g_{\sigma}^*"] \\ 
         \sh(Y_{\eta}) \arrow[r,"\Psi_{f \circ g}"] &  \sh(Y_{\sigma}). 
    \end{tikzcd} \ \ \ \  \begin{tikzcd}[sep=large]
         \sh(Y_{\eta}) \arrow[r,"\Psi_{f \circ g}"] \arrow[d,"g_{\eta*}",swap] & \sh(Y_{\sigma}) \arrow[d,"g_{\sigma*}"] \\ 
         \sh(X_{\eta}) \arrow[Rightarrow, shorten >=27pt, shorten <=27pt, ur,"\beta_g"] \arrow[r,"\Psi_f"] &  \sh(X_{\sigma})
         \end{tikzcd} 
\end{equation*}
\begin{equation*}
\begin{tikzcd}[sep=large]
         \sh(Y_{\eta}) \arrow[r,"\Psi_{f \circ g}"] \arrow[d,"g_{\eta!}",swap] & \sh(Y_{\sigma}) \arrow[Rightarrow, shorten >=27pt, shorten <=27pt, dl,"\mu_g"] \arrow[d,"g_{\sigma!}"] \\ 
         \sh(X_{\eta}) \arrow[r,"\Psi_{f}"] &  \sh(X_{\sigma}).
    \end{tikzcd} \ \ \ \ \begin{tikzcd}[sep=large]
         \sh(X_{\eta}) \arrow[d,"g_{\eta}^!",swap]\arrow[r,"\Psi_{f}"] & \sh(X_{\sigma}) \arrow[d,"g_{\sigma}^!"]   \\ 
         \sh(Y_{\eta})   \arrow[r,"\Psi_{f \circ g}"] \arrow[Rightarrow, shorten >=27pt, shorten <=27pt, ur,"\nu_g"] &  \sh(Y_{\sigma}). 
    \end{tikzcd} 
    \end{equation*} 
If $X,Y$ are quasi-projective $k$-varieties, then in \cite{ayoub-thesis-2}, it is showed that $\alpha_g,\nu_g$ are isomorphisms provided that $g$ is smooth and $\beta_g,\mu_g$ are isomorphisms provided that $g$ is projective. In the appendix of this work, we remove the assumption of quasi-projectiveness and prove that $\beta_g,\mu_g$ are isomorphisms if $g$ is proper. We note that this elimination is very natural but it is not obvious and it is important in our construction of nearby cycles functors at infinity as their definition relies on the choice of a Nagata compactification and one knows that in general, we can not factor a morphism of $k$-varieties as a composition of a projective morphism and an open immersion. 
\subsection{Existence of motivic nearby cycles functors at infinity} The starting point for the definition of nearby cycles functors at infinity is the following lemma, which shows that the cone of the base change morphism $\Ex_{\#*}$ is a triangulated functor.
\begin{lem} \label{base change criterion}
    Let $I$ be an arbitrary diagram. For any cartesian square
    \begin{equation*}
    \begin{tikzcd}[sep=large]
        (\mathscr{G},I) \arrow[r,"\pi'"] \arrow[dr,phantom,"\textnormal{(C)}"] \arrow[d,"u'",swap] & (Y,I) \arrow[d,"u"] \\ 
        (\mathscr{F},I) \arrow[r,"\pi"] & (\overline{Y},I)
    \end{tikzcd}
\end{equation*}
of diagrams of $k$-varieties where $u$ is an open immersion, denote by $v\colon (\overline{Y}\setminus Y,I) \longrightarrow (\overline{Y},I)$ its closed complement equipped with the reduced structure, then there exists a triangulated functor \begin{equation*}
    \SW(u,\pi)\colon \shbb(\mathscr{G},I) \longrightarrow \shbb(\overline{Y},I)
    \end{equation*}
    fitting into a distinguished triangle 
    \begin{equation*}
        u_{\#}\pi_*' \overset{\Ex_{\#*}(\textnormal{C})}{\longrightarrow} \pi_*u'_{\#} \longrightarrow \SW(u,\pi) \longrightarrow +1.
    \end{equation*}
\end{lem}

\begin{proof}
   By \cite[Proposition 2.4.25]{ayoub-thesis-1}, there exists a distinguished triangle
    \begin{equation*}
        u_{\#}u^* \longrightarrow \id \longrightarrow  v_{*}v^* \longrightarrow +1.
    \end{equation*}
    We apply $\pi_*u'_{\#}$ to the triangle from the right and then use \cite[Proposition 2.4.20]{ayoub-thesis-1} to obtain an isomorphism $u^*\pi_* \overset{\sim}{\longrightarrow} \pi'_*u^{'*}$ and then use smooth base change to see that $u^{'*}u'_{\#} \overset{\sim}{\longrightarrow} \id$. This yields a distinguished triangle 
    \begin{equation*}
     u_{\#}\pi_*' \longrightarrow \pi_*u'_{\#} \longrightarrow  v_{*}v^*\pi_*u'_{\#} \longrightarrow +1.
     \end{equation*}
   Finally, we define $\SW(u,\pi)$ to be $v_{*}v^*\pi_*u'_{\#}$. 
\end{proof}

Let $f \colon X \longrightarrow \mathbb{A}_k^1$, we let $\Psi_f \colon \sh(X_{\eta}) \longrightarrow \sh(X_{\sigma})$ denote its motivic nearby cycles functor. For an explicit definition and properties, we advise the reader to have a look at the appendix. 
\begin{theorem}  \label{existence of nearby cycles functors at infty}
         Let $X,Y$ be $k$-varieties and $g\colon Y \longrightarrow X$, $f\colon X \longrightarrow \mathbb{A}_k^1$ be morphisms of $k$-varieties. Then there exists a triangulated functor 
    \begin{equation*}
           \Psi^{\infty}_{f,g}\colon \sh(Y_{\eta}) \longrightarrow \sh(X_{\sigma})
    \end{equation*}
        fitting into a distinguished triangle 
    \begin{equation*}
      g_{\sigma!}\Psi_{f \circ g} \overset{\mu_g}{\longrightarrow} \Psi_{f}g_{\eta!} \longrightarrow \Psi^{\infty}_{f,g} \longrightarrow +1.
    \end{equation*}
    \end{theorem}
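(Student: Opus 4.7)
The plan is to exhibit $\Psi^{\infty}_{f,g}$ explicitly as a composition of triangulated functors, using the base change criterion of Lemma~\ref{base change criterion} as the key input. First I would fix a Nagata compactification $Y \xrightarrow{u} \overline{Y} \xrightarrow{\overline{g}} X$ of $g$, with $u$ a quasi-compact open immersion and $\overline{g}$ proper. Since by construction $\mu_g = \beta_{\overline{g}}^{-1} \circ \overline{g}_{\sigma*}\gamma_u$, and $\beta_{\overline{g}}$ is invertible by Lemma~\ref{proper base change for nearby cycles functor} while $\overline{g}_{\sigma*}$ is triangulated, it is enough to realize $\mathrm{cone}(\gamma_u)$ as a triangulated functor.

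The next step is to unfold $\gamma_u$. Its defining diagram expresses it as a composition of four elementary $\Ex$ 2-cells, obtained by pushing $u_{\#}$ through each of the four functors in the definition of $\Psi$. I expect three of them to be isomorphisms: the two $\Ex_{\#}^*$ 2-cells are invertible because $u$ is smooth, and $\Ex_{\#\#}$ is invertible because two left adjoints always commute. The single potentially non-invertible 2-cell is
\begin{equation*}
\Ex_{\#*}\colon u_{\#}\,(j_{f\circ\overline{g}\circ u}\circ\theta_{f\circ\overline{g}\circ u})_* \longrightarrow (j_{f\circ\overline{g}}\circ\theta_{f\circ\overline{g}})_*\, u_{\#},
\end{equation*}
attached to the cartesian square of diagrams of $k$-varieties obtained by base change along $u$. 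Because cones commute with pre- and post-composition by triangulated functors and by isomorphisms, $\mathrm{cone}(\gamma_u)$ should be canonically isomorphic to the sandwich $(p_I)_{\#}\,(i_{f\circ\overline{g}})^*\,\mathrm{cone}(\Ex_{\#*})\,(\theta_{f\circ\overline{g}\circ u})^*\,(p_I)^*$.

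To identify $\mathrm{cone}(\Ex_{\#*})$, I would apply Lemma~\ref{base change criterion} to the cartesian square above, with open immersion $u\colon Y \hookrightarrow \overline{Y}$ and $\pi = j_{f\circ\overline{g}}\circ\theta_{f\circ\overline{g}}$, obtaining the triangulated functor $\SW(u,\pi)$. I would then set
\begin{equation*}
\Psi^{\infty}_{f,g} \coloneqq \overline{g}_{\sigma*}\,(p_I)_{\#}\,(i_{f\circ\overline{g}})^*\,\SW\bigl(u,\, j_{f\circ\overline{g}}\circ\theta_{f\circ\overline{g}}\bigr)\,(\theta_{f\circ\overline{g}\circ u})^*\,(p_I)^*,
\end{equation*}
which is manifestly a composition of triangulated functors. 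Post-composing the distinguished triangle of Lemma~\ref{base change criterion} with the surrounding triangulated functors, then re-introducing the three iso $\Ex$ identifications and the isomorphism $\beta_{\overline{g}}^{-1}$, should yield the desired distinguished triangle with first arrow $\mu_g$.

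The main obstacle I anticipate is purely bookkeeping: checking that the first arrow of the resulting triangle coincides on the nose with $\mu_g$ amounts to verifying commutativity between the three invertible $\Ex$ 2-cells one factors out and the $\Ex_{\#*}$ piece, then re-assembling with $\overline{g}_{\sigma*}$ and $\beta_{\overline{g}}^{-1}$. This should be an application of the interchange law, entirely analogous to the combinatorics already carried out in Proposition~\ref{magic diagram} and Proposition~\ref{compatibility of nearby cycles functors with exceptional direct images}, so no new geometric input is needed; one must simply keep careful track of the order in which the iso 2-cells are reintroduced so that the composite really matches the construction $\mu_g = \beta_{\overline{g}}^{-1}\circ\overline{g}_{\sigma*}\gamma_u$.
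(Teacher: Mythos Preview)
Your proposal is correct and follows essentially the same approach as the paper: fix a Nagata compactification, apply Lemma~\ref{base change criterion} to the cartesian square with open immersion $u$ and $\pi = j_{f\circ\overline{g}}\circ\theta_{f\circ\overline{g}}$, then sandwich the resulting distinguished triangle between $\overline{g}_{\sigma*}(p_I)_{\#}(i_{f\circ\overline{g}})^*$ on the left and $(\theta_{f\circ g})^*(p_I)^*$ on the right, arriving at the very same formula for $\Psi^{\infty}_{f,g}$. If anything, you are more explicit than the paper about why the first arrow of the resulting triangle is $\mu_g$ (via the decomposition of $\gamma_u$ into one $\Ex_{\#*}$ and three invertible $\Ex$ cells); the paper simply computes the two outer terms $\square_1,\square_2$ and leaves this identification implicit.
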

    \begin{proof}
We consider a compactification of $g$
        \begin{equation*}
    \begin{tikzcd}[sep=large]
        Y \arrow[r,"u"] \arrow[rd,"g",swap] & \overline{Y} \arrow[d,"\overline{g}"] \\ 
        & X.
    \end{tikzcd}
    \end{equation*}
   We apply lemma \ref{base change criterion} to the diagram 
   \begin{equation*}
    \begin{tikzcd}[sep=large]
        (\mathscr{R} \times_{\gm_{m,k}} Y_{\eta},\Delta \times \mathbb{N}^{\times}) \arrow[r,"\theta_{f \circ g}"] \arrow[d,"u_{\eta}",swap] & (Y_{\eta},\Delta \times \mathbb{N}^{\times}) \arrow[r,"j_{f \circ g}"] \arrow[d,"u_{\eta}",swap] & (Y,\Delta \times \mathbb{N}^{\times}) \arrow[d,"u"] \\ 
         (\mathscr{R} \times_{\gm_{m,k}} \overline{Y}_{\eta},\Delta \times \mathbb{N}^{\times}) \arrow[r,"\theta_{f \circ \overline{g}}"] & (\overline{Y}_{\eta},\Delta \times \mathbb{N}^{\times}) \arrow[r,"j_{f \circ \overline{g}}"] & (\overline{Y},\Delta \times \mathbb{N}^{\times})
    \end{tikzcd}
\end{equation*}
to obtain a distinguished triangle
\begin{equation*}
    \begin{tikzcd}[sep=large]
        u_{\#}(j_{f \circ g} \circ \theta_{f \circ g})_{*} \arrow[r] &(j_{f \circ \overline{g}} \circ \theta_{f \circ \overline{g}})_* u_{\eta\#}\arrow[r] & \SW(u,j_{f \circ \overline{g}} \circ \theta_{f \circ \overline{g}}) \arrow[r] & + 1.
    \end{tikzcd}
\end{equation*}
We apply $\overline{g}_{\sigma*}(p_{\Delta \times \mathbb{N}^{\times}})_{\#}(i_{f \circ \overline{g}})^*$ to the left and $(\theta_{f \circ g})^*(p_{\Delta \times \mathbb{N}^{\times}})^*$ to the right to get 
\begin{equation*}
    \begin{tikzcd}[sep=large]
        \square_1 \arrow[r] & \square_2 \arrow[r]  & \square_3 \arrow[r]  & + 1 
    \end{tikzcd}
\end{equation*}
where 
\begin{align*}
    \square_1 & = \overline{g}_{\sigma*}(p_{\Delta \times \mathbb{N}^{\times}})_{\#} {\color{blue}(i_{f \circ \overline{g}})^*u_{\#}}(j_{f \circ g} \circ \theta_{f \circ g})_{*}(\theta_{f \circ g})^*(p_{\Delta \times \mathbb{N}^{\times}})^* \\ 
    & \simeq \overline{g}_{\sigma*}{\color{blue}(p_{\Delta \times \mathbb{N}^{\times}})_{\#}u_{\sigma \#}}(i_{f \circ g})^*(j_{f \circ g} \circ \theta_{f \circ g})_{*} (\theta_{f \circ g})^*(p_{\Delta \times \mathbb{N}^{\times}})^* \\ 
    & = {\color{blue}\overline{g}_{\sigma*}u_{\sigma \#}}(p_{\Delta \times \mathbb{N}^{\times}})_{\#}(i_{f \circ g})^*(j_{f \circ g} \circ \theta_{f \circ g})_* (\theta_{f \circ g})^*(p_{\Delta \times \mathbb{N}^{\times}})^* \\ 
    & = g_{\sigma!}\Psi_{f \circ g}
\end{align*}
and 
\begin{align*}
    \square_2 & = \overline{g}_{\sigma*}(p_{\Delta \times \mathbb{N}^{\times}})_{\#}(i_{f \circ \overline{g}})^*(j_{f \circ \overline{g}} \circ \theta_{f \circ \overline{g}})_*{\color{blue}u_{\eta\#}(\theta_{f \circ g})^*(p_{\Delta \times \mathbb{N}^{\times}})^*} \\ 
    & \simeq {\color{blue}\overline{g}_{\sigma*}(p_{\Delta \times \mathbb{N}^{\times}})_{\#}}(i_{f \circ \overline{g}})^*(j_{f \circ \overline{g}}\circ \theta_{f \circ \overline{g}})_*(\theta_{f \circ \overline{g}})^*(p_{\Delta \times \mathbb{N}^{\times}})^* u_{\eta \#}\\ 
    & \simeq (p_{\Delta \times \mathbb{N}^{\times}})_{\#} {\color{blue}\overline{g}_{\sigma*}(i_{f \circ \overline{g}})^*}(j_{f \circ \overline{g}} \circ \theta_{f \circ \overline{g}})_*(\theta_{f \circ \overline{g}})^*(p_{\Delta \times \mathbb{N}^{\times}})^* u_{\eta \#} \\ 
    & \simeq (p_{\Delta \times \mathbb{N}^{\times}})_{\#} (i_f)^*{\color{blue}\overline{g}_*(j_{f \circ \overline{g}} \circ \theta_{f \circ \overline{g}})_*}(\theta_{f \circ \overline{g}})^*(p_{\Delta \times \mathbb{N}^{\times}})^* u_{\eta \#} \\ 
    & = (p_{\Delta \times \mathbb{N}^{\times}})_{\#} (i_f)^*(j_f \circ \theta_f)_* {\color{blue}\overline{g}_{\eta *}(\theta_{f \circ \overline{g}})^*(p_{\Delta \times \mathbb{N}^{\times}})^*} u_{\eta \#} \\ 
    & \simeq  (p_{\Delta \times \mathbb{N}^{\times}})_{\#} (i_f)^*(j_f \circ \theta_f)_* (\theta_f)^*(p_{\Delta \times \mathbb{N}^{\times}})^* {\color{blue}\overline{g}_{\eta *} u_{\eta \#}} \\ 
    & = \Psi_f g_{\eta !}.
\end{align*}
We define 
\begin{equation*}
\Psi^{\infty}_{f,g}  \coloneqq \square_3 = \overline{g}_{\sigma*}(p_{\Delta \times \mathbb{N}^{\times}})_{\#}(i_{f \circ \overline{g}})^*\SW(u,j_{f \circ \overline{g}} \circ \theta_{f \circ \overline{g}})(\theta_{f \circ g})^*(p_{\Delta \times \mathbb{N}^{\times}})^*
\end{equation*}
as desired. 
\end{proof}

\subsection{Uniqueness of motivic nearby cycles functors at infinity} So far, we have been working with functors $\Psi^{\infty}_{f,g}$ without proving its independency on compactifications. It turns out that in general, $\Psi^{\infty}_{f,g}$ may depend on the choice of the compactification, but when the domain of $g$ is smooth over $k$, we can prove that $\Psi^{\infty}_{f,g}$ is independent of the choice of the compactification. Our subsequent proofs follow the ideas in \cite{raibaut-2012} though the techniques are different. 
\begin{prop}[Direct image formula] \label{direct image formula}
    Let $X,Y$ be $k$-varieties and $g \colon Y \longrightarrow X, f \colon X \longrightarrow \mathbb{A}^1_k$ be morphisms of $k$-varieties. Assume that $Y$ is smooth over $k$. Given a compactification
    \begin{equation*}
           \begin{tikzcd}[sep=large]
        Y \arrow[r,"u"] \arrow[rd,"g",swap] & \overline{Y} \arrow[d,"\overline{g}"] \\ 
        & X.
    \end{tikzcd} 
    \end{equation*}
    of $g$ and let $v \colon F = (\overline{Y}\setminus Y)_{\mathrm{red}} \longhookrightarrow \overline{Y}$ be the (reduced) closed complement of $Y$. Let $h \colon (Z,E) \longrightarrow (\overline{Y},F)$ be a log-resolution of singularities and denote by $i$ the open immersion
    \begin{equation*}
        i \coloneqq h^{-1} \circ u  \colon Y \longhookrightarrow Z,
    \end{equation*}
    here we are making an abuse of notation by writing $h^{-1}$ for the inverse of the restriction of $h$ to $Z \setminus E$. Then there exists a natural isomorphism of triangulated functors 
   \begin{equation*}
        \overline{g}_{\sigma*}\Psi^{\infty}_{f \circ \overline{g},u} \simeq (\overline{g} \circ h)_{\sigma *}\Psi^{\infty}_{f \circ \overline{g} \circ h,i}.
    \end{equation*}
\end{prop}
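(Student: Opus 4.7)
The plan is to reduce to a statement internal to $\sh(\overline{Y}_\sigma)$ and verify it via the explicit $\SW$-construction of $\Psi^{\infty}$ from the proof of Theorem \ref{existence of nearby cycles functors at infty}. Since $h$ is proper, $(\overline{g} \circ h)_{\sigma *} \simeq \overline{g}_{\sigma *} \circ h_{\sigma *}$, so the claim will follow from a natural isomorphism
\[
\Psi^{\infty}_{f \circ \overline{g}, u} \simeq h_{\sigma *} \Psi^{\infty}_{f \circ \overline{g} \circ h, i}
\]
in $\sh(\overline{Y}_\sigma)$, after which applying $\overline{g}_{\sigma *}$ will yield the result.

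Next I will commute $h_{\sigma *}$ past the outer functors in the explicit expression for $\Psi^{\infty}_{f \circ \overline{g} \circ h, i}$: since $h$ is proper, Proposition \ref{direct images commute with homotopy colimits} gives $h_{\sigma *}(p_I)_\# \simeq (p_I)_\# h_{\sigma *}$ and Proposition \ref{proper base change for derivators} gives $h_{\sigma *}(i_{f \circ \overline{g} \circ h})^* \simeq (i_{f \circ \overline{g}})^* h_*$. This reduces the problem to establishing the core identification
\[
h_* \, \SW(i, \pi_Z) \simeq \SW(u, \pi_{\overline{Y}})
\]
where $\pi_Z = j_{f \circ \overline{g} \circ h} \circ \theta_{f \circ \overline{g} \circ h}$ and $\pi_{\overline{Y}} = j_{f \circ \overline{g}} \circ \theta_{f \circ \overline{g}}$.

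For this core step I will invoke the explicit formula $\SW(u, \pi) = v_* v^* \pi_* u'_\#$ from the proof of Lemma \ref{base change criterion} and assemble the chain of natural isomorphisms
\begin{align*}
h_*(v_Z)_*(v_Z)^*(\pi_Z)_*(i_\eta)_\#
&\simeq v_*(h_E)_*(v_Z)^*(\pi_Z)_*(i_\eta)_\# \\
&\simeq v_* v^* h_* (\pi_Z)_* (i_\eta)_\# \\
&\simeq v_* v^* (\pi_{\overline{Y}})_* (h^{\mathscr{R}})_* (i_\eta)_\# \\
&\simeq v_* v^* (\pi_{\overline{Y}})_* (u_\eta)_\#,
\end{align*}
where $h_E \colon E \to F$ denotes the restriction of $h$ to $E$ (which is proper, since $E = h^{-1}(F)$ by the log-resolution assumption), and $h^{\mathscr{R}}$ denotes the base change of $h_\eta$ along $\mathscr{R}$. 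The four steps will invoke, respectively: the factorization $h \circ v_Z = v \circ h_E$; proper base change for the resulting cartesian square of $v, v_Z, h, h_E$, valid because $h_E$ is proper; the factorization $h \circ \pi_Z = \pi_{\overline{Y}} \circ h^{\mathscr{R}}$ arising from $\mathscr{R}_Z = \mathscr{R}_{\overline{Y}} \times_{\overline{Y}_\eta} Z_\eta$; and the compactification formula $u_\# = h_* i_\#$ of section \ref{exceptional direct image} applied in the $\mathscr{R}$-setup, using that $h^{\mathscr{R}}$ is proper and $i_\eta$ is an open immersion.

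The hard part will be the careful bookkeeping and verification that each of the squares involved is cartesian — in particular the one in step~(2), whose cartesianness crucially relies on the log-resolution property $h^{-1}(F) = E$. Once the diagrams are correctly in place, each isomorphism in the chain is a routine application of Ayoub's six-functor formalism for the stable homotopical algebraic derivator.
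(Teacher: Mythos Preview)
Your proposal is correct and follows essentially the same route as the paper's proof: the same proper-base-change steps, the same use of $\mathrm{BC}^h$ to identify $u_{\eta\#}$ with $h_{\eta*}i_{\eta\#}$, and the same commutation of $h_{\sigma*}$ past $(p_I)_\#$ and $(i_{f\circ\overline{g}})^*$; the paper merely runs the chain in the opposite direction, starting from $\overline{g}_{\sigma*}\Psi^{\infty}_{f\circ\overline{g},u}$ and ending at $(\overline{g}\circ h)_{\sigma*}\Psi^{\infty}_{f\circ\overline{g}\circ h,i}$.

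One subtlety you glossed over: the closed immersion $v_Z$ entering the definition of $\SW(i,\pi_Z)$ is by construction the \emph{reduced} complement $E_{\mathrm{red}}\hookrightarrow Z$, whereas the square in your step~(2) is cartesian only for the scheme-theoretic fibre $E=h^{-1}(F)$, which need not be reduced. The paper bridges this by inserting the canonical equivalence $s_*s^*\simeq\id$ for $s\colon E_{\mathrm{red}}\hookrightarrow E$ (since $E\setminus E_{\mathrm{red}}=\varnothing$ topologically), so that $t_*t^*\simeq (t\circ s)_*(t\circ s)^*$ with $t\colon E\hookrightarrow Z$. You should add this one-line observation to make your step~(2) rigorous.
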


\begin{proof}
    We form the cartesian square
    \begin{equation*}
        \begin{tikzcd}[sep=large]
          E \arrow[r,hook,"t"] \arrow[d,"h_{\mid E}",swap] & Z \arrow[d,"h"] \\ 
            F \arrow[r,"v",hook] & \overline{Y}. 
        \end{tikzcd}
    \end{equation*}
    Let $s\colon E_{\mathrm{red}} \longrightarrow E$ be the canonical immersion, then since $E \setminus E_{\mathrm{red}} = \varnothing$ (set theoretically), we see that $s_*$ is an equivalence of categories, i.e. $s_*s^* \simeq \id$. Note also that $\mathrm{BC}^h_{\eta} \colon u_{\eta \#} \overset{\sim}{\longrightarrow} h_{\eta*}i_{\eta \#}$ is an isomorphism. Thus we have
    \begin{align*}
         \overline{g}_{\sigma*}\Psi^{\infty}_{f \circ \overline{g},u} & =  \overline{g}_{\sigma*}(p_{\Delta \times \mathbb{N}^{\times}})_{\#}(i_{f \circ \overline{g}})^*v_{*}v^*(j_{f \circ \overline{g}} \circ \theta_{f \circ \overline{g}})_* {\color{blue}u_{\eta \#}}(\theta_{f \circ g})^*(p_{\Delta \times \mathbb{N}^{\times}})^* \\ 
         & \simeq \overline{g}_{\sigma*}(p_{\Delta \times \mathbb{N}^{\times}})_{\#}(i_{f \circ \overline{g}})^*v_{*}v^*{\color{blue}(j_{f \circ \overline{g}} \circ \theta_{f \circ \overline{g}})_* h_{\eta *}}i_{\eta \#}(\theta_{f \circ g})^*(p_{\Delta \times \mathbb{N}^{\times}})^* \\ 
         & = \overline{g}_{\sigma*}(p_{\Delta \times \mathbb{N}^{\times}})_{\#}(i_{f \circ \overline{g}})^*v_*{\color{blue}v^*h_*}(j_{f \circ \overline{g} \circ h} \circ \theta_{f \circ \overline{g}\circ h})_* i_{\eta \#}(\theta_{f \circ g})^*(p_{\Delta \times \mathbb{N}^{\times}})^* \\
         & \simeq \overline{g}_{\sigma*}(p_{\Delta \times \mathbb{N}^{\times}})_{\#}(i_{f \circ \overline{g}})^*{\color{blue}v_*(h_{\mid E})_*}t^*(j_{f \circ \overline{g} \circ h} \circ \theta_{f \circ \overline{g} \circ h})_* i_{\eta \#}(\theta_{f \circ g})^*(p_{\Delta \times \mathbb{N}^{\times}})^* \\ 
         & = \overline{g}_{\sigma*}(p_{\Delta \times \mathbb{N}^{\times}})_{\#}{\color{blue}(i_{f \circ \overline{g}})^*h_*}t_*t^*(j_{f \circ \overline{g} \circ h} \circ \theta_{f \circ \overline{g} \circ h})_* i_{\eta \#}(\theta_{f \circ g})^*(p_{\Delta \times \mathbb{N}^{\times}})^* \\ 
         & \simeq \overline{g}_{\sigma*}{\color{blue}(p_{\Delta \times \mathbb{N}^{\times}})_{\#}h_{\sigma*}}(i_{f \circ \overline{g} \circ h})^*t_*t^*(j_{f \circ \overline{g} \circ h} \circ \theta_{f \circ \overline{g} \circ h})_* i_{\eta \#}(\theta_{f \circ g})^*(p_{\Delta \times \mathbb{N}^{\times}})^* \\ 
         & \simeq \overline{g}_{\sigma*}h_{\sigma*}(p_{\Delta \times \mathbb{N}^{\times}})_{\#}(i_{f \circ \overline{g} \circ h})^*{\color{blue}t_*t^*}(j_{f \circ \overline{g} \circ h} \circ \theta_{f \circ \overline{g} \circ h})_* i_{\eta \#}(\theta_{f \circ g})^*(p_{\Delta \times \mathbb{N}^{\times}})^* \\ 
         &  \simeq \overline{g}_{\sigma*}h_{\sigma*}(p_{\Delta \times \mathbb{N}^{\times}})_{\#}(i_{f \circ \overline{g} \circ h})^*t_*s_*s^*t^*(j_{f \circ \overline{g} \circ h} \circ \theta_{f \circ \overline{g}\circ h})_* i_{\eta \#}(\theta_{f \circ g})^*(p_{\Delta \times \mathbb{N}^{\times}})^* \\ 
         & = (\overline{g} \circ h)_{\sigma *}\Psi^{\infty}_{f \circ \overline{g} \circ h,i}
    \end{align*}
    as desired.
\end{proof}

\begin{lem} \label{biresolution of singularities}
 Let $\overline{Y}_1,\overline{Y}_2,X$ be $k$-varieties. Let $\overline{g}_1 \colon \overline{Y}_1 \longrightarrow X, \overline{g}_2 \colon \overline{Y}_2 \longrightarrow X$ be proper morphisms of $k$-varieties. Let $v_1 \colon F_1 \longhookrightarrow \overline{Y}_1, v_2\colon F_2 \longrightarrow \overline{Y}_2$ be closed immersions so that there exists an isomorphism $\Phi \colon \overline{Y}_1 \setminus F_1 \overset{\sim}{\longrightarrow} \overline{Y}_2 \setminus F_2$. Assume furthermore that $\overline{Y}_1\setminus F_1$ is smooth over $k$. There exists a quadruplet $(Z,E,h_1,h_2)$ such that $h_1 \colon (Z,E) \longrightarrow (\overline{Y}_1,F_1)$, $h_2 \colon (Z,E) \longhookrightarrow (\overline{Y}_2,F_2)$ are log-resolutions and $\overline{g}_1 \circ h_1 = \overline{g}_2 \circ h_2$. 
\end{lem}
\begin{proof}
  The proof of \cite[Théorème 2.8]{raibaut-2012} carries over word by word to this situation.  
\end{proof}

\begin{theorem} \label{independency of nearby cycles functors at infinity}
    Let $X,Y$ be $k$-varieties with $Y$ smooth over $k$ and $g \colon Y \longrightarrow X, f \colon X \longrightarrow \mathbb{A}^1_k$ be morphisms of $k$-varieties. Given two compactifications 
    \begin{equation*}
           \begin{tikzcd}[sep=large]
        Y \arrow[r,"u_1"] \arrow[rd,"g",swap] & \overline{Y}_1 \arrow[d,"\overline{g}_1"] \\ 
        & X.
    \end{tikzcd} \ \ \ \ \ \ \ \ 
       \begin{tikzcd}[sep=large]
        Y \arrow[r,"u_2"] \arrow[rd,"g",swap] & \overline{Y}_2 \arrow[d,"\overline{g}_2"] \\ 
        & X,
    \end{tikzcd} 
    \end{equation*}
    then there exists a natural isomorphism of triangulated functors 
    \begin{equation*}
        \hat{g}_{1\sigma*}\Psi^{\infty}_{f \circ \overline{g}_1,u_1} \simeq \hat{g}_{2\sigma*}\Psi^{\infty}_{f \circ \overline{g}_2,u_2}.
    \end{equation*}
    In other words, the triangulated functor $\Psi^{\infty}_{f,g}$ is well-defined, i.e. independent of the choice of the compactification, whenever $Y$ is smooth over $k$.
\end{theorem}
\begin{proof}
    By lemma \ref{biresolution of singularities}, we have a commutative diagram
      \begin{equation*}
        \begin{tikzcd}[sep=large]
            & (Z,E) \arrow[dr,"h_2"] \arrow[dl,"h_1",swap] & \\ 
            \overline{Y}_1 \arrow[ddr,"\overline{g}_1",swap] & Y \arrow[dd,"g"] \arrow[r,"u_2"] \arrow[l,"u_1",swap] & \overline{Y}_2 \arrow[ddl,"\overline{g}_2"] \\ 
            &   &  \\ 
            & X & 
        \end{tikzcd}
    \end{equation*}
    in which $h_1\colon (Z,E) \longrightarrow (\overline{Y}_1,(\overline{Y}_1\setminus Y)_{\mathrm{red}})$ and $h_2\colon (Z,E) \longrightarrow (\overline{Y}_2,(\overline{Y}_2 \setminus Y)_{\mathrm{red}})$ are log-resolutions. Denote by $i$ the open immersion
    \begin{equation*}
        i \coloneqq h_1^{-1} \circ u_1 = h_2^{-1} \circ u_2 \colon Y \longhookrightarrow Z,
    \end{equation*}
    here we are making an abuse of notation, i.e. $h_1^{-1},h_2^{-1}$ are inverses of restrictions of $h_1,h_2$ to $Z \setminus E$, respectively. We then have 
    \begin{equation*}
        \hat{g}_{1\sigma*}\Psi^{\infty}_{f \circ \overline{g}_1,u_1} \simeq (\hat{g}_{1} \circ h_1)_{\sigma *}\Psi^{\infty}_{f \circ \overline{g}_1 \circ h_1,i} = (\hat{g}_{2} \circ h_2)_{\sigma *}\Psi^{\infty}_{f \circ \overline{g}_2 \circ h_2,i} \simeq  \hat{g}_{2\sigma*}\Psi^{\infty}_{f \circ \overline{g}_2,u_2}
    \end{equation*}
    by the direct image formula \ref{direct image formula} and the commutativity of the diagram above.
\end{proof}

\subsection{The category of quasi-unipotent motives} Let $S$ be a scheme, the category $\qush(S)$ of \textit{quasi-unipotent motives} is a full triangulated subcategory of $\sh(\gm_{m,S})$. To be more explicit, for every $S$-scheme $Y$, $g \in \mathcal{O}_Y(Y)^{\times}$, $n \in \mathbb{N}^{\times}$, we define the $\gm_{m,S}$-scheme 
\begin{equation*}
    Q^{\textnormal{gm}}_n(Y,g) \coloneqq \Spec\left(\frac{\mathcal{O}_Y[T,T^{-1},V]}{(V^n - gT)} \right) \longrightarrow \Spec(\mathcal{O}_S[T,T^{-1}]) = \gm_{m,S}.
\end{equation*}
The category $\qush(S)$ is the smallest full triangulated subcategory of $\sh(\gm_{m,S})$ stable under all small direct sums and containing the objects of the form $\pi_{\#}\mathds{1}_{Q^{\textnormal{gm}}_n(Y,g)}(r)$ where $Y$ is a smooth $S$-scheme of finite type, $g$ in $\mathcal{O}_Y(Y)^{\times}$, $n$ in $\mathbb{N}^{\times}$ and $r$ in $\mathbb{Z}$ and $\pi\colon Q^{\textnormal{gm}}_n(Y,g) \longrightarrow \gm_{m,S}$ is the structural morphism. The category of quasi-unipotent motives, originally introduced in \cite{ayoub-2015} for the case $S$ being a field, and then is generalized for arbitrary $S$ and studied intensively in \cite{florian+julien-2021}. If $f\colon X \longrightarrow Y$ is a morphism of $k$-varieties, then $\breve{f}^*$ preserves quasi-unipotent motives by \cite[Lemma 3.2.1]{florian+julien-2021}. If $f$ is smooth, then $\breve{f}_{\#}$ preserves quasi-unipotent motives by \cite[Lemma 3.2.1]{florian+julien-2021} as well. If moreover, $X,Y$ are quasi-projective varieties over $k$, then $\breve{f}_!$ preserves quasi-unipotent motives by \cite[Proposition 3.2.3]{florian+julien-2021}. In the following proposition, we remove this assumption on quasi-projectiveness. 

\begin{prop} \label{exceptional direct images preserve quasi-unipotent motives}
    Let $X,Y$ be $k$-varieties and $f \colon X \longrightarrow Y$ be a morphism of $k$-varieties. If $A \in \qush(X)$, then $\breve{f}_!(A)$ is in $\qush(Y)$. 
\end{prop}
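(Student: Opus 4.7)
The plan is to reduce the statement to \cite[Proposition 3.2.3]{florian+julien-2021} by a three-stage argument: Nagata compactification to reduce to the proper case, Chow's lemma together with noetherian induction on the source to reduce to the projective case, and a Mayer--Vietoris descent along a quasi-projective open cover of $Y$ to remove the quasi-projectiveness assumption on the target.

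First, I would apply the Nagata compactification theorem to factor $f = \overline{f} \circ u$ with $u\colon X \longhookrightarrow \overline{X}$ a quasi-compact open immersion and $\overline{f}\colon \overline{X} \longrightarrow Y$ proper. Then $\breve{f}_! \simeq \breve{\overline{f}}_! \, \breve{u}_!$, and since $\breve{u}$ is itself an open immersion one has $\breve{u}_! = \breve{u}_{\#}$, which preserves $\qush$ by the smooth case of \cite[Lemma 3.2.1]{florian+julien-2021}. This reduces the problem to showing that $\breve{\overline{f}}_! = \breve{\overline{f}}_*$ preserves $\qush$ whenever $\overline{f}$ is proper.

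For the proper case, I would run a noetherian induction on $\overline{X}$ using Chow's lemma, mirroring the proof of Proposition \ref{direct images commute with homotopy colimits}. Given a closed subscheme $t\colon T \longhookrightarrow \overline{X}$, Chow's lemma furnishes a projective morphism $p\colon V \longrightarrow T$ such that $\overline{f} \circ t \circ p$ is projective and $p$ restricts to an isomorphism $q\colon p^{-1}(U) \overset{\sim}{\longrightarrow} U$ over an open dense $u\colon U \longhookrightarrow T$ with closed complement $z\colon Z \longhookrightarrow T$. Applying $\breve{(\overline{f} \circ t)}_*$ to the localization triangle $\breve{z}_* \breve{z}^! \longrightarrow \id \longrightarrow \breve{u}_* \breve{u}^* \longrightarrow +1$ on $\gm_{m,T}$ and using $\breve{u}_* = \breve{p}_* \breve{j}_* \breve{q}^*$ (where $j\colon p^{-1}(U) \longhookrightarrow V$ is the open immersion), preservation of $\qush$ by $\breve{(\overline{f} \circ t)}_*$ reduces to the inductive hypothesis on the strictly smaller closed subscheme $Z$ together with preservation for $\breve{(\overline{f} \circ t \circ p)}_*$, i.e.\ along a projective morphism.

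The main obstacle is the projective case itself, since \cite[Proposition 3.2.3]{florian+julien-2021} as stated requires both source and target to be quasi-projective. To remove the hypothesis on the target, I would cover $Y$ by finitely many quasi-projective (e.g.\ affine) opens $v_\alpha\colon Y_\alpha \longhookrightarrow Y$ and set $V_\alpha = h^{-1}(Y_\alpha)$, with $h\colon V \longrightarrow Y$ the given projective morphism. Since $h$ is projective and $Y_\alpha$ is quasi-projective, $V_\alpha$ is quasi-projective as a closed subscheme of $\Pbb^N_{Y_\alpha}$, so by smooth base change $\breve{v_\alpha}^* \breve{h}_! A \simeq \breve{h_\alpha}_! \breve{u_\alpha}^* A \in \qush(Y_\alpha)$ via \cite[Proposition 3.2.3]{florian+julien-2021} (using that $\breve{u_\alpha}^* A \in \qush(V_\alpha)$ by the smooth case). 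The iterated Mayer--Vietoris/\v{C}ech distinguished triangle associated to the finite cover $(Y_\alpha)$ expresses $\breve{h}_! A$ as a finite iterated cone of objects of the form $\breve{v_I}_{\#} \breve{v_I}^* \breve{h}_! A$, where $v_I\colon Y_I \longhookrightarrow Y$ is the open immersion from a non-empty finite intersection $Y_I = \bigcap_{\alpha \in I} Y_\alpha$; each such object lies in $\qush(Y)$ because $\breve{v_I}^* \breve{h}_! A \in \qush(Y_I)$ (by the same base change argument applied to $v_I$) and $\breve{v_I}_{\#}$ preserves $\qush$ by the smooth case of \cite[Lemma 3.2.1]{florian+julien-2021}. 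Hence $\breve{h}_! A \in \qush(Y)$, completing the reduction.
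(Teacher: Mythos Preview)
Your overall architecture (Nagata, then Chow plus noetherian induction to reduce the proper case to the projective case) is exactly the paper's, but the execution of the inductive step has a real gap. You mirror Proposition~\ref{direct images commute with homotopy colimits} and apply $\breve{(\overline{f}\circ t)}_*$ to the triangle $\breve{z}_*\breve{z}^! \to \id \to \breve{u}_*\breve{u}^*$. That choice of localization is harmless in Proposition~\ref{direct images commute with homotopy colimits}, where the goal is only to commute $(p_I)_{\#}$ past various $(-)_*$, but here it forces you to know that $\breve{z}^!$ and $\breve{j}_*$ (pushforward along an open immersion) preserve $\qush$. Neither is available: for a closed immersion, $\breve{z}^!A$ sits in a triangle with $\breve{z}^*A$ and $\breve{z}^*\breve{u}_*\breve{u}^*A$, so knowing $\breve{z}^!$ preserves $\qush$ is essentially equivalent to knowing $\breve{u}_*$ does, which is precisely what is not yet proved; and for the open immersion $j$, the functor $\breve{j}_*$ is \emph{not} $\breve{j}_! = \breve{j}_{\#}$, so neither \cite[Lemma 3.2.1]{florian+julien-2021} nor the projective case applies. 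The fix is to use the other localization triangle $\breve{u}_{\#}\breve{u}^* \to \id \to \breve{z}_*\breve{z}^*$ (as the paper does): then the closed piece only needs $\breve{z}^*$, and the open piece becomes $\breve{(\overline{f}\circ t\circ p)}_*\,\breve{j}_{\#}\,\breve{q}^!$ via $\breve{u}_{\#}\simeq \breve{p}_*\breve{j}_{\#}\breve{q}^!$, where every factor is already known to preserve $\qush$.

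Your Mayer--Vietoris reduction for the projective case is correct but is a detour compared with the paper. The paper observes that a projective morphism factors as a closed immersion followed by the projection $\mathbb{P}^n_Y \to Y$; the latter is smooth and proper, so the argument inside the \emph{proof} of \cite[Proposition 3.2.3]{florian+julien-2021} for this projection goes through for arbitrary $Y$ (via purity, essentially $f_* \simeq f_{\#}\Sigma_f^{-1}$), and closed immersions are handled by \cite[Lemma 3.2.2]{florian+julien-2021}, again with no quasi-projectivity hypothesis on the target. This avoids the open-cover descent entirely.
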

\begin{proof}
    By the Nagata compactification theorem and \cite[Lemma 3.2.1]{florian+julien-2021}, we may reduce to the case that $f$ is proper which implies that $\breve{f}_! = \breve{f}_*$. If $f$ is of the form $\mathbb{P}^n_Y \longrightarrow Y$, which is smooth, then the proposition is a special case of the proof of \cite[Proposition 3.2.3]{florian+julien-2021}. If $f$ is a closed immersion, then the proposition follows from \cite[Lemma 3.2.2]{florian+julien-2021}. Consequently, the proposition holds true for every projective morphism. The general case is handled by noetherian induction and the Chow lemma. First we prove the following property of closed subschemes of $X$ holds true: \medskip \\ 
    \begin{itshape} 
    Let $t \colon T \longhookrightarrow X$ be a proper closed subscheme, if for any proper closed subscheme $z \colon Z \longhookrightarrow T$, $\breve{f}_*\breve{(t \circ z)}_*\breve{(t \circ z)}^*$ preserves quasi-unipotent motives, then $\breve{f}_*\breve{t}_*\breve{t}^*$ preserves quasi-unipotent motives. \medskip \\ 
    \end{itshape}
    Since $f \circ t$ is proper, by the Chow lemma, there exists a projective morphism $p \colon V \longrightarrow T$ such that $f \circ t \circ p$ is also projective and there exists an open dense subscheme $u \colon U \longhookrightarrow T$ of $T$ over which $p$ is an isomorphism. Let $z\colon Z \longhookrightarrow T$ be the (reduced) closed complement of $U$ in $T$. We have the distinguished triangle
    \begin{equation*}
        \breve{u}_{\#}\breve{u}^* \longrightarrow \id \longrightarrow \breve{z}_*\breve{z}^* \longrightarrow +1.
    \end{equation*}
    Apply $\breve{f}_*\breve{t}_*$ from the left and $\breve{t}^*$ from the right, we obtain a new distinguished triangle
    \begin{equation*}
        \breve{f}_*\breve{t}_*\breve{u}_{\#}\breve{u}^*\breve{t}^* \longrightarrow \breve{f}_*\breve{t}_*\breve{t}^* \longrightarrow \breve{f}_*\breve{t}_*\breve{z}_*\breve{z}^*\breve{t}^* \longrightarrow +1.
    \end{equation*}
    By the induction hypothesis, we know that $\breve{f}_*\breve{(t \circ z)}_*\breve{(t \circ z)}^*$ preserves quasi-unipotent motives so it suffices to prove that $\breve{f}_*\breve{t}_*\breve{u}_{\#}\breve{u}^*\breve{t}^*$ preserves quasi-unipotent motives. By \cite[Lemma 3.2.1]{florian+julien-2021}, we reduce to proving that $\breve{(f \circ t)}_*\breve{u}_{\#}$ preserves quasi-unipotent motives. We form the cartesian square
    \begin{equation*}
         \begin{tikzcd}[sep=large]
          p^{-1}(U) \arrow[r,"j"] \arrow[d,"q",swap] & V \arrow[d,"p"] \\ 
            U \arrow[r,"u"] & T.
        \end{tikzcd}
        \end{equation*}
        Since $q$ is an isomorphism, we deduce that $\breve{u}_{\#}= \breve{p}_*\breve{j}_{\#}\breve{q}^!$. Consequently, we obtain $\breve{(f \circ t)}_*u_{\#} = \breve{(f \circ t \circ p)}_*\breve{j}_{\#}\breve{q}^!$. By the projective case and \cite[Lemma 3.2.1]{florian+julien-2021} again, we reduce the problem to proving that $\breve{q}^!$ preserves quasi-unipotent motives, but $q$ is an isomorphism so we are done. We finish by noetherian induction. We define
        \begin{equation*}
            \mathscr{S} = \left \{t\colon T \longhookrightarrow X \mid T \ \text{nonempty, proper closed s.t} \ \breve{f}_*\breve{t}_*\breve{t}^*(\qush(X)) \not\subset \qush(Y) \right \}.  
        \end{equation*}
        If $\mathscr{S} \neq \varnothing$, then we can find an infinite descending chain in $\mathscr{S}$ as follows: for any $T \in \mathscr{S}$, there exists a proper closed subscheme $Z$ of $T$ such that $Z \in \mathscr{S}$. But this contradicts the noetherian property of $X$. Consequently, $\mathscr{S} = \varnothing$, which implies that $\breve{f}_!$ preserves quasi-unipotent motives. 
\end{proof}
\subsection{The monodromic nearby cycles functors at infinity} As a must-have consequence of the previous section, the \textit{monodromic nearby cycles functor} constructed in \cite{florian+julien-2021} has to admit its monodromic nearby cycles functors at infinity. Let $X$ be a $k$-variety and $f \colon X \longrightarrow \mathbb{A}_k^1$ be a morphism of $k$-varieties. We form the diagram
\begin{equation*} \label{monodromic nearby cycles}
    \begin{tikzcd}[sep=large]
      \gm_{m,X_{\eta}} \arrow[d,"f_{\eta}^{\gm_m}",swap] \arrow[r]  & \gm_{m,X} \arrow[d,"f^{\gm_m}"] & \gm_{m,X_{\sigma}} \arrow[l] \arrow[d,"f_{\sigma}^{\gm_m}"] \\ 
     \gm_{m,k} \arrow[r,"j"] & \mathbb{A}_k^1 & \mathrm{Spec}(k) \arrow[l,"i",swap]
    \end{tikzcd}
\end{equation*}
where we define $f^{\gm_m}\colon \Spec(\mathcal{O}_X[T,T^{-1}]) \longrightarrow \mathbb{A}^1_k = \Spec(k[t])$ to be the morphism given by $t \longmapsto fT$ once we identify $f$ with the image of $t$ under the canonical morphism $k[t] \longrightarrow \mathcal{O}_X(X)$. With all these data, the monodromic nearby cycles functor is defined as
\begin{equation*}
    \Psi_f^{\textnormal{mon}} \coloneqq \Psi_{f^{\gm_m}}  p^*\colon \sh(X_{\eta}) \longmapsto \sh(\gm_{m,X_{\sigma}}).
\end{equation*}
in which $p\colon \gm_{m,X_{\eta}} \longrightarrow X_{\eta}$ is the canonical projection. It was proved in \cite{florian+julien-2021} that $\Psi^{\textnormal{mon}}_f$ takes values in $\qush(X_{\sigma})$ (see \cite[Theorem 4.2.1]{florian+julien-2021}). We can recover $\Psi_f$ from $\Psi^{\textnormal{mon}}_f$ thanks to \cite[Theorem 4.1.1]{florian+julien-2021}. Here we reprove \cite[Theorem 4.1.1]{florian+julien-2021} by using the explicit definition of nearby cycles functors and therefore avoid using resolutions of singularities.
\begin{prop} \label{IS21, theorem 4.1.1}
    Let $X$ be a $k$-variety and $f \colon X \longrightarrow \mathbb{A}_k^1$ be a morphism of $k$-varieties, then there is a natural isomorphism
    \begin{equation*}
        1^*\Psi_f^{\textnormal{mon}} \overset{\sim}{\longrightarrow} \Psi_f.
    \end{equation*}
\end{prop}
\begin{proof}
By successively applying proposition \ref{proper base change for derivators} to cartesian diagrams
\begin{equation*}
    \begin{tikzcd}[sep=large]
        \gm_{m,X_{\sigma}} \arrow[d,"i_{f^{\gm_m}}",swap] & X_{\sigma} \arrow[l,"1",swap] \arrow[d,"i_f"] \\ 
        \gm_{m,X} & X \arrow[l,"1",swap] 
    \end{tikzcd} \ \ \ \ \ \ \ \ \ \ \ \ \ \ \  \begin{tikzcd}[row sep=large, column sep = huge]
       (\mathscr{R}_{\gm_{m,X}},\Delta \times \mathbb{N}^{\times}) \arrow[r,"j_{f^{\gm_m}} \circ \theta_{f^{\gm_m}}"] \arrow[d,"p",swap] & (\gm_{m,X},\Delta \times \mathbb{N}^{\times})  \arrow[d,"p"]  \\ 
       (\mathscr{R}_X,\Delta \times \mathbb{N}^{\times}) \arrow[r,"j_f \circ \theta_f"] & (X,\Delta \times \mathbb{N}^{\times}),
    \end{tikzcd}
\end{equation*}
we see that
    \begin{align*}
        1^*\Psi_{f^{\gm_m}}p^* & = {\color{blue}1^*(p_{\Delta \times \mathbb{N}^{\times}})_{\#}}(i_{f^{\gm_m}})^*(j_{f^{\gm_m}} \circ \theta_{f^{\gm_m}})_*(\theta_{f^{\gm_m}})^*(p_{\Delta \times \mathbb{N}^{\times}})^*p^* \\ 
        & \simeq (p_{\Delta \times \mathbb{N}^{\times}})_{\#}{\color{blue}1^*(i_{f^{\gm_m}})^*}(j_{f^{\gm_m}} \circ \theta_{f^{\gm_m}})_*(\theta_{f^{\gm_m}})^*(p_{\Delta \times \mathbb{N}^{\times}})^*p^* \\ 
        & = (p_{\Delta \times \mathbb{N}^{\times}})_{\#}(i_f)^* 1^*(j_{f^{\gm_m}} \circ \theta_{f^{\gm_m}})_*{\color{blue}(\theta_{f^{\gm_m}})^*(p_{\Delta \times \mathbb{N}^{\times}})^*p^*} \\ 
        & = (p_{\Delta \times \mathbb{N}^{\times}})_{\#}(i_f)^* 1^*{\color{blue}(j_{f^{\gm_m}} \circ \theta_{f^{\gm_m}})_*p^*}(\theta_f)^*(p_{\Delta \times \mathbb{N}^{\times}})^*\\
        & \simeq (p_{\Delta \times \mathbb{N}^{\times}})_{\#}(i_f)^*{\color{blue} 1^*p^*}(j_f \circ \theta_f)_*(\theta_f)^*(p_{\Delta \times \mathbb{N}^{\times}})^*\\
        & = \Psi_f
    \end{align*}
    as desired.
\end{proof}
\begin{lem} \label{proper base change theorem for monodromic nearby cycles functor} 
     Let $X,Y$ be $k$-varieties and $g\colon Y \longrightarrow X, f\colon X \longrightarrow \mathbb{A}_k^1$ be morphisms of $k$-varieties, then there exists a natural transformation 
      \begin{equation*}
         \begin{tikzcd}[sep=large]
         \sh(Y_{\eta}) \arrow[r,"\Psi^{\textnormal{mon}}_{f \circ g}"] \arrow[d,"g_{\eta!}",swap] & \qush(Y_{\sigma}) \arrow[Rightarrow, shorten >=30pt, shorten <=30pt, dl,"\mu_g^{\textnormal{mon}}"] \arrow[d,"\breve{g}_{\sigma!}"] \\ 
         \sh(X_{\eta}) \arrow[r,"\Psi^{\textnormal{mon}}_{f}",swap] &  \qush(X_{\sigma}).
    \end{tikzcd} 
    \end{equation*}
   which is an isomorphism if $g$ is proper. If $h \colon Z \longrightarrow Y$ is another morphism of $k$-varieties, then we have a commutative diagram 
   \begin{equation*}
       \begin{tikzcd}[sep=large]
               \breve{(g \circ h)}_{\sigma!}\Psi_{f \circ g \circ h}^{\textnormal{mon}}\arrow[d,"\mu_h^{\textnormal{mon}}",swap] \arrow[r,"\mu_{g \circ h}^{\textnormal{mon}}"] &  \Psi_{f}^{\textnormal{mon}}(g \circ h)_{\eta!}\arrow[d,"\sim"]    \\ 
              \breve{g}_{\sigma!}\Psi_{f \circ g}^{\textnormal{mon}}h_{\eta !} \arrow[r,"\mu_g^{\textnormal{mon}}"] &  \Psi_{f}^{\textnormal{mon}}(g \circ h)_{\eta !}.
       \end{tikzcd}
   \end{equation*}
\end{lem}
\begin{proof}
    Consider two diagrams below, we have an isomorphism in the left diagram because of the proper base change theorem
    \begin{equation*}
         \begin{tikzcd}[sep=large]
            \sh(Y_{\eta}) \arrow[r,"p^*"] \arrow[d,"g_{\eta!}",swap] & \sh(\gm_{m,Y_{\eta}}) \arrow[d,"\breve{g}_{\eta!}"] \arrow[Rightarrow, shorten >=29pt, shorten <=29pt, dl,"\Ex_!^*"]  \\
            \sh(X_{\eta}) \arrow[r,"p^*"] & \sh(\gm_{m,X_{\eta}})
        \end{tikzcd} \ \ \ \ \ \ \ \  \begin{tikzcd}[sep=large]
        \sh(\gm_{m,Y_{\eta}})\arrow[r,"\Psi_{(f\circ g)^{\gm_m}}"] \arrow[d,"\breve{g}_{\eta !}",swap] & \sh(\gm_{m,Y_{\sigma}}) \arrow[Rightarrow, shorten >=33pt, shorten <=33pt, dl,"\mu_{\breve{g}}"] \arrow[d,"\breve{g}_{\sigma!}"] \\ 
         \sh(\gm_{m,X_{\eta}})  \arrow[r,"\Psi_{f^{\gm_m}}"] &  \sh(\gm_{m,X_{\sigma}}).
    \end{tikzcd}
    \end{equation*}
    Since the compositions $\Psi_{f^{\gm_m}}p^*$ and $\Psi_{(f\circ g)^{\gm_m}}p^*$ take values in $\qush$, we can match these two diagrams to obtain the desired natural transformation. If furthermore, $g$ is proper then base change of the right diagram is an isomorphism by definition. The compatibility of $\mu_?^{\textnormal{mon}}$ with respect to composition follows from the compatibility of $\mu_?$ (see proposition \ref{compatibility of nearby cycles functors with exceptional direct images}) and of the base change structure $\Ex_!^*$. 
    \end{proof} 

    \begin{cor}
        Let $X,Y$ be $k$-varieties and $g \colon Y \longrightarrow X$, $f\colon X \longrightarrow \mathbb{A}_k^1$ be morphisms of $k$-varieties. There exists a triangulated functor 
        \begin{equation*}
            \Psi^{\textnormal{mon},\infty}_{f,g} \colon \sh(Y_{\eta}) \longrightarrow \qush(X_{\sigma})
        \end{equation*}
        fitting into a distinguished triangle
        \begin{equation*}
               \breve{g}_{\sigma!}\Psi_{f \circ g}^{\textnormal{mon}}\overset{\mu_g^{\textnormal{mon}}}{\longrightarrow}\Psi_{f}^{\textnormal{mon}}g_{\eta !} \longrightarrow \Psi^{\textnormal{mon},\infty}_{f,g} \longrightarrow +1.
        \end{equation*} 
        Moreover, given a compactification
    \begin{equation*}
           \begin{tikzcd}[sep=large]
        Y \arrow[r,"u"] \arrow[rd,"g",swap] & \overline{Y} \arrow[d,"\overline{g}"] \\ 
        & X.
    \end{tikzcd} 
    \end{equation*}
    of $g$ and let $v \colon F = (\overline{Y}\setminus Y)_{\mathrm{red}} \longhookrightarrow \overline{Y}$ be the (reduced) closed complement of $Y$. Let $h \colon (Z,E) \longrightarrow (\overline{Y},F)$ be a log-resolution of singularities and denote by $i$ the open immersion
    \begin{equation*}
        i \coloneqq h^{-1} \circ u  \colon Y \longhookrightarrow Z,
    \end{equation*}
    here we are making an abuse of notation by writing $h^{-1}$ for the inverse of the restriction of $h$ to $Z \setminus E$. Then there exists a natural isomorphism of triangulated functors 
   \begin{equation*}
        \breve{\overline{g}}_{\sigma*}\Psi^{\textnormal{mon},\infty}_{f \circ \overline{g},u} \simeq \breve{(\overline{g} \circ h)}_{\sigma *}\Psi^{\textnormal{mon},\infty}_{f \circ \overline{g} \circ h,i}.
    \end{equation*}
    \end{cor}
\begin{proof}
 We start with the distinguished triangle 
 \begin{equation*}
     \breve{g}_{\sigma!}\Psi_{(f \circ g)^{\gm_m}} \longrightarrow \Psi_{f^{\gm_m}}\breve{g}_{\eta !} \longrightarrow \Psi^{\infty}_{f^{\gm_m},\breve{g}} \longrightarrow + 1
 \end{equation*}
 We apply $p^*$ and naturally, we define $\Psi_{f,g}^{\textnormal{mon},\infty} \coloneqq \Psi^{\infty}_{f^{\gm_m},\breve{g}}p^* \colon \sh(Y_{\eta}) \longrightarrow \sh(\gm_{m,X_{\sigma}})$. The fact that the images of $\Psi^{\textnormal{mon},\infty}_{f,g}$ lie in $\qush(X_{\sigma})$ follows from proposition \ref{exceptional direct images preserve quasi-unipotent motives}. Theorem \ref{independency of nearby cycles functors at infinity} implies the second part. 
\end{proof}

The proposition below is an analogue at infinity of \cite[Theorem 4.1.1]{florian+julien-2021} and proposition \ref{IS21, theorem 4.1.1}. It tells us that we can recover nearby cycles functors at infinity from their monodromic counterparts. 
\begin{prop}
   Let $X,Y$ be $k$-varieties and $g \colon Y \longrightarrow X$, $f \colon X \longrightarrow \mathbb{A}_k^1$ be morphisms of $k$-varieties. There is a natural isomorphism 
    \begin{equation*}
          1^*\Psi^{\textnormal{mon},\infty}_{f,g} \overset{\sim}{\longrightarrow} \Psi^{\infty}_{f,g}.
    \end{equation*}
\end{prop}
\begin{proof}
    Let 
    \begin{equation*}
           \begin{tikzcd}[sep=large]
        Y \arrow[r,"u"] \arrow[rd,"g",swap] & \overline{Y} \arrow[d,"\overline{g}"] \\ 
        & X.
    \end{tikzcd} 
    \end{equation*} 
    be a compactification of $g$. Let $v \colon F \longhookrightarrow \overline{Y}$ the the (reduced) closed complement of $u$. There are cartesian squares
\begin{equation*}
\begin{tikzcd}[sep=large]
        \gm_{m,\overline{Y}_{\sigma}} \arrow[d,"\breve{\overline{g}}_{\sigma}",swap] & \overline{Y} \arrow[l,"1",swap] \arrow[d,"\overline{g}_{\sigma}"] \\ 
        \gm_{m,X_{\sigma}} & X_{\sigma}\arrow[l,"1",swap] 
    \end{tikzcd} \qquad \begin{tikzcd}[sep=large]
  \gm_{m,F} \arrow[d,"\breve{v}",swap]   & \gm_{m,F_{\sigma}} \arrow[d,"\breve{v}_{\sigma}"]  \arrow[l,swap,"i_{f^{\gm_m} \circ \breve{\overline{g}} \circ \breve{v}}"] &  F_{\sigma} \arrow[l,"1",swap] \arrow[d,"v_{\sigma}"] \\ 
   \gm_{m,\overline{Y}}  & \gm_{m,\overline{Y}_{\sigma}}  \arrow[l,swap,"i_{f^{\gm_m} \circ \breve{\overline{g}}}"] & \overline{Y}_{\sigma} \arrow[l,"1",swap] 
    \end{tikzcd} 
\end{equation*}
\begin{equation*}
\begin{tikzcd}[sep=large]
      (\gm_{m,Y_{\eta}},\Delta \times \mathbb{N}^{\times}) \arrow[r,"\breve{u}_{\eta}"] \arrow[d,"p",swap] & (\gm_{m,\overline{Y}},\Delta \times \mathbb{N}^{\times}) \arrow[d,"p"] \\ 
       (Y_{\eta},\Delta \times \mathbb{N}^{\times}) \arrow[r,"u_{\eta}"] & (\overline{Y}_{\eta},\Delta \times \mathbb{N}^{\times})
   \end{tikzcd} \qquad
     \begin{tikzcd}[row sep=large, column sep = huge]
       (\mathscr{R}_{\gm_{m,\overline{Y}}},\Delta \times \mathbb{N}^{\times}) \arrow[r,"j_{f^{\gm_m}\circ \breve{\overline{g}}} \circ \theta_{f^{\gm_m}\circ \breve{\overline{g}}}"] \arrow[d,"p",swap] & (\gm_{m,\overline{Y}_{\eta}},\Delta \times \mathbb{N}^{\times})  \arrow[d,"p"]  \\ 
       (\mathscr{R}_{\overline{Y}},\Delta \times \mathbb{N}^{\times}) \arrow[r,"j_{f \circ \overline{g}}\circ \theta_{f\circ \overline{g}}"] & (\overline{Y}_{\eta},\Delta \times \mathbb{N}^{\times})
    \end{tikzcd}   
\end{equation*}
\begin{equation*}
    \begin{tikzcd}[sep=large]
        \gm_{m,\overline{Y}} \arrow[d,"p",swap] & \gm_{m,\overline{Y}_{\sigma}} \arrow[d,"p"] \arrow[l,"i_{f^{\gm_m} \circ \breve{\overline{g}}}",swap] & \overline{Y}_{\sigma} \arrow[ld,equal] \arrow[l,"1",swap] \\ 
        \overline{Y} & \overline{Y}_{\sigma} \arrow[l,"i_{f \circ \overline{g}}",swap] & 
    \end{tikzcd} \qquad \begin{tikzcd}[sep=large]
    F_{\sigma} \arrow[d,"v_{\sigma}",swap] \arrow[r,"i_{f \circ \overline{g} \circ v}"] & F \arrow[d,"v"] \\ 
    \overline{Y}_{\sigma} \arrow[r,"i_{f \circ \overline{g}}"] & \overline{Y}.
\end{tikzcd}
\end{equation*}
By successively applying proposition \ref{proper base change for derivators} to these diagrams, we see that
     \begin{align*}
        1^*\Psi^{\infty}_{f^{\gm_m},\breve{g}}p^* &  = {\color{blue}1^*\breve{\overline{g}}_{\sigma*}}(p_{\Delta \times \mathbb{N}^{\times}})_{\#}(i_{f^{\gm_m} \circ \breve{\overline{g}}})^*\breve{v}_*\breve{v}^*(j_{f^{\gm_m} \circ \breve{\overline{g}}} \circ \theta_{f^{\gm_m} \circ \breve{\overline{g}}})_* \breve{u}_{\eta \#}(\theta_{f^{\gm_m}\circ \breve{g}})^*(p_{\Delta \times \mathbb{N}^{\times}})^*p^*  \\ 
        & \simeq \overline{g}_{\sigma*}{\color{blue}1^*(p_{\Delta \times \mathbb{N}^{\times}})_{\#}}(i_{f^{\gm_m} \circ \breve{\overline{g}}})^*\breve{v}_*\breve{v}^*(j_{f^{\gm_m} \circ \breve{\overline{g}}} \circ \theta_{f^{\gm_m} \circ \breve{\overline{g}}})_* \breve{u}_{\eta \#}(\theta_{f^{\gm_m}\circ \breve{g}})^*(p_{\Delta \times \mathbb{N}^{\times}})^*p^* \\ 
        & \simeq \overline{g}_{\sigma*}(p_{\Delta \times \mathbb{N}^{\times}})_{\#}{\color{blue}1^*(i_{f^{\gm_m} \circ \breve{\overline{g}}})^*\breve{v}_*}\breve{v}^*(j_{f^{\gm_m} \circ \breve{\overline{g}}} \circ \theta_{f^{\gm_m} \circ \breve{\overline{g}}})_* \breve{u}_{\eta \#}(\theta_{f^{\gm_m}\circ \breve{g}})^*(p_{\Delta \times \mathbb{N}^{\times}})^*p^* \\ 
        & \simeq \overline{g}_{\sigma*}(p_{\Delta \times \mathbb{N}^{\times}})_{\#}v_{\sigma*}{\color{blue}1^*(i_{f^{\gm_m} \circ \breve{\overline{g}} \circ \breve{v}})^*\breve{v}^*}(j_{f^{\gm_m} \circ \breve{\overline{g}}} \circ \theta_{f^{\gm_m} \circ \breve{\overline{g}}})_* \breve{u}_{\eta \#}(\theta_{f^{\gm_m}\circ \breve{g}})^*(p_{\Delta \times \mathbb{N}^{\times}})^*p^* \\ 
        & = \overline{g}_{\sigma*}(p_{\Delta \times \mathbb{N}^{\times}})_{\#}v_{\sigma*}v_{\sigma}^*1^*(i_{f^{\gm_m} \circ \breve{\overline{g}}})^*(j_{f^{\gm_m} \circ \breve{\overline{g}}} \circ \theta_{f^{\gm_m} \circ \breve{\overline{g}}})_* \breve{u}_{\eta \#}{\color{blue}(\theta_{f^{\gm_m}\circ \breve{g}})^*(p_{\Delta \times \mathbb{N}^{\times}})^*p^*} \\ 
        & \simeq \overline{g}_{\sigma*}(p_{\Delta \times \mathbb{N}^{\times}})_{\#}v_{\sigma*}v_{\sigma}^*1^*(i_{f^{\gm_m} \circ \breve{\overline{g}}})^*(j_{f^{\gm_m} \circ \breve{\overline{g}}} \circ \theta_{f^{\gm_m} \circ \breve{\overline{g}}})_* {\color{blue}\breve{u}_{\eta \#}p^*}(\theta_{f\circ g})^*(p_{\Delta \times \mathbb{N}^{\times}})^* \\ 
        & \simeq \overline{g}_{\sigma*}(p_{\Delta \times \mathbb{N}^{\times}})_{\#}v_{\sigma*}v_{\sigma}^*1^*(i_{f^{\gm_m} \circ \breve{\overline{g}}})^*{\color{blue}(j_{f^{\gm_m} \circ \breve{\overline{g}}} \circ \theta_{f^{\gm_m} \circ \breve{\overline{g}}})_* p^*}u_{\eta \#}(\theta_{f\circ g})^*(p_{\Delta \times \mathbb{N}^{\times}})^* \\ 
        & \simeq \overline{g}_{\sigma*}(p_{\Delta \times \mathbb{N}^{\times}})_{\#}v_{\sigma*}v_{\sigma}^*{\color{blue}1^*(i_{f^{\gm_m} \circ \breve{\overline{g}}})^*p^*}(j_{f \circ \overline{g}} \circ \theta_{f \circ \overline{g}})_* u_{\eta \#}(\theta_{f\circ g})^*(p_{\Delta \times \mathbb{N}^{\times}})^* \\
        & =  \overline{g}_{\sigma*}(p_{\Delta \times \mathbb{N}^{\times}})_{\#}v_{\sigma*}{\color{blue}v_{\sigma}^*(i_{f \circ \overline{g}})^*}(j_{f \circ \overline{g}} \circ \theta_{f \circ \overline{g}})_* u_{\eta \#}(\theta_{f\circ g})^*(p_{\Delta \times \mathbb{N}^{\times}})^*  \\ 
        & = \overline{g}_{\sigma*}(p_{\Delta \times \mathbb{N}^{\times}})_{\#}{\color{blue}v_{\sigma*}(i_{f \circ \overline{g} \circ v})^*}v^*(j_{f \circ \overline{g}} \circ \theta_{f \circ \overline{g}})_* u_{\eta \#}(\theta_{f\circ g})^*(p_{\Delta \times \mathbb{N}^{\times}})^*  \\ 
        & \simeq \overline{g}_{\sigma*}(p_{\Delta \times \mathbb{N}^{\times}})_{\#}(i_{f \circ \overline{g}})^*v_*v^*(j_{f \circ \overline{g}} \circ \theta_{f \circ \overline{g}})_* u_{\eta \#}(\theta_{f \circ g})^*(p_{\Delta \times \mathbb{N}^{\times}})^* \\
         & = \Psi^{\infty}_{f,g}
    \end{align*}
    as desired.
\end{proof}

\begin{rmk}
    For technical reasons, we define the \textit{global} versions of $\Psi^{\textnormal{mon}}_f$ and $\Psi^{\textnormal{mon},\infty}_{f,g}$ by
    \begin{align*}
        \Psi^{\textnormal{mon}}_f (j_f)^* \colon & \sh(X) \longrightarrow \qush(X_{\sigma}) \\ 
        \Psi^{\textnormal{mon},\infty}_{f,g}(j_{f \circ g})^* \colon & \sh(Y) \longrightarrow \qush(X_{\sigma}).
    \end{align*}
    Since $(j_f)_*,(j_{f \circ g})_*$ are fully faithful, we can recover the functors $\Psi^{\textnormal{mon}}_f$ and $\Psi^{\textnormal{mon},\infty}_{f,g}$ from the functors $\Psi^{\textnormal{mon}}_f(j_f)^*$ and $\Psi^{\textnormal{mon},\infty}_{f,g}(j_{f \circ g})^*$ and therefore, from now on, we use these global functors without changing the notation. 
\end{rmk}

\section{Motivic nearby cycles at infinity in the the virtual world}

\subsection{Grothendieck rings and nearby cycles}
Given a $k$-variety $X$, the group $K_0(\var_X)$ is generated by isomorphisms classes of morphisms of $X$-varieties $p: U \longrightarrow X$ modulo the usual cut-paste relation. We denote by $\mathscr{M}_X$ the localization $K_0(\var_X)[\mathbf{L}^{-1}]$ with $\mathbf{L}=[\mathbb{A}_X^1]$ being the class of the affine line, called \textit{Grothendieck ring of $X$-varieties}. For a $k$-morphism $f \colon X \longrightarrow Y$, there are a group morphism and a ring morphism 
\begin{align*}
    f_! \colon \mathscr{M}_X & \longrightarrow \mathscr{M}_Y \\ 
    f^* \colon \mathscr{M}_Y & \longrightarrow \mathscr{M}_X
\end{align*}
given by composing and pulling back, respectively. Follow \cite{florian+julien-2013}, we can construct an Euler characteristic ring morphism 
\begin{align*}
    \chi_X \colon \mathscr{M}_X & \longrightarrow K_0(\shct(X)) \\ 
    [U] & \longmapsto [p_!(\mathds{1}_U)]
\end{align*}
compatible with proper push forwards $f_!$ and inverse images $f^*$. Similar, one can construct the group $\mathscr{M}^{\gm_m}_{X \times_k \gm_{m,k}}$, the Grothendieck ring of $\gm_{m,X}$-varieties $p \colon U \longrightarrow \gm_{m,X}$ endowed with a diagonally monomial $\gm_{m}$-action of some weight together with an Euler characteristic ring morphism 
\begin{align*}
    \chi^{\gm_{m}}_{X,c}\colon \mathscr{M}_{X \times_k \gm_{m,k}}^{\gm_{m}}  & \longrightarrow K_0(\qushct(X)) \\ 
    [U] & \longmapsto [p_!(\mathds{1}_U)]
    \end{align*} 
compatible with proper push forwards and inverse images 
\begin{align*}
    \breve{f}_! \colon \mathscr{M}^{\gm_m}_{X\times_k \gm_{m,k}} & \longrightarrow \mathscr{M}^{\gm_m}_{Y \times_k \gm_{m,k}} \\ 
    \breve{f}^* \colon \mathscr{M}^{\gm_m}_{Y \times_k \gm_{m,k}} & \longrightarrow \mathscr{M}^{\gm_m}_{X\times_k \gm_{m,k}}.
\end{align*}
For details, we refer the reader to \cite{guibert+loeser+merle-2006} and \cite[Proposition 5.2.1]{florian+julien-2021}. We generally call elements of Grothendieck rings \textit{virtual motives} in contrast to non-virtual motives in $\sh$. In the virtual setting, we also have a theory of motivic nearby cycles. More concretely, let $f \colon X \longrightarrow \mathbb{A}_k^1$ be a $k$-morphism and $U \subset X$ be a dense open subscheme of $X$ and $F = X \setminus U$ its closed complement endowed with the reduced structure. In \cite[Theorem 3.9]{guibert+loeser+merle-2006}, the authors construct a nearby cycles motive $\psi_{f,U}$, defined as certain limit of motivic zeta functions. The case $U = X$ was studied in various papers, see for instance, \cite{denef+loeser-1998}\cite{denef+loeser-1999}\cite{denef+loeser-2002}\cite{loeser-2000}\cite{loeser-2009}. We write $\psi_f$ in case $U = X$. There is a morphism 
\begin{equation*}
        \psi_f\colon \Mscr_X \longrightarrow \mathscr{M}_{X_{\sigma} \times_k \gm_m}^{\gm_{m,k}}
    \end{equation*}
    such that, for every proper morphism $g\colon Z \longrightarrow X$ with $Z$ smooth over $k$ and every dense open subset $U$ in $Z$, 
    \begin{equation*}
        \psi_f([U \longhookrightarrow Z \overset{g}{\longrightarrow} X]) = \breve{g}_{\sigma!}(\psi_{f \circ g,U}).
    \end{equation*}
where $\psi_{f \circ g,U}$ is the virtual nearby cycles, defined as certain limit of motivic zeta functions. 
\subsection{Some results on compatibility} In this section, we prove several results on compatibility of motivic nearby cycles morphisms and motivic nearby cycles functors. We also prove that motivic nearby cycles morphisms carry two "base change structures" with respect to the inverse image $g^*$ and the exceptional direct image $g_!$. These base change morphisms are isomorphisms if $g$ is respectively smooth and proper. That being said, nearby cycles morphisms share a lot of similarities with nearby cycles functors. In fact, proposition \ref{prop: motivic incarnation} shows that nearby cycles morphisms are non-virtual incarnations of nearby cycles functors. 

\begin{lem} \label{nearby cycles morphism commutes with proper push forward}
     Let $X,Y$ be $k$-varieties and $g\colon Y \longrightarrow X, f\colon X \longrightarrow \mathbb{A}_k^1$ be morphisms of $k$-varieties. If $g$ is proper, then we have a commutative diagram 
       \begin{equation*}
        \begin{tikzcd}[sep=large]
          \mathscr{M}_{Y} \arrow[r,"\psi_{f \circ g}"] \arrow[d,"g_!",swap] &   \mathscr{M}_{Y_{\sigma} \times_k \gm_{m,k}}^{\gm_{m}} \arrow[d,"\breve{g}_{\sigma!}"] \\ 
         \mathscr{M}_{X}   \arrow[r,"\psi_{f}"]&  \mathscr{M}_{X_{\sigma}\times_k \gm_{m,k}}^{\gm_{m}}.
        \end{tikzcd}
    \end{equation*}
\end{lem}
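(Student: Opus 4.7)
The plan is to verify the identity on a convenient set of generators and then extend by linearity. Recall that in characteristic zero, resolution of singularities together with the scissor relation in $K_0(\var_Y)$ implies that $\mathscr{M}_Y$ is generated as an abelian group by classes of the form $[U \hookrightarrow Z \overset{p}{\longrightarrow} Y]$, where $p \colon Z \longrightarrow Y$ is proper, $Z$ is smooth over $k$, and $U$ is a dense open subscheme of $Z$. Indeed, starting from any $[T \longrightarrow Y]$ one chooses a compactification and then a log-resolution of the compactified variety, then expresses $[T]$ via the scissor relation applied to the closed complement, and proceeds by noetherian induction on the dimension of the support. So it suffices to check the claimed commutativity on classes of the above shape.

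Fix such a generator $\alpha = [U \hookrightarrow Z \overset{p}{\longrightarrow} Y]$. On the one hand,
\begin{equation*}
g_!(\alpha) = [U \hookrightarrow Z \overset{g \circ p}{\longrightarrow} X],
\end{equation*}
and since $g$ is proper by hypothesis, $g \circ p$ is also proper, so the characterization of $\psi_f$ from theorem \ref{GLM-nearby cycles morphism} gives
\begin{equation*}
\psi_f(g_!(\alpha)) = \breve{(g \circ p)}_{\sigma!}\bigl(\psi_{f \circ g \circ p, U}\bigr).
\end{equation*}
On the other hand, applying the same characterization (now for the function $f \circ g$ on $Y$) to the morphism $p$,
\begin{equation*}
\psi_{f \circ g}(\alpha) = \breve{p}_{\sigma!}\bigl(\psi_{f \circ g \circ p, U}\bigr),
\end{equation*}
and composing with $\breve{g}_{\sigma!}$ yields
\begin{equation*}
\breve{g}_{\sigma!}\bigl(\psi_{f \circ g}(\alpha)\bigr) = \breve{g}_{\sigma!} \breve{p}_{\sigma!}\bigl(\psi_{f \circ g \circ p, U}\bigr) = \breve{(g \circ p)}_{\sigma!}\bigl(\psi_{f \circ g \circ p, U}\bigr),
\end{equation*}
where the last equality uses functoriality of the exceptional direct image on the equivariant Grothendieck rings. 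This matches $\psi_f(g_!(\alpha))$, and extending by linearity finishes the proof.

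The only nontrivial point is the generation statement, which is standard in characteristic zero: it is used implicitly in the very definition of $\psi_f$ via theorem \ref{GLM-nearby cycles morphism} (the well-definedness there requires precisely this kind of presentation of $\mathscr{M}_Y$), so we do not rework it here. No compatibility of $\psi_?$ with non-proper pushforwards or with smooth pullbacks is needed for this lemma, which is why properness of $g$ enters in exactly the right place: it is what guarantees that $g \circ p$ remains a proper morphism over $X$, and hence that the defining formula for $\psi_f$ applies directly to $g_!(\alpha)$.
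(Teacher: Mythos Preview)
Your proof is correct and follows essentially the same route as the paper's: both check the identity on a generating family of $\mathscr{M}_Y$ consisting of classes coming from proper morphisms with smooth source, apply the defining property of $\psi_f$ from theorem \ref{GLM-nearby cycles morphism}, and conclude by functoriality of $(\ )_!$. The only cosmetic difference is that the paper invokes Bittner's presentation (classes $[h\colon Z \to Y]$ with $Z$ smooth and $h$ proper, i.e.\ $U=Z$), whereas you work with the slightly larger family $[U \hookrightarrow Z \to Y]$ that matches the statement of theorem \ref{GLM-nearby cycles morphism} verbatim.
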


\begin{proof}
    Note that the Grothendieck ring of varieties $\mathscr{M}_X$ has the following generating class (see \cite[Theorem 5.1]{bittner-2004}): the classes of proper morphisms $h\colon Z \longrightarrow Y$ with $Z$ smooth over $k$ generate the ring $\Mscr_Y$. As a consequence, we may work from the beginning with such classes. We see that
    \begin{align*}
        \breve{g}_{\sigma!}\psi_{f \circ g}([h\colon Z \longrightarrow Y]) & = \breve{g}_{\sigma!}\breve{h}_{\sigma!}(\psi_{f \circ g \circ h,Z}) \\ 
        & = \breve{(g \circ h)}_{\sigma!}(\psi_{f \circ g \circ h,Z}) \\ & = \psi_f([g \circ h\colon Z \longrightarrow X]) \\ 
        & = \psi_f\big( g_!([h \colon Z \longrightarrow Y]) \big)
    \end{align*}
    as desired.
\end{proof}

The following lemma is known as the smooth base change for nearby cycles morphism.
\begin{lem} \label{smooth base change for nearby cycles morphism}
     Let $X,Y$ be $k$-varieties and $g\colon Y \longrightarrow X, f\colon X \longrightarrow \mathbb{A}_k^1$ be morphisms of $k$-varieties. If $g$ is smooth, then we have a commutative diagram 
       \begin{equation*}
        \begin{tikzcd}[sep=large]
          \mathscr{M}_{X} \arrow[r,"\psi_f"] \arrow[d,"g^*",swap] &   \mathscr{M}_{X_{\sigma} \times_k \gm_{m,k}}^{\gm_{m}} \arrow[d,"\breve{g}_{\sigma}^*"] \\ 
         \mathscr{M}_{Y}   \arrow[r,"\psi_{f \circ g}"]&  \mathscr{M}_{Y_{\sigma}\times_k \gm_{m,k}}^{\gm_{m}}.
        \end{tikzcd}
    \end{equation*}
\end{lem}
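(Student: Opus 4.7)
The plan is to follow the same strategy as the proof of Lemma \ref{nearby cycles morphism commutes with proper push forward}, reducing via Bittner's presentation to a direct computation on arc spaces.

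First I would use the fact (due to Bittner, \cite{Bittner-Grothendieck-ring}) that $\mathscr{M}_X$ is generated by classes of the form $[U \hookrightarrow Z \xrightarrow{h} X]$ with $Z$ smooth over $k$, $h$ proper, and $U$ a dense open subscheme of $Z$, so that it suffices to check the identity on such generators. Forming the cartesian square
\begin{equation*}
\begin{tikzcd}[sep=large]
Z' \arrow[r,"h'"] \arrow[d,"g'",swap] & Y \arrow[d,"g"] \\
Z \arrow[r,"h"] & X,
\end{tikzcd}
\end{equation*}
I observe that $Z'$ is smooth over $k$ (since $g$, hence $g'$, is smooth), $h'$ is proper, and $U' \coloneqq (g')^{-1}(U)$ is a dense open of $Z'$. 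Applying Theorem \ref{GLM-nearby cycles morphism} to both sides of the desired identity, together with the obvious base-change identity $\breve{g}^*_\sigma \breve{h}_{\sigma !} = \breve{h'}_{\sigma !} (\breve{g'})^*_\sigma$ in equivariant Grothendieck rings (which holds because $Z_\sigma \times_{X_\sigma} Y_\sigma = Z'_\sigma$ as schemes), the problem boils down to showing
\begin{equation*}
(\breve{g'})^*_\sigma \, \psi_{f \circ h, U} = \psi_{f \circ h \circ g', U'}.
\end{equation*}
Relabelling, I may assume from the start that $X$ and $Y$ are smooth and $U' = g^{-1}(U)$, and prove that $\breve{g}^*_\sigma \psi_{f, U} = \psi_{f \circ g, U'}$.

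Next I would verify this smooth case directly from the arc-space definition of $\psi_{?,?}$. Passing to connected components if necessary, let $c$ denote the relative dimension of $g$. The classical fact that smooth morphisms induce Zariski-locally trivial bundles on jet schemes yields, for every $m \geq 0$, an $\mathbb{A}^{mc}$-bundle structure
\begin{equation*}
\mathscr{L}_m(Y) \longrightarrow \mathscr{L}_m(X) \times_X Y, \qquad \psi \longmapsto (g \circ \psi, \psi(0)).
\end{equation*}
This morphism is compatible with the order and angular component of $f \circ g$ (because $(f \circ g)(\psi) = f(g \circ \psi)$) and with the contact order against $Y \setminus U' = g^{-1}(X \setminus U)$ (smoothness of $g$ ensures that this preimage is already reduced and that its ideal sheaf is pulled back from that of $X \setminus U$). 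Restricting, the induced morphism $Z^{n\gamma}_n(f \circ g, U') \to Z^{n\gamma}_n(f, U) \times_{X_\sigma} Y_\sigma$ is a $\gm_{m,k}$-equivariant $\mathbb{A}^{n\gamma c}$-bundle, which gives in $\mathscr{M}^{\gm_m}_{Y_\sigma \times_k \gm_{m,k}}$ the identity
\begin{equation*}
[Z^{n\gamma}_n(f \circ g, U')] = \breve{g}^*_\sigma [Z^{n\gamma}_n(f, U)] \cdot \mathbf{L}^{n\gamma c}.
\end{equation*}
Since $\dim Y = \dim X + c$, this extra factor $\mathbf{L}^{n\gamma c}$ is exactly absorbed by the different normalizing powers of $\mathbf{L}$ used in the definition of $Z^\gamma_{f \circ g, U'}(T)$ and $Z^\gamma_{f, U}(T)$, so that $Z^\gamma_{f \circ g, U'}(T) = \breve{g}^*_\sigma Z^\gamma_{f, U}(T)$ coefficient by coefficient. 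The desired equality follows upon taking the limit $T \to \infty$.

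The main obstacle is the arc-space computation in the smooth case: one must verify carefully that the $\mathbb{A}^{n\gamma c}$-bundle structure on $\mathscr{L}_{n\gamma}(Y) \to \mathscr{L}_{n\gamma}(X) \times_X Y$ is compatible with the stratification by order on $f$ and with the $\gm_{m,k}$-action arising from the angular component, so that the bundle identity lifts from the plain Grothendieck ring to the monodromic equivariant one. Once this compatibility, together with the reducedness of $g^{-1}(X \setminus U)$, is in place, everything else is bookkeeping of dimensions, and the limit $T \to \infty$ can be taken on both sides because the pullback $\breve{g}^*_\sigma$ is a ring morphism that preserves rational series and commutes with the limit operation.
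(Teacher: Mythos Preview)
Your proposal is correct and spells out exactly the argument the paper defers to: the paper's proof is a one-line reference to Bittner's \emph{List of Properties 8.4}, whose content is precisely the arc-space computation you outline (reduction via Bittner's generators, cartesian base change to reduce to the case $X,Y$ smooth, and then the $\mathbb{A}^{n\gamma c}$-bundle identity on jet schemes absorbing the dimension shift). So your approach and the paper's are essentially the same.
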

\begin{proof}
     The proof is identical with the proof of  \cite[List of Properties 8.4]{bittner-2005} though the definition of equivariant Grothendieck rings in \cite{bittner-2005} is slightly different from the ones that we are considering here.
\end{proof}
We are now going to prove that this nearby cycles morphism $\psi_f$ is compatible with the monodromic nearby cycles functor considered in \cite{florian+julien-2021}. The following is a motivic analogue of \cite[Proposition 3.17]{guibert+loeser+merle-2006}.

\begin{prop} \label{prop: motivic incarnation}
Let $X$ be a $k$-variety and $f\colon X \longrightarrow \Abb_k^1$ be a morphism of $k$-varieties, then the diagram 
    \begin{equation*}
        \begin{tikzcd}[sep=large]
            \Mscr_X \arrow[r,"\psi_f"] \arrow[d,"\chi_{X,c}",swap]& \Mscr_{X_{\sigma} \times_k \gm_{m,k}}^{\gm_{m}} \arrow[d,"\chi^{\gm_{m}}_{X_{\sigma},c}"] \\ 
            K_0(\shct(X)) \arrow[r,"\Psi^{\textnormal{mon}}_f"] & K_0(\qushct(X_{\sigma}))
        \end{tikzcd}
    \end{equation*}
    is commutative. 
\end{prop}

\begin{proof}
     Like in lemma \ref{nearby cycles morphism commutes with proper push forward}, we may work from the beginning with the classes of proper morphisms $g\colon Y \longrightarrow X$ with $Y$ smooth over $k$. Apply lemma \ref{proper base change theorem for monodromic nearby cycles functor} (note that $g$ is proper), we have an isomorphism 
    \begin{equation*}
        \begin{tikzcd}[sep=large]
            \sh(Y) \arrow[r,"\Psi^{\textnormal{mon}}_{f \circ g}"] \arrow[d,"g_!",swap] & \qush(Y_{\sigma}) \arrow[Rightarrow, shorten >=30pt, shorten <=30pt, dl,"\mu_g^{\textnormal{mon}}"] \arrow[d,"\breve{g}_{\sigma!}"] \\ 
         \sh(X) \arrow[r,"\Psi^{\textnormal{mon}}_{f}"] &  \qush(X_{\sigma}).
        \end{tikzcd}
    \end{equation*}
    Now it is a consequence of \cite[Theorem 1.2]{florian+julien-2013}\cite[Theorem 5.3.1]{florian+julien-2021} (see also \cite[Corollary 8.7]{ayoub+florian+julien-2017}) that $ \Psi_{f \circ g}^{\textnormal{mon}}(\mathds{1}_Y) = \chi_{Y_{\sigma},c}^{\gm_{m}}(\psi_{f \circ g})$. We see that
    \begin{align*}
        (\Psi^{\textnormal{mon}}_f \chi_{X,c})([g \colon Y \longrightarrow X]) & = \Psi^{\textnormal{mon}}_f  (g_!\mathds{1}_Y) =  \breve{g}_{\sigma!} \Psi^{\textnormal{mon}}_{f \circ g}(\mathds{1}_Y) \\
        & = \breve{g}_{\sigma!} \chi_{Y_{\sigma},c}^{\gm_{m}}(\psi_{f \circ g})  = \chi_{X_{\sigma},c}^{\gm_{m}}\breve{g}_{\sigma!}(\psi_{f \circ g}) \\ 
        & = \chi_{X_{\sigma},c}^{\gm_{m}}\psi_f([g \colon Y \longrightarrow X])
    \end{align*}
    as desired. 
\end{proof}

Combine the propositions above, we obtain the following, which is a motivic analogue of \cite[Théorème 2.9]{raibaut-2012}. In fact, if $k$ is embedded in $\mathbb{C}$, then under the Betti realization (see \cite{ayoub-2010}), the following result becomes \cite[Théorème 2.9]{raibaut-2012}.
\begin{cor} \label{cor: raibaut's theorem 2.9 analogue} 
  Let $X$ be a $k$-variety and $f\colon X \longrightarrow \mathbb{A}^1_k$ be a morphism of $k$-varieties, then the diagram
\begin{equation*}
    \begin{tikzcd}[sep=large]
        \mathscr{M}_{X } \arrow[r,"\psi_{f}"] \arrow[d,"\chi_{X,c}^{\gm_m}",swap] & \mathscr{M}_{X_{\sigma} \times_k \gm_{m,k}}^{\gm_m} \arrow[r,"\breve{f}_{\sigma!}"] \arrow[d,"\chi_{X_{\sigma},c}^{\gm_m}"] & \mathscr{M}_{\gm_{m,k}}^{\gm_m} \arrow[d,"\chi_{k,c}^{\gm_m}"] \\
        K_0(\qushct(X)) \arrow[r,"\Psi_{f}^{\textnormal{mon}}"] & K_0(\qushct(X_{\sigma})) \arrow[r,"\breve{f}_{\sigma!}"] & K_0(\qushct(\sigma)).
    \end{tikzcd}
\end{equation*}
is commutative.
\end{cor}
\begin{proof}
    This is a direct consequence of proposition \ref{prop: motivic incarnation} and functoriality, as noted in the preceding section.
\end{proof}

\subsection{Nearby cycles at infinity as a triangulated functor} Follow \cite{raibaut-2012}\cite{raibaut+fantini-2020}, we recall the following definition: let $U$ be a smooth $k$-variety and $f \colon U \longrightarrow \mathbb{A}^1_k$ be a morphism of $k$-varieties. Let $(X,u,\overline{f})$ be a compactification of $f$, i.e. a diagram
\begin{equation*}
    \begin{tikzcd}[sep=large]
        U \arrow[r,"u",hook] \arrow[dr,"f",swap] & X \arrow[d,"\overline{f}"] \\ 
        & \Abb_k^1 
    \end{tikzcd}
\end{equation*}
in which $u$ is an open dominant immersion and $\overline{f}$ is a proper morphism of $k$-varieties. We identify $f$ and $\overline{f}$ with the images of $t$ under the canonical morphisms $k[t] \longrightarrow \mathcal{O}_U(U)$ and $k[t] \longrightarrow \mathcal{O}_X(X)$ of $k$-algebras induced by $f$ and $\overline{f}$, respectively. For any rational point $a \in \mathbb{A}^1_k(k)$, we denote by $f-a\colon U \longrightarrow \mathbb{A}_k^1$ and $\overline{f}-a\colon U \longrightarrow \mathbb{A}_k^1$ the induced morphisms by $f$ and $\overline{f}$, respectively. The difference 
\begin{equation*}
     \psi^{\infty}_{f,a} \coloneqq \breve{(\overline{f}-a)}_{\sigma!}(\psi_{\overline{f}-a,U}) - \breve{(f-a)}_{\sigma!}(\psi_{f-a}) \in \Mscr_{a \times_k \gm_{m,k}}^{\gm_m}.
\end{equation*}
is called the \textit{nearby cycles at infinity for the value $a$}, which does not depend on the choice of the compactification $(X,\hat{f},u)$ thanks to \cite[Théorème 4.2 and Corollaire 4.4]{raibaut-2012}. By lemma \ref{nearby cycles morphism commutes with proper push forward}, we reprove the independency of $\psi^{\infty}_{f,a}$ avoiding using \cite[Théorème 2.8]{raibaut-2012}. 
\begin{lem}
    Keep the notation, we have an equality 
    \begin{equation*}
        \psi^{\infty}_{f,a} = \psi_{\id}((f-a)_![U]) - \breve{(f-a)}_{\sigma!}\psi_{f-a}([U]).
    \end{equation*}
    in $\Mscr^{\gm_m}_{a \times_k \gm_{m,k}}$, where $[U]$ is the class of $\id \colon U \longrightarrow U$ in the ring $\Mscr_U$. 
\end{lem}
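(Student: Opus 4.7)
The plan is to unwind the definition of $\psi^{\infty}_{f,a}$ and match it term by term with the right-hand side of the claimed equality, using the two compatibility results already established: Theorem \ref{GLM-nearby cycles morphism} (which computes $\psi_f$ on classes of the form $[U \hookrightarrow Z \xrightarrow{g} X]$ with $g$ proper and $Z$ smooth) and Lemma \ref{nearby cycles morphism commutes with proper push forward} (proper pushforward compatibility of $\psi_?$).

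First I would handle the easy half. The second term $\breve{(f-a)}_{\sigma!}\psi_{f-a}$ appearing in the definition of $\psi^{\infty}_{f,a}$ already matches the second term of the claim, since $[U] \in \mathscr{M}_U$ is the class of the identity, so by Theorem \ref{GLM-nearby cycles morphism} applied with $g = \id_U$ and the dense open $U \subset U$ we have $\psi_{f-a}([U]) = \psi_{f-a,U}$, which is abbreviated as $\psi_{f-a}$ in the notation of the paper.

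Next I would reinterpret $\psi_{\id}((f-a)_![U])$. Factor $f-a = (\overline{f}-a) \circ u$; then functoriality of the exceptional direct image gives $(f-a)_![U] = (\overline{f}-a)_! u_![U] = (\overline{f}-a)_![u\colon U \hookrightarrow X]$. Since $(X,u,\overline{f})$ is a compactification, the morphism $\overline{f}-a \colon X \to \mathbb{A}^1_k$ is proper, so Lemma \ref{nearby cycles morphism commutes with proper push forward} yields
\begin{equation*}
\psi_{\id}\bigl((\overline{f}-a)_![U \hookrightarrow X]\bigr) = \breve{(\overline{f}-a)}_{\sigma!}\,\psi_{\overline{f}-a}\bigl([U \hookrightarrow X]\bigr).
\end{equation*}
Finally, to compute $\psi_{\overline{f}-a}([U \hookrightarrow X])$, I apply Theorem \ref{GLM-nearby cycles morphism}: since $U$ is smooth and dense open in $X$, taking $g = \id_X$ (after, if necessary, replacing $X$ by a smooth compactification which exists because $U$ is smooth, or equivalently using a log-resolution absorbed into the very definition of $\psi_{\overline{f}-a,U}$), we obtain $\psi_{\overline{f}-a}([U \hookrightarrow X]) = \psi_{\overline{f}-a,U}$. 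Combining the two displays produces $\psi_{\id}((f-a)_![U]) = \breve{(\overline{f}-a)}_{\sigma!}\psi_{\overline{f}-a,U}$, which is precisely the first term in the definition of $\psi^{\infty}_{f,a}$. Subtracting the second term gives the claimed identity.

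There is essentially no conceptual obstacle here; the argument is a two-line diagram chase once the correct factorization $f-a = (\overline{f}-a) \circ u$ is written down. The only mildly delicate point is ensuring that one may apply Theorem \ref{GLM-nearby cycles morphism} in the last step, which requires smoothness of the ambient compactification; this is costless since the definition of $\psi_{\overline{f}-a,U}$ via log-resolutions already permits us to assume $X$ smooth without loss of generality.
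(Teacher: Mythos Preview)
Your proposal is correct and follows exactly the approach the paper intends: the paper gives no explicit proof of this lemma, merely noting before its statement that Lemma~\ref{nearby cycles morphism commutes with proper push forward} yields the independence of $\psi^{\infty}_{f,a}$ from the compactification, and your argument spells this out via Theorem~\ref{GLM-nearby cycles morphism} and Lemma~\ref{nearby cycles morphism commutes with proper push forward} in the expected way. Your remark that one may assume $X$ smooth (by resolution of singularities, since $U$ is smooth and $k$ has characteristic zero) is exactly the harmless reduction needed to apply Theorem~\ref{GLM-nearby cycles morphism} with $g=\id_X$ in the last step.
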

The following serves as a version at infinity of \cite[Theorem 1.2]{florian+julien-2013}\cite[Theorem 5.3.1]{florian+julien-2021}.
\begin{prop} \label{prop: compatibility of nearby functors with nearby cycles}
Let $U$ be a smooth $k$-variety and $f \colon U \longrightarrow \Abb_k^1$ be a morphism of $k$-varieties. Let $a \in \mathbb{A}_k^1(k)$ be a rational point. There is an equality
\begin{equation*}
[\Psi^{\textnormal{mon},\infty}_{\id,f-a}(\mathds{1}_U)] = \chi^{\gm_{m}}_{k,c}(\psi^{\infty}_{f,a})
\end{equation*}
in $K_0(\qushct(k))$. In particular, taking the inverse image of the unit section, there is an equality
\begin{equation*}
[\Psi^{\infty}_{\id,f-a}(\mathds{1}_U)] = \chi_{k,c}(\psi^{\infty}_{f,a})
\end{equation*}
in $K_0(\shct(k))$. 
\end{prop}

\begin{proof}
    To simplify our notation and without loss of generality, we may assume that $a=0$. By the existence of the monodromic nearby cycles functor $\Psi^{\textnormal{mon},\infty}_{\id,f}$, we have the following distinguished triangle
    \begin{equation*}
        \breve{f}_{\sigma!}\Psi_{f}^{\textnormal{mon}}\longrightarrow  \Psi_{\id}^{\textnormal{mon}}f_! \longrightarrow \Psi^{\textnormal{mon},\infty}_{\id,f} \longrightarrow +1.
    \end{equation*}
    At the level of the Grothendieck rings, it results in 
    \begin{equation*}
         [\Psi^{\textnormal{mon},\infty}_{\id,f}(\mathds{1}_U) ]=  [\Psi_{\id}^{\textnormal{mon}}f_!(\mathds{1}_U)] - [\breve{f}_{\sigma!}\Psi_{f}^{\textnormal{mon}}(\mathds{1}_U)].
    \end{equation*}
    We calculate each term by functoriality and proposition \ref{prop: motivic incarnation}. 
    \begin{equation*}
        [\Psi_{\id}^{\textnormal{mon}} f_!(\mathds{1}_U)] = [\Psi_{\id}^{\textnormal{mon}}\chi_{\mathbb{A}^1_k,c}([f \colon U \longrightarrow \mathbb{A}_k^1]) ]= \chi_{k,c}^{\gm_m}\psi_{\id}(f_![U]).
    \end{equation*}
    Similarly, 
    \begin{equation*}
        [\breve{f}_{\sigma!}\Psi_{f}^{\textnormal{mon}}(\mathds{1}_U)] =  [\breve{f}_{\sigma!}\chi^{\gm_m}_{U_{\sigma},c}(\psi_f[U])] =  \chi^{\gm_m}_{k,c}[\breve{f}_{\sigma!}(\psi_f[U])].
    \end{equation*}
    The desired equality is obtained by additivity. 
\end{proof}
The following corollary is an analogue of \cite[Theorem 3.3]{raibaut+fantini-2020} in the world of motives. 
\begin{cor} \label{cor: analogue of raibaut+fantini-2020, theorem 3.3}
    Let $f\colon U = \mathbb{A}^n_k \longrightarrow \mathbb{A}_k^1$ be a morphism and $a \in \mathbb{A}^1_k(k)$ be a rational point. Suppose that $a$ does not lie in the discriminant of $f$. Then there is a distinguished triangle 
    \begin{equation*}
        \breve{(f-a)}_{\sigma!}(\mathds{1}_{U_a}) \longrightarrow \Psi^{\textnormal{mon}}_{\id}(f-a)_!(\mathds{1}_U) \longrightarrow \Psi^{\textnormal{mon},\infty}_{\id,f-a}(\mathds{1}_U) \longrightarrow +1
    \end{equation*}
    in $\qushct(k)$. In particular, 
    \begin{equation*}
       [\Psi^{\textnormal{mon},\infty}_{\id,f-a}(\mathds{1}_U)] = [\Psi^{\textnormal{mon}}_{\id}(f-a)_!(\mathds{1}_U)] - [\breve{(f-a)}_{\sigma!}(\mathds{1}_{U_a})]  
    \end{equation*}
    in $K_0(\qushct(k))$.
\end{cor}
\begin{proof}
    We may suppose that $a=0$. It is sufficient to prove that $\Psi_f(\mathds{1}_{U_{\eta}}) \simeq \mathds{1}_{U_{\sigma}}$. This is precisely \cite[Proposition 3.4]{ayoub+florian+julien-2017} (see also \cite[Théorème 10.6]{ayoub-2014}).
\end{proof}

\section{Motivic bifurcation sets} 
In this section, we assume that $k$ is an algebraically closed field of characteristic zero. We consider three bifurcation sets 
\begin{align*}
    B^{\textnormal{Rai}}_f &  = \left \{a \in \mathbb{A}^1_k(k) \mid \psi^{\infty}_{f,a} \neq 0 \in \mathscr{M}_k \right \} \cup \mathrm{disc}(f) \\ 
       B^{\textnormal{coh}}_f &  = \left \{a \in \mathbb{A}^1_k(k) \mid \Psi^{\infty}_{\id,f-a}(\mathds{1}) \not\simeq 0  \ \textnormal{in} \ \sh(k)\right \} \cup \mathrm{disc}(f) \\ 
          B^{\textnormal{Eu}}_f &  = \left \{a \in \mathbb{A}^1_k(k) \mid [\Psi^{\infty}_{\id,f-a}(\mathds{1})] \neq 0 \ \textnormal{in} \ K_0(\sh(k)) \right \} \cup \mathrm{disc}(f) 
\end{align*}
where $\mathrm{disc}(f)$ denotes the discriminant of $f$. We emphasize that $B^{\textnormal{Rai}}_f, B^{\textnormal{Eu}}_f$ measure the Euler characteristics of nearby cycles at infinity while $B^{\textnormal{coh}}_f$ measures cohomology of nearby cycles at infinity. The Raibaut bifurcation set $B^{\textnormal{Rai}}_f$ is finite according to \cite[Théorème 4.13]{raibaut-2012} and by the realization in proposition \ref{prop: compatibility of nearby functors with nearby cycles} we know that $B^{\textnormal{Eu}}_f \subset B^{\textnormal{Rai}}_f \cap B^{\textnormal{coh}}_f$, which implies that $B^{\textnormal{Eu}}_f$ is finite as well. The following question is natural \medskip \\ 
\textbf{Question}. Is $B^{\textnormal{coh}}_f$ finite for any $f$? In other words, does nearby cycles at infinity vanish comologically outsite a finite set? \medskip \\ 
In this section, we show that the question above has positive answer for Laurent polynomials non-degenerate with respect to their Newton polyhedra. Moreover, we show that $B^{\textnormal{Rai}}_f = B^{\textnormal{coh}}_f = B^{\textnormal{Eu}}_f = \left \{0 \right \}$ with $f$ a monomial of the form $x_1\cdots x_n$. Since $B^{\textnormal{Rai}}_f = \left \{0 \right \}$ by \cite[Proposition 4.18]{raibaut-2012}, it suffices to show that $B^{\textnormal{coh}}_f = \left \{0 \right \}$. We expect the same result holds for general monomials and possibly homogeneous polynomials. First we give some simple criterion to detect non-bifurcate points. The following definition is motivated from the topological point of view.

\begin{defn} \label{defn: fibration}
Let $f \colon X \longrightarrow \mathbb{A}^1_k$ be a morphism of $k$-varieties. Let $a \in \mathbb{A}^1_k(k) = k$ be a rational point. We say that $a$ is a \textit{typical value} if there exists an open subset $u \colon U \longhookrightarrow \mathbb{A}^1_k$ containing $a$ and there exists a commutative diagram 
\begin{equation*}
    \begin{tikzcd}[sep=large]
      U \times_k F \arrow[r,"\varphi",] \arrow[rd,"p",swap] & X\times_{\mathbb{A}^1_k} U \arrow[r,"u_X"] \arrow[d,"f_U"] & X \arrow[d,"f"] \\ 
      & U \arrow[r,hook,"u"] & \mathbb{A}_k^1.
    \end{tikzcd}
\end{equation*}
with $p$ the projection, $F$ a $k$-variety and $\varphi$ an isomorphism. Otherwise, we say that $a$ is an \textit{atypical value} of $f$.
\end{defn}
We are going to prove that if a morphism $f \colon X \longrightarrow \mathbb{A}^1_k$ admits a typical value $a \in \mathbb{A}^1_k(k)$, then the nearby cycles functor at infinity vanishes. This fits our intuition that there should be no nearby cycles at infinity when, in a neighborhood of infinity, the regular becomes a fibration. However, we cannot prove the result in full generality. The reason is in the topological world (when $k = \mathbb{C}$), a function $f \colon X \longrightarrow \mathbb{C}$ becomes a trivial $C^{\infty}$-fibration; that being said, the fiber may not be induced by an algebraic variety, hence we have to restrict a smaller class of fibers $F$. Here we propose a class named weakly elementary neighborhood (motived by elementary fibration considered in \cite{sga4}). This includes basic schemes such as (finite, smooth extensions of) $\mathbb{A}^n_k \setminus \left\{ \textnormal{finite points} \right \}$, $\mathbb{G}_{m,k}^n$, etc.
\begin{defn} \label{defn: weakly elementary fibration}
A $k$-variety $F$ is a \textit{weakly elementary fibration} if it is a part of a commutative diagram
    \begin{equation*}
        \begin{tikzcd}[sep=large]
       F \arrow[r,"j"] \arrow[rd,"p",swap]&  \hat{F} \arrow[d,"\hat{p}"] &  Z \arrow[ld,"e"] \arrow[l,"i",swap]  \\ 
          & k &
        \end{tikzcd}
    \end{equation*}
    with $j$ an open immersion, $i$ the closed complement (with reduced schematic structure), $\hat{p}$ proper smooth, $e$ finite smooth. An \textit{weakly elementary neighborhood} is defined to be a weakly elementary fibration follows by a smooth proper morphism.
\end{defn}
\begin{rmk}
    In the definition above, if we require $\hat{p}$ to have relative dimension $1$, $e$ to be \'etale, then $F$ is called a elementary fibration in \cite[D\'efinition 3.1, Expos\'e XI]{sga4}. The fundamental result in \cite[Expos\'e XI]{sga4} states that any scheme can be covered by Artin neighborhoods, namely, successive composition of elementary fibrations. 
\end{rmk}
\begin{theorem} \label{thm: detecting typical values}
   Let $f \colon X \longrightarrow \mathbb{A}_k^1$ be a morphism of $k$-varieties and $a \in \mathbb{A}^1_k(k)$ be a value. Assume that $a$ is a typical value with fiber $F$. Suppose that $F$ is a weakly elementary neighborhood over $k$ then $\Psi^{\infty}_{\id,f-a}(f-a)_{\eta}^* = 0$.
\end{theorem}

\begin{proof}
    It is enough to assume that $a = 0 \in U$. We have to show that if $0$ admits a locally trivial neighborhood, then 
    \begin{equation*}
        u_{\sigma}^*f_{\sigma!}\Psi_ff_{\eta}^*\longrightarrow u_{\sigma}^*\Psi_{\id}f_{\eta!}f_{\eta}^*
    \end{equation*}
    is an isomorphism since $u_{\sigma}=\id$. There is a commutative diagram 
    \begin{equation*}
        \begin{tikzcd}[sep=large]
           u_{\sigma}^*f_{\sigma!}\Psi_ff_{\eta}^* \arrow[r] \arrow[d] &  u_{\sigma}^*\Psi_{\id}f_{\eta!}f_{\eta}^* \arrow[d] \\ 
           f_{U\sigma!}u_{X\sigma}^*\Psi_f f_{\eta}^* \arrow[d] & \Psi_{u}u_{\eta}^*f_{\eta!}f_{\eta}^*  \arrow[d] \\ 
           f_{U\sigma!}\Psi_{f \circ u_X}u_{X\eta}^*f_{\eta}^* \arrow[r] & \Psi_uf_{U\eta!}u^*_{X\eta}f_{\eta}^*.
        \end{tikzcd}
    \end{equation*}
    Thus, it suffices to show that
    \begin{equation*}
         f_{U\sigma!}\Psi_{f \circ u_X}f_{U\eta}^*u_{\eta}^* \longrightarrow \Psi_uf_{U\eta!}f_{U\eta}^*u_{\eta}^*
    \end{equation*}
    is an isomorphism. Since $\varphi$ is an isomorphism, we can show that
        \begin{equation*}
         f_{U\sigma!}\Psi_{u \circ f_U}\varphi_{\eta*}\varphi_{\eta}^*f_{U\eta}^*u_{\eta}^* \longrightarrow \Psi_uf_{U\eta!}\varphi_{\eta*}\varphi_{\eta}^*f_{U\eta}^*u_{\eta}^*,
    \end{equation*}
    which boils down to proving that 
     \begin{equation*}
        p_{\sigma!}p_{\sigma}^*\Psi_{u }u_{\eta}^* \simeq  p_{\sigma!}\Psi_{u \circ p}p_{\eta}^*u_{\eta}^*  \longrightarrow  \Psi_u p_{\eta!}p_{\eta}^*u_{\eta}^* 
    \end{equation*}
    is an isomorphism. In case $F$ is a weakly elementary fibration, we consider a 
    \begin{equation*}
        \begin{tikzcd}[sep=large]
       U \times_k F \arrow[r,"j"] \arrow[rd,"p",swap]& U \times_k \hat{F} \arrow[d,"\hat{p}"] & U \times_k Z \arrow[ld,"e"] \arrow[l,"i",swap]  \\ 
          & U &
        \end{tikzcd}
    \end{equation*}
    with $\hat{p}$ smooth, projective and $e$ finite, smooth. The localization sequence implies the existence of a triangle
    \begin{equation*}
        p_!p^* \longrightarrow \hat{p}_!\hat{p}^* \longrightarrow e_!e^* \longrightarrow +1.
    \end{equation*}
    Consequently, we obtain a commutative diagram
    \begin{equation*}
        \begin{tikzcd}[sep=large]
          p_{\sigma!}p_{\sigma}^*\Psi_{u }u_{\eta}^*\arrow[r] \arrow[d] & \hat{p}_{\sigma!}\hat{p}_{\sigma}^*\Psi_{u }u_{\eta}^* \arrow[r] \arrow[d] & e_{\sigma!}e_{\sigma}^*\Psi_{u }u_{\eta}^* \arrow[d] \arrow[r] & +1 \\ 
             \Psi_u p_{\eta!}p_{\eta!}^* u_{\eta}^* \arrow[r] & \Psi_u\hat{p}_{\eta!}\hat{p}_{\eta!}^*u_{\eta}^* \arrow[r]  & \Psi_u e_{\eta!}e_{\eta}^*u_{\eta}^* \arrow[r] & +1.  
        \end{tikzcd}
    \end{equation*}
    Since both $\hat{p},e$ are proper and smooth, we see that the two vertical arrows corresponding to $\hat{F},Z$ are isomorphisms (by smooth and proper base change for nearby cycles functors), which forces the first vertical arrow to be an isomorphism.
\end{proof}

\begin{cor}
   Let $f(x_1,...,x_n) = x_1\cdots x_i$ (with $1\leq i \leq n$) viewed as a morphism $f \colon X = \mathbb{A}^n_k \longrightarrow \mathbb{A}^1_k$, then for all $a \in \mathbb{A}^1_k(k)$, $\Psi^{\infty}_{\id,f-a}(f-a)^*_{\eta} \simeq 0$. 
\end{cor}

\begin{proof}
    Over $\gm_{m,k} \longhookrightarrow \mathbb{A}_k^1$, the morphism $f$ is a trivial fibration with fiber $\mathbb{A}_k^{n-i} \times_k \gm_{m,k}^{i-1}$ so it is an immediate consequence of the previous theorem. At $a = 0$, the result is a consequence of the motivic Davison-Meinhardt conjecture proved in \cite{florian+julien-2023}. 
\end{proof}

We end this section by showing that $a = 0$ is an atypical value for the Broughton's example $f(x,y) = x(xy-1)$. For a $k$-variety $p \colon X \longrightarrow \Spec(k)$, we denote by $\textnormal{M}^{\vee}_c(X) = p_!(\mathds{1}_X)$ its cohomological motive with compact support. 
\begin{prop}
 Let $f(x,y) = x(xy-1)$ viewed as a morphism $f \colon X = \mathbb{A}_k^2 \longrightarrow \mathbb{A}_k^1$, then $\Psi_{\id,f}^{\infty}(\mathds{1}_{X_{\eta}}) \not\simeq 0$ and $\Psi_{\id,f-a}^{\infty}(f-a)^*_{\eta} \simeq 0$ for $a \neq 0$. In particular, $0$ is the only atypical value of $f$. 
\end{prop}

\begin{proof}
    For values $a \neq 0$, the proof is similar to the proof of theorem \ref{thm: detecting typical values}. Let $a = 0$, since $f$ is a smooth morphism, we see that 
    \begin{equation*}
        \Psi_f(\mathds{1}_{X_{\eta}}) \simeq \Psi_ff_{\eta}^*(\mathds{1}_{\gm_{m,k}}) \simeq f_{\sigma}^*\Psi_{\id}(\mathds{1}_{\gm_{m,k}}) \simeq f_{\sigma}^*(\mathds{1}_{\sigma}) \simeq \mathds{1}_{X_{\sigma}}
    \end{equation*}
    since $\Psi_{\id}$ is unital by \cite[Corollaire 3.5.19]{ayoub-thesis-2}. The special fiber $X_{\sigma}$ is isomorphism to a disjoint union $\mathbb{A}_k^1\sqcup \gm_{m,k}$. Therefore, we get 
    \begin{equation*}
        f_{\sigma!}\Psi_f(\mathds{1}_{X_{\eta}}) \simeq f_{\sigma!}(\mathds{1}_{X_{\sigma}}) \simeq \textnormal{M}^{\vee}_c(\mathbb{A}^1_k) \oplus \textnormal{M}^{\vee}_c(\gm_{m,k}).
    \end{equation*}
    On the other hand, the generic fiber is isomorphic to $X_1 \times_k \gm_{m,k} \simeq \gm_{m,k} \times \gm_{m,k}$ (with $X_1$ the fiber over $1$) via the parametrization (at the level of points)
   \begin{align*}
       X_{\eta} = \left \{(x,y) \mid x(xy-1) = t \neq 0 \right \} & \longrightarrow X_1 \times_k \gm_{m,k} \\
       (x,y) & \longrightarrow ((t^{-1}x,ty),t).
   \end{align*}
   We argue as in \cite[Proposition 5.2]{florian+julien-2023} to deduce that $\Psi_{\id}f_{\eta!}f_{\eta}^* \simeq f_{1!}f_1^*$. Thus, we obtain that 
   \begin{equation*}
       \Psi_{\id}f_{\eta!}(\mathds{1}_{X_{\eta}}) \simeq f_{1!}(\mathds{1}_{X_1}) \simeq \textnormal{M}^{\vee}_c(\gm_{m,k} \times_k \gm_{m,k}) \simeq \textnormal{M}^{\vee}_c(\gm_{m,k}) \otimes \textnormal{M}^{\vee}_c(\gm_{m,k})
   \end{equation*}
   by the Kunneth formula. One sees immediately that $f_{\sigma!}\Psi_f(\mathds{1}_{X_{\eta}})$ is not isomorphic to $\Psi_{\id}f_{\eta!}(\mathds{1}_{X_{\eta}})$. This is even true at the level of $K_0(\shct(\sigma))$. Indeed, we know that $\textnormal{M}^{\vee}_c(\mathbb{A}_k^1) = \mathds{1}_k(-1)[-2]$ and the localization
   \begin{equation*}
       \textnormal{M}^{\vee}_c(\gm_{m,k}) \longrightarrow  \mathds{1}_k(-1)[-2] \longrightarrow \mathds{1}_k \longrightarrow +1.
   \end{equation*}
   implies that $[\textnormal{M}^{\vee}_c(\gm_{m,k})] = [\mathds{1}_k(-1)] - [\mathds{1}_k]$ in $K_0(\shct(\sigma))$. Consequently, we can calculate 
   \begin{equation} \label{eq:3.1}
   \begin{split} 
      [ f_{\sigma!}\Psi_f(\mathds{1}_{X_{\eta}}) ] & = 2[\mathds{1}_k(-1)] - [\mathds{1}_k] \\ 
      [\Psi_{\id}f_{\eta!}(\mathds{1}_{X_{\eta}}) ] & = ([\mathds{1}_k(-1)]-[\mathds{1}_k])^2 = [\mathds{1}_k(-2)] - 2[\mathds{1}_k(-1)] + [\mathds{1}_k].
      \end{split} 
   \end{equation}
  Under Betti realization functor (see \cite{ayoub-2010}) and Euler characteristics, the term $[ f_{\sigma!}\Psi_f(\mathds{1}_{X_{\eta}}) ]$ equals $1$ but the term $ [\Psi_{\id}f_{\eta!}(\mathds{1}_{X_{\eta}}) ]$ equals $0$ (this reflects the fact that we have Betti numbers $b_0(f^{-1}(0))=2,b_1(f^{-1}(0))=1$ but $b_0(f^{-1}(a)) = b_1(f^{-1}(a))=1$ if $a \neq 0$).
\end{proof}

\section{Appendix: Elimination of quasi-projectiveness}
In this appendix, we eliminate the quasi-projectiveness assumption of varieties in the theory of motivic nearby cycles functors, which is implicitly used in theorem \ref{existence of nearby cycles functors at infty}. In \cite{ayoub-thesis-2}, the author use the smooth-closed factorization of morphisms, which is available for morphisms between quasi-projectiveness varieties. Here we use the proper-open factorization, which is available for morphisms between general varieties, thanks to the Nagata compactification theorem. We begin with a simple lemma.
\begin{lem} \label{proper base change for nearby cycles functor}
The base change morphism $\beta_g\colon \Psi_f g_{\eta*} \longrightarrow g_{\sigma*}\Psi_{f \circ g}$ is an isomorphism if $g$ is proper.     
\end{lem}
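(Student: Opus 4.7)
The plan is to inspect the defining diagram of $\beta_g$ and show that each of the four constituent $2$-cells is an isomorphism under the hypothesis that $g$ is proper. Reading the diagram from left to right, $\beta_g$ is the vertical pasting of: an $\Ex^*_*$ cell comparing $(\theta_f)^* p_I^*$ with the pushforward $g_{\eta*}$ (on the left as a functor between $\sh$'s, on the right as its lift to the derivator setting); an $\Ex_{**}$ cell for $(j_{f \circ g} \circ \theta_{f \circ g})_*$, which is nothing but the functoriality isomorphism coming from $j_f \circ \theta_f \circ g_\eta = g \circ j_{f \circ g} \circ \theta_{f \circ g}$; an $\Ex^*_*$ cell comparing $(i_f)^*$ with $g_*$; and finally an $\Ex_{\#*}$ cell comparing $(p_I)_{\#}$ with $g_{\sigma*}$.

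The first and third $2$-cells are instances of proper base change in the setting of derivators. Since $g$ is proper, the morphisms $g_\eta$, $g$, and $g_\sigma$ are proper, and hence their constant-diagram lifts to the index category $I = \Delta \times \mathbb{N}^{\times}$ are proper argument-by-argument. Proposition \ref{proper base change for derivators} therefore applies and yields the desired isomorphisms; note that the ambient squares are cartesian because $\mathscr{R}_Y = \mathscr{R}_X \times_{X_\eta} Y_\eta$ and $p_I$ is a plain projection. The second $2$-cell is an isomorphism by the functoriality of $(-)_*$ and requires no hypothesis.

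The main obstacle is the last $2$-cell, the $\Ex_{\#*}$ morphism $(p_I)_{\#} g_{\sigma*} \longrightarrow g_{\sigma*}(p_I)_{\#}$. In the quasi-projective setting handled in Ayoub's thesis one would appeal to \cite[Proposition 2.4.46]{ayoub-thesis-1}, but without quasi-projectiveness one has to use the general result. This is precisely what proposition \ref{direct images commute with homotopy colimits} provides: since $g_\sigma$ is proper, the $\Ex_{\#*}$ cell is an isomorphism. Composing the four isomorphisms vertically yields the result.
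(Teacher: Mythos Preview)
Your proof is correct and follows exactly the paper's approach: analyze each of the $2$-cells in the defining diagram of $\beta_g$, invoke proposition~\ref{proper base change for derivators} for the two $\Ex^*_*$ cells and proposition~\ref{direct images commute with homotopy colimits} for the $\Ex_{\#*}$ cell. You have simply spelled out a few more details (cartesianness of the relevant squares, properness of $g_\eta$, $g$, $g_\sigma$, triviality of the $\Ex_{**}$ cell) than the paper's terse proof.
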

\begin{proof}
The base change morphism $\beta_g \colon \Psi_f g_{\eta*} \longrightarrow g_{\sigma *}\Psi_{f \circ g}$ is given by the diagram
\begin{equation*}
    \begin{tikzcd}[column sep = large, row sep = 1.2cm]
       \sh(Y_{\eta}) \arrow[d,"g_{\eta*}",swap] \arrow[r,"(\theta_{f \circ g})^*p_I^*"] & \shbb(\mathscr{R}_Y,I) \arrow[r,"(j_{f \circ g} \circ \theta_{f \circ g})_*"] \arrow[d,"g_{\eta *}"] & \shbb(Y,I) \arrow[d,"g_*"] \arrow[r,"(i_{f \circ g})^*"] &  \shbb(Y_{\sigma},I) \arrow[d,"g_{\sigma *}"] \arrow[r,"(p_I)_{\#}"] & \sh(Y_{\sigma}) \arrow[d,"g_{\sigma *}"] \\ 
       \sh(X_{\eta}) \arrow[Rightarrow, shorten >=30pt, shorten <=30pt, ur,"\Ex^*_*"]   \arrow[r,"(\theta_f)^*p_I^*"] & \shbb(\mathscr{R}_X,I) \arrow[r,"(j_f \circ \theta_f)_*"] \arrow[Rightarrow, shorten >=30pt, shorten <=30pt, ur,"\Ex_{**}"]  & \shbb(X,I) \arrow[r,"(i_f)^*"] \arrow[Rightarrow, shorten >=30pt, shorten <=30pt, ur,"\Ex^*_*"]  &  \shbb(X_{\sigma},I) \arrow[r,"(p_I)_{\#}"] \arrow[Rightarrow, shorten >=30pt, shorten <=30pt, ur,"\Ex_{\#*}"]  & \sh(X_{\sigma}).
    \end{tikzcd}
\end{equation*} 
  In the defining diagram of $\beta_g$, both $\Ex_*^*$ are isomorphisms due to proposition \ref{proper base change for derivators}. The morphism $\Ex_{\#*}$ is an isomorphism thanks to proposition \ref{direct images commute with homotopy colimits}.
\end{proof}

\begin{prop} \label{magic diagram}
Let $X$ be a $k$-variety and $f\colon X \longrightarrow \mathbb{A}_k^1$ be a morphism of $k$-varieties. Let $\overline{X},Y,\overline{Y}$ be $k$-varieties. Given a cartesian diagram of $k$-varieties
\begin{equation*}
    \begin{tikzcd}[sep=large]
        \overline{Y} \arrow[d,"k",swap] \arrow[rd,phantom,"\textnormal{(C)}"] \arrow[r,"\overline{u}"] & \overline{X} \arrow[d,"h"] \\ 
        Y \arrow[r,"u"] & X 
    \end{tikzcd}
\end{equation*}
with $u,\overline{u}$ are smooth morphisms, then the cube 
    \begin{equation} \label{magic cube}
       \begin{tikzcd}[sep=scriptsize]
         \sh(Y_{\eta}) \arrow[rr,"\Psi_{f \circ u}"] \arrow[rd,"u_{\eta \#}"] & & \sh(Y_{\sigma}) \arrow[rd,"u_{\sigma \#}"] & \\ 
           & \sh(X_{\eta}) & & \sh(X_{\sigma})   \\ 
         \sh(\overline{Y}_{\eta}) \arrow[uu,"k_{\eta *}"]  \arrow[rr,"\Psi_{f \circ u \circ k }",dashed,pos=0.2] \arrow[rd,"\overline{u}_{\eta \#}",swap]  & & \sh(\overline{Y}_{\sigma}) \arrow[rd,"\overline{u}_{\sigma \#}",dashed]  \arrow[uu,"k_{\sigma *}",pos=0.3,dashed]   & \\ 
            & \sh(\overline{X}_{\eta}) \arrow[uu,"h_{\eta *}",pos=0.3,crossing over]  \arrow[rr,"\Psi_{f \circ h}"] & & \sh(\overline{X}_{\sigma}) \arrow[uu,"h_{\sigma *}"]  \arrow[from=2-2,to=2-4,"\Psi_{f}",crossing over, pos=0.2] 
            \end{tikzcd}
   \end{equation}
   is commutative. In other words, the diagram 
  \begin{equation*}
        \begin{tikzcd}[sep=large]
            h_{\sigma *}\overline{u}_{\sigma \#}\Psi_{f \circ u \circ k} \arrow[r]  &  h_{\sigma *}\Psi_{f \circ h}\overline{u}_{\eta \#} \\ 
             u_{\sigma \#}k_{\sigma *}\Psi_{f \circ u \circ k} \arrow[u,"(\Ex_{\#*})"]   & \Psi_{f} h_{\eta *}\overline{u}_{\eta \#}  \arrow[u] \\ 
             u_{\sigma \#}\Psi_{f \circ u}k_{\eta *}\arrow[r] \arrow[u] &   \Psi_{f}u_{\eta \#}k_{\eta *}. \arrow[u,"(\Ex_{\#*})",swap]
        \end{tikzcd}
    \end{equation*}
    is commutative. If moreover, $u,\overline{u}$ are open immersions and $h,k$ are proper, then we can remove "cartesian" in the statement above.
\end{prop}

\begin{proof}
The second case can be obtained from the first one by the interchange law so we can focus on the case when $\textnormal{(C)}$ is cartesian. The cube \ref{magic cube} is commutative if the cube
      \begin{equation*}
       \begin{tikzcd}[sep=scriptsize]
         \sh(Y_{\eta}) \arrow[rr,"\Psi_{f \circ u}"] \arrow[rd,"u_{\eta \#}"] \arrow[dd,"k_{\eta}^*",swap] & & \sh(Y_{\sigma}) \arrow[rd,"u_{\sigma \#}"] \arrow[dd,"k_{\sigma}^*",pos=0.7,dashed] & \\ 
           & \sh(X_{\eta})  & & \sh(X_{\sigma}) \arrow[dd,"h_{\sigma}^*"]  \\ 
         \sh(\overline{Y}_{\eta}) \arrow[rr,"\Psi_{f \circ u \circ k}",dashed,pos=0.2] \arrow[rd,"\overline{u}_{\eta \#}",swap]  & & \sh(\overline{Y}_{\sigma}) \arrow[rd,"\overline{u}_{\sigma \#}",dashed]  & \\ 
            & \sh(\overline{X}_{\eta}) \arrow[rr,"\Psi_{f \circ h}"] & & \sh(\overline{X}_{\sigma}) \arrow[from=2-2,to=2-4,"\Psi_{f}",crossing over, pos=0.2] \arrow[from=2-2,to=4-2,"h_{\eta}^*",pos=0.3,crossing over],
       \end{tikzcd}
   \end{equation*}
   is commutative according to \cite[Corollaire 1.1.14]{ayoub-thesis-1}. Note that by reversing the direction of $2$-morphisms, we obtain a dual version of \cite[Corollaire 1.1.14]{ayoub-thesis-1} and using this, we reduce to the commutativity of 
      \begin{equation*}
       \begin{tikzcd}[sep=scriptsize]
         \sh(Y_{\eta}) \arrow[rr,"\Psi_{f \circ u}"]  \arrow[dd,"k_{\eta}^*",swap] & & \sh(Y_{\sigma}) \arrow[dd,"k_{\sigma}^*",pos=0.7,dashed] & \\ 
           & \sh(X_{\eta})  \arrow[lu,"u_{\eta}^*",swap]  & & \sh(X_{\sigma}) \arrow[dd,"h_{\sigma}^*"] \arrow[lu,"u_{\eta}^*",swap] \\ 
         \sh(\overline{Y}_{\eta}) \arrow[rr,"\Psi_{f \circ u \circ k}",dashed,pos=0.2]  & & \sh(\overline{Y}_{\sigma})  & \\ 
            & \sh(\overline{X}_{\eta}) \arrow[lu,"\overline{u}_{\eta}^*"]  \arrow[rr,"\Psi_{f \circ h}"] & & \sh(\overline{X}_{\sigma}) \arrow[lu,"\overline{u}_{\sigma}^*"] \arrow[from=2-2,to=2-4,"\Psi_{f}",crossing over, pos=0.2] \arrow[from=2-2,to=4-2,"h_{\eta}^*",pos=0.3,crossing over] ,
       \end{tikzcd}
   \end{equation*}
   which is trivial. 
\end{proof}

\begin{cor} \label{second magic diagram}
Let $X$ be a $k$-variety and $f\colon X \longrightarrow \mathbb{A}_k^1$ be a morphism of $k$-varieties. Let $\overline{X},Y,\overline{Y}$ be $k$-varieties. Given a cartesian diagram of $k$-varieties
\begin{equation*}
    \begin{tikzcd}[sep=large]
        \overline{Y} \arrow[d,"k",swap] \arrow[r,"\overline{u}"] & \overline{X} \arrow[d,"h"] \\ 
        Y \arrow[r,"u"] & X 
    \end{tikzcd}
\end{equation*}
with $u,\overline{u}$ are smooth morphisms, then the cube 
    \begin{equation} \label{second magic cube}
       \begin{tikzcd}[sep=scriptsize]
         \sh(Y_{\eta}) \arrow[rr,"\Psi_{f \circ u}"] \arrow[rd,"u_{\eta \#}"] & & \sh(Y_{\sigma}) \arrow[rd,"u_{\sigma \#}"] & \\ 
           & \sh(X_{\eta}) & & \sh(X_{\sigma})   \\ 
         \sh(\overline{Y}_{\eta}) \arrow[uu,"k_{\eta !}"]  \arrow[rr,"\Psi_{f \circ u \circ k }",dashed,pos=0.2] \arrow[rd,"\overline{u}_{\eta \#}",swap]  & & \sh(\overline{Y}_{\sigma}) \arrow[rd,"\overline{u}_{\sigma \#}",dashed]  \arrow[uu,"k_{\sigma !}",pos=0.3,dashed]   & \\ 
            & \sh(\overline{X}_{\eta}) \arrow[uu,"h_{\eta !}",pos=0.3,crossing over]  \arrow[rr,"\Psi_{f \circ h}"] & & \sh(\overline{X}_{\sigma}) \arrow[uu,"h_{\sigma !}"] \arrow[from=2-2,to=2-4,"\Psi_{f}",crossing over, pos=0.2] 
            \end{tikzcd}
   \end{equation}
   is commutative. In other words, the diagram 
  \begin{equation*}
        \begin{tikzcd}[sep=large]
            h_{\sigma !}\overline{u}_{\sigma \#}\Psi_{f \circ u \circ k} \arrow[r]  &  h_{\sigma!}\Psi_{f \circ h}\overline{u}_{\eta \#} \\ 
             u_{\sigma \#}k_{\sigma !}\Psi_{f \circ u \circ k} \arrow[u,"(\Ex_{\#!})"]   & \Psi_{f} h_{\eta !}\overline{u}_{\eta \#}  \arrow[u] \\ 
             u_{\sigma \#}\Psi_{f \circ u}k_{\eta !}\arrow[r] \arrow[u] &   \Psi_{f}u_{\eta \#}k_{\eta !}. \arrow[u,"(\Ex_{\#!})",swap]
        \end{tikzcd}
    \end{equation*}
    is commutative. 
\end{cor}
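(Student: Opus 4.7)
The plan is to reduce to Proposition \ref{magic diagram} via Nagata compactification. First, choose a Nagata factorization $h = \overline{h} \circ w$ with $w \colon \overline{X} \longhookrightarrow X'$ a quasi-compact open immersion and $\overline{h} \colon X' \longrightarrow X$ proper. Pulling back along $u$ and exploiting the cartesianness of the given square, we obtain a compatible factorization $k = \overline{k} \circ v$, where $v$ is an open immersion (base change of $w$) and $\overline{k}$ is proper (base change of $\overline{h}$), fitting into a tower of cartesian squares
\begin{equation*}
\begin{tikzcd}
\overline{Y} \arrow[r,"\overline{u}"] \arrow[d,"v"'] & \overline{X} \arrow[d,"w"] \\
Y' \arrow[r,"u'"] \arrow[d,"\overline{k}"'] & X' \arrow[d,"\overline{h}"] \\
Y \arrow[r,"u"] & X,
\end{tikzcd}
\end{equation*}
with $u'$ smooth. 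By the recipe of \S\ref{exceptional direct image}, we have $h_! = \overline{h}_* w_{\#}$ and $k_! = \overline{k}_* v_{\#}$.

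Next, I would split the cube \eqref{second magic cube} horizontally into two sub-cubes along the middle layer $\sh(Y'_?), \sh(X'_?)$. The \emph{lower sub-cube} has vertical arrows $\overline{k}_{?*} = \overline{k}_{?!}$ and $\overline{h}_{?*} = \overline{h}_{?!}$ (by properness) and commutes by Proposition \ref{magic diagram} applied to the lower cartesian square. The \emph{upper sub-cube} has vertical arrows $v_{?\#} = v_{?!}$ and $w_{?\#} = w_{?!}$ (since $v, w$ are open immersions); its commutativity is a $\#$-pushforward analogue of Proposition \ref{magic diagram}, which I would prove by the very same strategy used there: passing to right adjoints via $(v_{\#} \dashv v^*)$ and $(w_{\#} \dashv w^*)$ reduces the cube to one of inverse images $v^*, w^*$ going downward, which is trivially commutative. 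Pasting the two sub-cubes vertically produces the cube \eqref{second magic cube}, provided one verifies that $\Ex_{\#!}$ for the original cartesian square $(u, \overline{u}, h, k)$ agrees, under the identifications $h_! = \overline{h}_* w_{\#}$ and $k_! = \overline{k}_* v_{\#}$, with the composition of $\Ex_{\#*}$ for the lower cartesian square (applied to $v_{\#}$) and the canonical identification $(u' \circ v)_{\#} = (w \circ \overline{u})_{\#}$ coming from $\Ex_{\#\#}$ for the upper cartesian square. This is a formal diagram chase from the definition of $\Ex_{\#!}$ via Nagata factorization recalled in \S\ref{exceptional direct image}.

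The main obstacle will be the commutativity of the upper sub-cube, which requires redoing the adjunction argument of Proposition \ref{magic diagram} in the $\#$-pushforward setting rather than applying that proposition as a black box; once this $\#$-analogue is in place, the remainder is a routine compatibility of exchange morphisms between $\Ex_{\#!}$, $\Ex_{\#*}$, and $\Ex_{\#\#}$.
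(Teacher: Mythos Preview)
Your proposal is correct and follows the same route as the paper: Nagata-compactify $h$, base-change to obtain a compatible compactification of $k$, and stack the cube into a proper layer (handled by Proposition~\ref{magic diagram}) and an open-immersion layer. The only divergence is one of emphasis: you flag the open-immersion sub-cube as the ``main obstacle'' requiring a fresh adjunction argument, whereas the paper dispatches it in one line as commutative ``because it involves only one type of base change structure.'' The point is that in that sub-cube every vertical and diagonal edge is a $(-)_{\#}$ for a smooth morphism, so every non-trivial face carries a $\gamma_?$, and the compatibility of $\gamma_?$ with composition of smooth morphisms is immediate from the corresponding (trivial) compatibility of $\alpha_?$; your adjunction reduction to inverse images reproves exactly this, so there is no real obstacle. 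Your extra care in matching $\Ex_{\#!}$ against the pasted $\Ex_{\#*}$ and $\Ex_{\#\#}$ is warranted but also routine from the definition of $(-)_!$ via compactification in \S\ref{exceptional direct image}.
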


\begin{proof}
Let 
\begin{equation*}
\begin{tikzcd}[sep=large]
    \overline{X} \arrow[r,"u_X"] \arrow[rd,"h",swap] & \hat{X} \arrow[d,"\hat{h}"] \\ 
    & X 
    \end{tikzcd}
\end{equation*}
be a compactification of $h$. By base change, we obtain cartesian squares where the left column is a compactification of $k$
\begin{equation*}
    \begin{tikzcd}[sep=large]
        \overline{Y} \arrow[dd,"k",swap,bend right = 50] \arrow[d,"u_Y",swap] \arrow[r,"\overline{u}"] & \overline{X} \arrow[d,"u_X"] \arrow[dd,"h",bend left = 50] \\ 
          \hat{Y} \arrow[d,"\hat{k}",swap] \arrow[r,"\hat{u}"] & \hat{X} \arrow[d,"\hat{h}"] \\ 
        Y \arrow[r,"u"] & X 
    \end{tikzcd}
\end{equation*}
The cube \ref{second magic cube} is decomposed into two cubes 
 \begin{equation*} 
       \begin{tikzcd}[sep=scriptsize]
         \sh(Y_{\eta}) \arrow[rr,"\Psi_{f \circ u}"] \arrow[rd,"u_{\eta \#}"] & & \sh(Y_{\sigma}) \arrow[rd,"u_{\sigma \#}"] & \\ 
           & \sh(X_{\eta}) & & \sh(X_{\sigma})   \\ 
         \sh(\hat{Y}_{\eta}) \arrow[uu,"\hat{k}_{\eta *}"]  \arrow[rr,"\Psi_{f \circ u \circ \hat{k}}",dashed,pos=0.2] \arrow[rd,"\hat{u}_{\eta \#}",swap]  & & \sh(\hat{Y}_{\sigma}) \arrow[rd,"\hat{u}_{\sigma \#}",dashed]  \arrow[uu,"\hat{k}_{\sigma *}",pos=0.3,dashed]   & \\ 
            & \sh(\hat{X}_{\eta}) \arrow[uu,"\hat{h}_{\eta *}",pos=0.3,crossing over]   & & \sh(\hat{X}_{\sigma}) \arrow[uu,"\hat{h}_{\sigma *}"] \\ 
             \sh(\overline{Y}_{\eta}) \arrow[uu,"u_{Y\eta \#}"]  \arrow[rr,"\Psi_{f \circ u \circ k }",dashed,pos=0.2] \arrow[rd,"\overline{u}_{\eta \#}",swap]  & & \sh(\overline{Y}_{\sigma}) \arrow[rd,"\overline{u}_{\sigma \#}",dashed]  \arrow[uu,"u_{Y \sigma \#}",pos=0.3,dashed]   & \\ 
            & \sh(\overline{X}_{\eta}) \arrow[uu,"u_{X\eta \#}",pos=0.3,crossing over]  \arrow[rr,"\Psi_{f \circ h}"] & & \sh(\overline{X}_{\sigma}) \arrow[uu,"u_{X\sigma  \#}"]  \arrow[from=2-2,to=2-4,"\Psi_{f}",pos=0.2,crossing over] \arrow[from=4-2,to=4-4,"\Psi_{f \circ \hat{h}}",pos=0.2,crossing over]
            \end{tikzcd}
   \end{equation*}
   in which the upper cube is commutative thanks to proposition \ref{magic diagram} and the lower cube is commutative because it involves only one type of base change structure. 
\end{proof}

\subsection{More on the exceptional direct image operation} \label{exceptional direct image} Let us explain how one can construct $g_!$ for a morphism $g \colon Y \longrightarrow X$ of $k$-varieties. We observe that the base change morphism $\Ex_{\#*}$ can be defined without the cartesian property but requiring some assumptions on morphisms. More concretely, given a commutative square 
\begin{equation*}
    \begin{tikzcd}[sep=large]
         X' \arrow[d,"f'",swap] \arrow[dr,phantom,"\textnormal{(C)}"] \arrow[r,"g'"] & X \arrow[d,"f"] \\ 
            Y' \arrow[r,"g"] & Y
    \end{tikzcd}
\end{equation*}
with $g,g'$ open immersions and $f,f'$ proper, we form a commutative diagram
\begin{equation*}
    \begin{tikzcd}[sep=large]
   X' \arrow[drr,"g'",bend left = 20] \arrow[dr,"\Phi"] \arrow[ddr,"f'",swap,bend right = 20] & & \\ 
         & Z \arrow[dr,phantom,"\textnormal{(C')}"] \arrow[r,"g^{''}"] \arrow[d,"f^{''}",swap] & X \arrow[d,"f"] \\ 
          &   Y' \arrow[r,"g"] & Y
    \end{tikzcd}
\end{equation*}
where $\textnormal{(C')}$ is cartesian. From the cartesian case, we have $\Ex_{\#*}(\Delta') \colon g_{\#}f^{''}_* \longrightarrow f_*g^{''}_{\#}$. The morphism $\Phi$ is both open and closed so the canonical transformation $\gamma \colon \Phi_{\#} \longrightarrow \Phi_*$ (obtained by the adjunction of the inverse of the counit $\Phi^*\Phi_* \overset{\sim}{\longrightarrow} \id$) is an isomorphism. Therefore we have a base change morphism
\begin{equation*}
    \Ex_{\#*}(\textnormal{C}) \colon g_{\#}f'_* = g_{\#}f^{''}_*\Phi_* \overset{\Ex_{\#*}(\textnormal{C'}}{\longrightarrow} f_*g^{''}_{\#} \Phi_* \overset{\gamma^{-1}}{\longrightarrow} f_*g^{''}_{\#} \Phi_{\#} = f_*g'_{\#}.
\end{equation*}
In particular, given a commutative diagram
\begin{equation*}
    \begin{tikzcd}[sep=large]
        Y \arrow[r,"u"] \arrow[d,equal] & \overline{Y} \arrow[d,"\Phi"] \\ 
        Y \arrow[r,"w"] & \hat{Y}.
    \end{tikzcd}
\end{equation*}
where $u,w$ are open immersions, $\Phi$ is a closed and open immersion, then there is an isomorphism $\Ex_{\#*}(\Delta) \colon w_{\#} \overset{\sim}{\longrightarrow} \Phi_*u_{\#}$ and because of its importance, we shall denote by $\mathrm{BC}^{\Phi}\colon w_{\#} \overset{\sim}{\longrightarrow} \Phi_*u_{\#}$ this morphism. By the Nagata compactification theorem, there exists a factorization 
\begin{equation*}
    \begin{tikzcd}[sep=large]
        Y \arrow[r,"u"] \arrow[rd,"g",swap] & \overline{Y} \arrow[d,"\overline{g}"] \\ 
        & X
    \end{tikzcd}
\end{equation*}
where $u$ is a quasi-compact open immersion and $\overline{g}$ is proper. We define $g_! \coloneqq \overline{g}_*u_{\#}$ and by definition we have a natural transformation $g_! \longrightarrow g_*$ induced by the canonical transformation $u_{\#}\longrightarrow u_*$. This definition does not depend on the choice of the compactification in the following sense: the category of compactifications of $g$ is filtered, so given two compactifications of $g$, one dominates another 
 \begin{equation*} 
    \begin{tikzcd}[sep=large]
    &  & \hat{Y} \arrow[ldd,bend left = 20,"\hat{g}"]  \\
        Y \arrow[r,"u"] \arrow[rru,bend left = 20,"w"] \arrow[rd,"g",swap] & \overline{Y} \arrow[d,"\overline{g}"] \arrow[ru,"\Phi"] & \\ 
        & X  &
     \end{tikzcd}
\end{equation*}
then there is a canonical isomorphism 
\begin{equation*}
    \mathrm{BC}^{\Phi} \colon \hat{g}_*w_{\#} \overset{\sim}{\longrightarrow} \overline{g}_*u_{\#}.
\end{equation*}
Let $h \colon Z \longrightarrow Y$ be another morphism of $k$-varieties, we then have a connection isomorphism $g_!h_! \overset{\sim}{\longrightarrow} (g \circ h)_!$ as follows.  Let  
    \begin{equation*}\begin{tikzcd}[sep=large]
        Z \arrow[r,"\overline{u}"] \arrow[rd,"h",swap] & \overline{Z} \arrow[d,"\overline{h}"] \\ 
        & Y.
    \end{tikzcd}
    \end{equation*}
    be a compactification of $h$. Note that $u$ is always quasi-compact, and hence separated of finite type. Consequently, $u \circ \overline{h}$ is compactifyable. There exists a commutative diagram 
    \begin{equation*}
    \begin{tikzcd}[sep=large]
        Z \arrow[r,"\overline{u}"] \arrow[dr,"h",swap] & \overline{Z} \arrow[dr,phantom,"\textnormal{(C)}"] \arrow[d,"\overline{h}",swap] \arrow[r,"\hat{u}"] & \hat{Z} \arrow[d,"\hat{h}"] \\ 
         & Y \arrow[r,"u"] \arrow[dr,"g",swap] & \overline{Y} \arrow[d,"\overline{g}"] \\ 
       &   & X.
    \end{tikzcd}
    \end{equation*}
    where $\hat{h}$ is proper and $\hat{u}$ is an open immersion. Then the desired isomorphism is given by
    \begin{equation*}
        g_! h_! = \overline{g}_*u_{\#}\overline{h}_*\overline{u}_{\#} \overset{\Ex_{\#*}(\textnormal{C})}{\longrightarrow} \overline{g}_* \hat{h}_*\hat{u}_{\#}\overline{u}_{\#} = (\overline{g} \circ \hat{h})_*(\hat{u}\circ \overline{u})_{\#} = (g \circ h)_!.
    \end{equation*}

\subsection{Base change of nearby cycles functors with respect to exceptional direct images} Keep the notation of the preceding section. In this section, we construct a base change morphism 
\begin{equation*}
\begin{tikzcd}[sep=large]
         \sh(Y_{\eta}) \arrow[r,"\Psi_{f \circ g}"] \arrow[d,"g_{\eta!}",swap] & \sh(Y_{\sigma}) \arrow[Rightarrow, shorten >=27pt, shorten <=27pt, dl,"\mu_g"] \arrow[d,"g_{\sigma!}"] \\ 
         \sh(X_{\eta}) \arrow[r,"\Psi_{f}"] &  \sh(X_{\sigma}).
    \end{tikzcd} 
    \end{equation*}
for varieties (not necessarily quasi-projective varieties) and $\mu_g$ becomes an isomorphism if $g$ is proper. Now we come to the construction of $\mu_g \colon g_{\sigma!}\Psi_{f \circ g} \longrightarrow \Psi_f g_{\eta!}$. By the Nagata compactification theorem, there exists a diagram 
\begin{equation*}
    \begin{tikzcd}[sep=large]
        Y \arrow[r,"u"] \arrow[rd,"g",swap] & \overline{Y} \arrow[d,"\overline{g}"] \\ 
        & X
    \end{tikzcd}
\end{equation*}
where $u$ is a quasi-compact open immersion and $\overline{g}$ is proper. We define
\begin{equation*}
    \mu_u \coloneqq \gamma_u \colon u_{\sigma \#}\Psi_{f \circ \overline{g} \circ u} \longrightarrow \Psi_{f \circ \overline{g}}u_{\eta \#}.
\end{equation*}
We apply $\overline{g}_{\sigma*}$ to both sides and remind that $g_{\sigma!} = \overline{g}_{\sigma*}u_{\sigma \#}$
\begin{equation*}
   \mu_u\colon g_{\sigma!}\Psi_{f \circ \overline{g} \circ u} \longrightarrow g_{\sigma*}\Psi_{f \circ \overline{g}}u_{\eta \#}.
\end{equation*}
By using lemma \ref{proper base change for nearby cycles functor}, we see that $\beta^{-1}_{\overline{g}}\colon \overline{g}_{\sigma *}\Psi_{f \circ \overline{g}} \overset{\sim}{\longrightarrow} \Psi_f \overline{g}_{\eta*}$ is an isomorphism. We define the base change morphism 
\begin{equation*}
    \mu_g  \coloneqq \beta_{\overline{g}}^{-1} \circ \mu_u \colon g_{\sigma!}\Psi_{f \circ g} \longrightarrow \Psi_f g_{\eta!}
\end{equation*}
which is clearly an isomorphism provided that $g$ is proper. In diagram, $\mu_g$ is the composition 
\begin{equation*}
    \begin{tikzcd}[sep=large]
         \sh(Y_{\eta}) \arrow[r,"\Psi_{f \circ g}"] \arrow[d,"u_{\eta \#}",swap] & \sh(Y_{\sigma}) \arrow[Rightarrow, shorten >=27pt, shorten <=27pt, dl,"\gamma_g"] \arrow[d,"u_{\sigma \#}"] \\ 
         \sh(\overline{Y}_{\eta}) \arrow[r,"\Psi_{f \circ \overline{g}}"] \arrow[d,"\overline{g}_{\eta *}",swap] & \sh(\overline{Y}_{\sigma}) \arrow[Rightarrow, shorten >=27pt, shorten <=27pt, dl,"(\beta_{\overline{g}})^{-1}"] \arrow[d,"\overline{g}_{\sigma*}"] \\ 
         \sh(X_{\eta}) \arrow[r,"\Psi_{f}"] &  \sh(X_{\sigma})
    \end{tikzcd}
\end{equation*}
Let us prove that $\mu_g$ is independent of the choice of the compactification. Since the category of compactifications is filtered, it is sufficient to prove the following
\begin{lem}
Let $X,Y$ be $k$-varieties and $g \colon Y \longrightarrow X$, $f \colon X \longrightarrow \mathbb{A}_k^1$ be morphisms of $k$-varieties. Given two compactifications of $g$, one dominates another
 \begin{equation*} 
    \begin{tikzcd}[sep=large]
    &  & \hat{Y} \arrow[ldd,bend left = 20,"\hat{g}"]  \\
        Y \arrow[r,"u"] \arrow[rru,bend left = 20,"w"] \arrow[rd,"g",swap] & \overline{Y} \arrow[d,"\overline{g}"] \arrow[ru,"\Phi"] & \\ 
        & X  &
     \end{tikzcd}
\end{equation*}
then we have a commutative diagram 
\begin{equation*}
    \begin{tikzcd}[sep=large]
         \hat{g}_{\sigma*}w_{\sigma \#}\Psi_{f \circ \hat{g} \circ w} \arrow[r," \beta_{\hat{g}}^{-1} \circ \gamma_w"] \arrow[d,"\mathrm{BC}_{\sigma}^{\Phi}",swap] & \Psi_f \hat{g}_{\eta*}w_{\eta \#} \arrow[d,"\mathrm{BC}_{\eta}^{\Phi}"] \\ 
       \overline{g}_{\sigma*}u_{\sigma \#} \Psi_{f \circ \overline{g} \circ u} \arrow[r," \beta_{\overline{g}}^{-1} \circ \gamma_u"] & \Psi_f \overline{g}_{\eta*}u_{\eta \#}.
    \end{tikzcd}
\end{equation*}
In other words, the base change morphism $\mu_g$ is independent of the choice of the  compactification. 
\end{lem}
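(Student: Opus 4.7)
My plan is to decompose the required square into two compatible pieces: a \emph{middle} piece involving $\gamma_u, \gamma_w$ and the canonical transformation $\mathrm{BC}^{\Phi}$, which will fall out of Proposition \ref{magic diagram}; and an \emph{outer} piece involving $\beta_{\hat{g}}, \beta_\Phi, \beta_{\overline{g}}$, which will reduce to a pseudofunctoriality identity for $\beta$ with respect to the composition $\overline{g} = \hat{g} \circ \Phi$.

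For the magic-diagram step, observe first that $\Phi$ is automatically proper (since $\overline{g} = \hat{g}\Phi$ is proper and $\hat{g}$ is separated). Together with the fact that $u, w$ are open immersions and $w = \Phi \circ u$, the commutative (but a priori non-cartesian) square with $k = \id_Y$, $h = \Phi$, $u = w$, $\overline{u} = u$ satisfies the hypotheses of the second form of Proposition \ref{magic diagram}, applied with $\hat{Y}$ playing the role of $X$ and $f \circ \hat{g}$ playing the role of $f$. The conclusion is the commutative square
\begin{equation*}
    \begin{tikzcd}[column sep = large]
        w_{\sigma \#}\Psi_{f \circ \hat{g} \circ w} \arrow[r, "\gamma_w"] \arrow[d, "\mathrm{BC}^{\Phi}_\sigma", swap] & \Psi_{f \circ \hat{g}}w_{\eta \#} \arrow[d, "(\beta_\Phi u_{\eta \#}) \circ (\Psi_{f \circ \hat{g}}\mathrm{BC}^{\Phi}_\eta)"] \\
        \Phi_{\sigma *}u_{\sigma \#}\Psi_{f \circ \overline{g} \circ u} \arrow[r, "\Phi_{\sigma *}\gamma_u", swap] & \Phi_{\sigma *}\Psi_{f \circ \overline{g}}u_{\eta \#}.
    \end{tikzcd}
\end{equation*}
After applying $\hat{g}_{\sigma *}$ and using $\hat{g}_{\sigma *}\Phi_{\sigma *} = \overline{g}_{\sigma *}$, the left vertical arrow becomes precisely the canonical isomorphism $\mathrm{BC}^{\Phi}_\sigma \colon \hat{g}_{\sigma *}w_{\sigma \#} \simeq \overline{g}_{\sigma *}u_{\sigma \#}$ of the lemma.

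For the second step I invoke the pseudofunctoriality identity
\begin{equation*}
    \beta_{\overline{g}} = (\hat{g}_{\sigma *}\beta_\Phi) \circ (\beta_{\hat{g}}\Phi_{\eta *}) \colon \Psi_f \overline{g}_{\eta *} \longrightarrow \overline{g}_{\sigma *}\Psi_{f \circ \overline{g}},
\end{equation*}
equivalently $\beta_{\overline{g}}^{-1} = (\beta_{\hat{g}}^{-1}\Phi_{\eta *})(\hat{g}_{\sigma *}\beta_\Phi^{-1})$. Granted this, I post-compose the $\hat{g}_{\sigma *}$-shifted magic square with $\beta_{\hat{g}}^{-1}w_{\eta \#}$ on the right column, slide $\beta_{\hat{g}}^{-1}$ across $\hat{g}_{\eta *}\mathrm{BC}^{\Phi}_\eta$ by naturality, and then recognise the resulting composite $(\beta_{\hat{g}}^{-1}\Phi_{\eta *})(\hat{g}_{\sigma *}\beta_\Phi^{-1})$ as $\beta_{\overline{g}}^{-1}$. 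The outcome is exactly the outer square of the lemma.

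The main obstacle is the pseudofunctoriality identity for $\beta$. Although morally standard -- $\beta_g$ is assembled from a chain of $\Ex^*_*$'s and $\Ex_{\#*}$'s along the pentagon $(p_I)_\#(i_f)^*(j_f \circ \theta_f)_*(\theta_f)^*p_I^*$ defining $\Psi_f$, and each such $2$-cell is pseudofunctorial in the morphism $g$ -- writing it out cleanly requires stacking five naturality squares and tracking the identifications $\hat{g}_*\Phi_* = \overline{g}_*$ at each layer. Once this identity is in place, the rest of the argument is a short diagram chase combining it with the magic-diagram square above.
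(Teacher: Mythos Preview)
Your proposal is correct and follows essentially the same route as the paper: both decompose the square into a piece handled by Proposition~\ref{magic diagram} applied to the non-cartesian square
\[
\begin{tikzcd}[sep=large]
Y \arrow[r,"u"] \arrow[d,equal] & \overline{Y} \arrow[d,"\Phi"] \\
Y \arrow[r,"w"] & \hat{Y}
\end{tikzcd}
\]
and a piece that reduces to the pseudofunctoriality identity $\beta_{\overline{g}} = (\hat{g}_{\sigma*}\beta_{\Phi}) \circ (\beta_{\hat{g}}\Phi_{\eta*})$ together with the interchange law. The paper simply cites this identity as ``compatibility of base change morphisms $\beta_?$'' without further comment, whereas you spell out that it requires stacking the naturality squares for each constituent $2$-cell of $\Psi$; this extra care is appropriate but does not change the argument.
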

\begin{proof}
We decompose the diagram into two others 
\begin{equation*}
    \begin{tikzcd}[sep=large]
         \hat{g}_{\sigma*}w_{\sigma \#}\Psi_{f \circ \hat{g} \circ w} \arrow[r," \gamma_w"] \arrow[d,"\mathrm{BC}_{\sigma}^{\Phi}",swap] &  \hat{g}_{\sigma*}\Psi_{f \circ \hat{g}}w_{\eta \#}  \arrow[d,"\beta_{\Phi}\mathrm{BC}^{\Phi}_{\eta}"] \\ 
       \overline{g}_{\sigma*}u_{\sigma \#} \Psi_{f \circ \overline{g} \circ u} \arrow[r,"\gamma_u"] & \overline{g}_{\sigma*}\Psi_{f \circ \overline{g}}u_{\eta \#}
    \end{tikzcd} \ \ \ \ \ \ \ \ 
    \begin{tikzcd}[sep=large]
           \hat{g}_{\sigma*}\Psi_{f \circ \hat{g}}w_{\eta \#}  \arrow[d,"\beta_{\Phi}\mathrm{BC}^{\Phi}_{\eta \#}",swap] \arrow[r,"\beta_{\hat{g}}^{-1}"] & \Psi_f \hat{g}_{\eta*}w_{\eta \#}  \arrow[d,"\mathrm{BC}^{\Phi}_{\eta}"] \\ 
       \overline{g}_{\sigma*}\Psi_{f \circ \overline{g}}u_{\eta \#} \arrow[r,"\beta_{\overline{g}}^{-1}"] & \Psi_f \overline{g}_{\eta*}u_{\eta \#}.
    \end{tikzcd}
\end{equation*}
Let us prove that the right diagram is commutative. We break it again into 
\begin{equation*}
        \begin{tikzcd}[sep=large]
          \Psi_f \hat{g}_{\eta*}w_{\eta \#}  \arrow[d,"\mathrm{BC}^{\Phi}_{\eta}",swap] \arrow[r,"\beta_{\hat{g}}"]   & \hat{g}_{\sigma*}\Psi_{f \circ \hat{g}}w_{\eta \#}  \arrow[d,"\mathrm{BC}^{\Phi}_{\eta}"]   \\ 
      \Psi_f \hat{g}_{\eta*} \Phi_{\eta *}u_{\eta \#} \arrow[r,"\beta_{\hat{g}}"] \arrow[rd,"\beta_{\overline{g}}",swap]  &  \hat{g}_{\sigma*}\Psi_{f \circ \hat{g}}\Phi_{\eta *}u_{\eta \#}   \arrow[d,"\beta_{\Phi}"] \\ 
      & \overline{g}_{\sigma*}\Psi_{f \circ \overline{g}}u_{\eta \#}.
    \end{tikzcd}
\end{equation*}
The low triangle is commutative because of the compatibility of base change morphisms $\beta_?$. The square is commutative thanks to the interchange law of composition of natural transformations. The left diagram is decomposed into 
\begin{equation*}
        \begin{tikzcd}[sep=large]
         w_{\sigma \#}\Psi_{f \circ \hat{g} \circ w} \arrow[r," \mu_w"] \arrow[d,"\mathrm{BC}_{\sigma}^{\Phi}",swap] &  \Psi_{f \circ \hat{g}}w_{\eta \#}  \arrow[d,"\mathrm{BC}^{\Phi}_{\eta}"] \\ 
       \Phi_{\sigma*}u_{\sigma \#}  \Psi_{f \circ \overline{g} \circ u} \arrow[rd,"\mu_u",swap] & \Psi_{f \circ \hat{g}}\Phi_{\eta *}u_{\eta \#} \arrow[d,"\beta_{\Phi}"] \\ 
      & \Phi_{\sigma*}\Psi_{f \circ \overline{g}}u_{\eta \#}.
    \end{tikzcd}
\end{equation*}
But this is a direct application of proposition \ref{magic diagram} to the diagram
\begin{equation*}
    \begin{tikzcd}[sep=large]
        Y \arrow[r,"u"] \arrow[d,equal] & \overline{Y} \arrow[d,"\Phi"] \\ 
        Y \arrow[r,"w"] & \hat{Y}.
    \end{tikzcd}
\end{equation*}
\end{proof}
The collection of base change morphisms $\mu_?$ satisfies the following compatibility. 

\begin{prop} \label{compatibility of nearby cycles functors with exceptional direct images}
 Let $X,Y,Z$ be $k$-varieties and $h \colon Z \longrightarrow Y$, $g\colon Y \longrightarrow X$, $f\colon X \longrightarrow \mathbb{A}_k^1$ be morphisms of $k$-varieties. Then the following diagram is commutative
\begin{equation*}
        \begin{tikzcd}[sep=large]
              (g \circ h)_{\sigma!}\Psi_{f \circ g \circ h} \arrow[d,"\mu_h",swap] \arrow[r,"\mu_{g \circ h}"] &  \Psi_{f}(g \circ h)_{\eta!}\arrow[d,"\sim"]    \\ 
              g_{\sigma!}\Psi_{f \circ g}h_{\eta !} \arrow[r,"\mu_g"] &  \Psi_{f}g_{\eta!}h_{\eta!}.
        \end{tikzcd}
    \end{equation*}    
\end{prop}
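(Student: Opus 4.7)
The plan is to reduce the verification to the elementary building blocks $\beta$ and $\gamma$ appearing in the definition $\mu_? = \beta_?^{-1}\circ \gamma_?$. First I would fix compactifications $u\colon Y \hookrightarrow \overline{Y}$ with $\overline{g}\colon \overline{Y} \to X$ proper, and $\overline{u}\colon Z \hookrightarrow \overline{Z}$ with $\overline{h}\colon \overline{Z} \to Y$ proper, and then form, as in Subsection \ref{exceptional direct image}, the cartesian completion
\begin{equation*}
\begin{tikzcd}[sep=large]
\overline{Z} \arrow[d,"\overline{h}",swap] \arrow[r,"\hat{u}"] & \hat{Z} \arrow[d,"\hat{h}"] \\
Y \arrow[r,"u"] & \overline{Y}
\end{tikzcd}
\end{equation*}
with $\hat{h}$ proper and $\hat{u}$ an open immersion. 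Then $(\hat{u}\circ\overline{u},\overline{g}\circ\hat{h})$ is a compactification of $g\circ h$, and by the preceding independence-of-compactification lemma I may compute $\mu_{g\circ h}$ using this specific choice, so that $\mu_{g\circ h} = \beta_{\overline{g}\circ\hat{h}}^{-1}\circ \gamma_{\hat{u}\circ\overline{u}}$.

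The next step is to split $\gamma_{\hat{u}\circ\overline{u}}$ and $\beta_{\overline{g}\circ\hat{h}}^{-1}$ into their composable halves via pseudo-functoriality of $(-)_{\#}$ for smooth morphisms and of $(-)_{*}$ for general morphisms, transported across $\Psi$, and to insert the isomorphism $g_!h_! \xrightarrow{\sim} (g\circ h)_!$ from Subsection \ref{exceptional direct image}, which is precisely $\Ex_{\#*}(\Delta)$, in both the $\sigma$- and $\eta$-columns. After this preparation both paths of the main diagram factor through the common intermediate object $\overline{g}_{?*}\hat{h}_{?*}\hat{u}_{?\#}\overline{u}_{?\#}\Psi_{f\circ g\circ h}$ (with $?=\sigma$ upstream and $?=\eta$ downstream), and the diagram splits into a grid of elementary cells of three types: naturality (interchange-law) squares between $2$-morphisms acting on disjoint slots; applications of the definitions combined with Lemma \ref{proper base change for nearby cycles functor} to identify $\beta_{\overline{h}}$ and $\beta_{\overline{g}}$ as isomorphisms and to replace them by their inverses; and, crucially, one instance of Proposition \ref{magic diagram} applied to the cartesian square $\Delta$, which is exactly what exchanges $\hat{u}_{\#}$ past $\overline{h}_{*}$ compatibly with $\gamma$ for the open immersion $\hat{u}$.

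The main obstacle is the diagrammatic bookkeeping required to identify each cell of the unfolded grid as one of the three types above, and to track the directions of the various four-operation base change morphisms. The magic diagram is doing essentially all of the non-formal work here, since the only place the $\gamma$ for an open immersion ($\hat{u}$) genuinely meets a proper pushforward ($\overline{h}_{*}$) is through $\Delta$, and this interaction is mediated by Proposition \ref{magic diagram}; the remaining cells are formal consequences of pseudo-functoriality and the interchange law. As a sanity check I would first handle the two extreme cases separately — $g$ proper with $h$ an open immersion (so $\mu_g = \beta_g^{-1}$ and $\mu_h = \gamma_h$), and the reverse — each of which collapses to a single invocation of the magic diagram and therefore gives strong evidence that the pasting argument in the general case is correctly set up.
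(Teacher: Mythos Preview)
Your proposal is correct and follows essentially the same approach as the paper: choose compatible compactifications as in Subsection~\ref{exceptional direct image}, unfold $\mu_{g\circ h}$, $\mu_g$, $\mu_h$ into their $\gamma$- and $\beta^{-1}$-pieces, and reduce the resulting grid to interchange-law cells together with one application of Proposition~\ref{magic diagram} to the square $\Delta$. One small caveat: the square $\Delta$ produced by Nagata compactification of $u\circ\overline{h}$ is merely commutative, not cartesian, so you should invoke the second clause of Proposition~\ref{magic diagram} (open immersions and proper maps) rather than the cartesian one---but this does not affect your argument.
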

\begin{proof}
    Keep the notation of section \ref{exceptional direct image}, we need to show the commutativity of 
     \begin{equation*}
        \begin{tikzcd}[sep=large]
              & (\overline{g} \circ \hat{h})_{\sigma *}(\hat{u} \circ \overline{u})_{\sigma \#}\Psi_{f \circ g \circ h}  \arrow[d,"\Ex_{\#*}(\textnormal{C})^{-1}",swap]   \arrow[r,"\mu_{g \circ h}"] &  \Psi_{f} (\overline{g} \circ \hat{h})_{\eta *}(\hat{u} \circ \overline{u})_{\eta \#}  \arrow[dd,"\Ex_{\# *}(\textnormal{C})^{-1}"]    \\ 
              & \overline{g}_{\sigma*}u_{\sigma \#} \overline{h}_{\sigma *} \overline{u}_{\sigma \#}\Psi_{f \circ g \circ h} \arrow[d,"\mu_h",swap] \arrow[ld,swap,"\mu_w"] &   \\ 
            \overline{g}_{\sigma*}u_{\sigma \#} \overline{h}_{\sigma *}\Psi_{f \circ g \circ \overline{h}}\overline{u}_{\eta \#}   \arrow[r,"\beta_{\overline{h}}^{-1}"] &  \overline{g}_{\sigma*}u_{\sigma \#} \Psi_{f \circ g} \overline{h}_{\eta *} \overline{u}_{\eta \#}  \arrow[r,"\mu_g"]  & \Psi_{f}\overline{g}_{\eta*}u_{\eta \#}\hat{h}_{\eta *}\overline{u}_{\eta \#}. 
        \end{tikzcd}
    \end{equation*} 
    Since the diagram 
    \begin{equation*}
        \begin{tikzcd}[sep=large]
             (\overline{g} \circ \hat{h})_{\sigma *}(\hat{u} \circ \overline{u})_{\sigma \#}\Psi_{f \circ g \circ h} \arrow[d,"\mu_{\overline{u}}",swap] \arrow[r,"\Ex_{\#*}(\textnormal{C})^{-1}"] &   \overline{g}_{\sigma*}u_{\sigma \#} \overline{h}_{\sigma *} \overline{u}_{\sigma \#}\Psi_{f \circ g \circ h}  \arrow[d,"\mu_{\overline{u}}"] \\ 
               \overline{g}_{\sigma*}\hat{h}_{\sigma *}\hat{u}_{\sigma \#}\Psi_{f \circ g \circ \overline{h}}\overline{u}_{\eta \#} \arrow[r,"\Ex_{\#*}(\textnormal{C})^{-1}"] & \overline{g}_{\sigma*}u_{\sigma \#} \overline{h}_{\sigma *}\Psi_{f \circ g \circ \overline{h}} \overline{u}_{\eta \#}.
        \end{tikzcd}
    \end{equation*}
    is commutative by the interchange law, the previous diagram can be decomposed into a bigger diagram 
    \begin{equation*}
        \begin{tikzcd}[sep=large]
              (\overline{g} \circ \hat{h})_{\sigma *}(\hat{u} \circ \overline{u})_{\sigma \#}\Psi_{f \circ g \circ h}  \arrow[d,"\mu_{\overline{u}}",swap]   \arrow[r,"\mu_{\hat{u} \circ \overline{u}}"] & (\overline{g} \circ \hat{h})_{\sigma *}\Psi_{f \circ \overline{g} \circ \hat{h}} (\hat{u} \circ \overline{u})_{\eta \#} \arrow[r,"\beta_{\overline{g}\circ \hat{h}}^{-1}"] \arrow[d,"\beta_{\hat{h}}^{-1}"] & \Psi_{f} (\overline{g} \circ \hat{h})_{\eta *}(\hat{u} \circ \overline{u})_{\eta \#}  \arrow[ddd,"\Ex_{\# *}(\textnormal{C})^{-1}"]    \\ 
              \overline{g}_{\sigma*}\hat{h}_{\sigma *}\hat{u}_{\sigma \#}\Psi_{f \circ \overline{g} \circ \hat{h} \circ \hat{u}}\overline{u}_{\eta \#} \arrow[rdd,phantom,"\textnormal{(C1)}"] \arrow[ru,"\mu_{\hat{u}}",swap] \arrow[d,"\Ex_{\#*}(\Delta)^{-1}",swap] & \overline{g}_{\sigma*}\Psi_{f \circ \overline{g}}\hat{h}_{\eta *}(\hat{u} \circ \overline{u})_{\eta \#} \arrow[ddr,phantom,"\textnormal{(C2)}"]  \arrow[ru,"\beta_{\overline{g}}^{-1}",swap] \arrow[dd,"\Ex_{\#*}(\Delta)^{-1}"]&  \\ 
               \overline{g}_{\sigma*}u_{\sigma \#} \overline{h}_{\sigma *}\Psi_{f \circ \overline{g} \circ u \circ \overline{h}} \overline{u}_{\eta \#}\arrow[d,"\beta_{\overline{h}}^{-1}",swap] &  &  \\ 
                \overline{g}_{\sigma*}u_{\sigma \#}\Psi_{f \circ \overline{g} \circ u} \overline{h}_{\eta *} \overline{u}_{\eta \#}\arrow[r,"\mu_u"] &   \overline{g}_{\sigma*} \Psi_{f \circ \overline{g}} u_{\eta \#}\overline{h}_{\eta *} \overline{u}_{\eta \#}  \arrow[r,"\beta_{\overline{g}}^{-1}"]  & \Psi_{f}\overline{g}_{\eta *} u_{\eta \#}\overline{h}_{\eta *} \overline{u}_{\eta \#}.
        \end{tikzcd}
    \end{equation*} 
    In this diagram, the two upper triangles are commutative by definition. The commutativity of the square trapezoid $\textnormal{(C2)}$ follows from the interchange law. It remains to prove the commutativity of the square trapezoid $\textnormal{(C1)}$. After removing common terms, $\textnormal{(C1)}$ becomes 
    \begin{equation*}
        \begin{tikzcd}[sep=large]
            \hat{h}_{\sigma *}\hat{u}_{\sigma \#}\Psi_{f \circ \overline{g} \circ \hat{h} \circ \hat{u}} \arrow[r]  &  \hat{h}_{\sigma *}\Psi_{f \circ \overline{g} \circ \hat{h}}\hat{u}_{\eta \#} \\ 
             u_{\sigma \#} \overline{h}_{\sigma *}\Psi_{f \circ \overline{g} \circ u \circ \overline{h}} \arrow[u,"(\Ex_{\#*})"]   & \Psi_{f \circ \overline{g}} \hat{h}_{\eta *}\hat{u}_{\eta \#}  \arrow[u] \\ 
             u_{\sigma \#}\Psi_{f \circ \overline{g} \circ u}\overline{h}_{\eta *}\arrow[r] \arrow[u] &   \Psi_{f \circ \overline{g}}u_{\eta \#}\overline{h}_{\eta *}. \arrow[u,"(\Ex_{\#*})",swap]
        \end{tikzcd}
    \end{equation*}
    But this is an immediate application of proposition \ref{magic diagram} to the diagram $\textnormal{(C)}$. 
\end{proof}

We also have a compatibility between $\mu_?$ and $\beta_?$. The following corollary is a generalization of \cite[Proposition 3.1.10]{ayoub-thesis-2} to varieties (not necessarily quasi-projective). 
\begin{cor}
    Let $X,Y$ be $k$-varieties and $g\colon Y \longrightarrow X$, $f\colon X \longrightarrow \mathbb{A}_k^1$ be morphisms of $k$-varieties. Then the following diagram is commutative
    \begin{equation*}
        \begin{tikzcd}[sep=large]
            g_{\sigma!}\Psi_{f \circ g} \arrow[r,"\mu_g"] \arrow[d] & \Psi_f g_{\eta !}\arrow[d] \\ 
            g_{\sigma*}\Psi_{f \circ g} & \Psi_f g_{\eta *}. \arrow[l,"\beta_g"] 
        \end{tikzcd}
    \end{equation*}
\end{cor}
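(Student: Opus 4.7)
Choose a Nagata compactification $g = \bar{g}\circ u$ with $u\colon Y\hookrightarrow \overline{Y}$ a quasi-compact open immersion and $\bar{g}\colon \overline{Y}\to X$ proper. By the very construction of $\mu_g$ recalled above, $\mu_g$ factors as
\[
\bar{g}_{\sigma*}u_{\sigma\#}\Psi_{f\circ g}\xrightarrow{\bar{g}_{\sigma*}\gamma_u}\bar{g}_{\sigma*}\Psi_{f\circ\bar{g}}u_{\eta\#}\xrightarrow{\beta_{\bar{g}}^{-1}u_{\eta\#}}\Psi_f\,\bar{g}_{\eta*}u_{\eta\#}.
\]
By the compatibility of $\beta_{?}$ with composition (already exploited in the previous lemma), $\beta_g$ factors, in the opposite direction, as $\bar g_{\sigma*}\beta_u\circ \beta_{\bar g}u_{\eta*}$. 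Finally, since $u_\#=u_!$ for an open immersion and $\bar{g}_*=\bar{g}_!$ for a proper morphism, the canonical natural transformation $g_{?!}\to g_{?*}$ is nothing other than $\bar{g}_{?*}$ applied to the canonical $u_{?\#}\to u_{?*}$, for $?\in\{\eta,\sigma\}$.

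Plugging these three factorizations into the square of the statement, the square decomposes as a horizontal juxtaposition of two smaller squares. The right square is
\[
\begin{tikzcd}[sep=large]
\bar{g}_{\sigma*}\Psi_{f\circ\bar{g}}u_{\eta\#}\arrow[r,"\beta_{\bar{g}}^{-1}u_{\eta\#}"]\arrow[d] & \Psi_f\,\bar{g}_{\eta*}u_{\eta\#}\arrow[d] \\
\bar{g}_{\sigma*}\Psi_{f\circ\bar{g}}u_{\eta*} & \Psi_f\,\bar{g}_{\eta*}u_{\eta*}\arrow[l,"\beta_{\bar{g}}"']
\end{tikzcd}
\]
and commutes by the interchange law, i.e.\ by naturality of the isomorphism $\beta_{\bar{g}}^{-1}$ applied to the arrow of functors $u_{\eta\#}\to u_{\eta*}$. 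The left square is $\bar{g}_{\sigma*}$ applied to
\[
\begin{tikzcd}[sep=large]
u_{\sigma\#}\Psi_{f\circ g}\arrow[r,"\gamma_u"]\arrow[d] & \Psi_{f\circ\bar{g}}u_{\eta\#}\arrow[d] \\
u_{\sigma*}\Psi_{f\circ g} & \Psi_{f\circ\bar{g}}u_{\eta*}\arrow[l,"\beta_u"']
\end{tikzcd}
\tag{$\ast$}
\]
so the whole problem reduces to the commutativity of $(\ast)$, where $u$ is an open immersion.

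For $(\ast)$, observe that $\gamma_u$ and $\beta_u$ are the two mates of the same natural transformation $\alpha_u\colon u_\sigma^*\Psi_{f\circ\bar{g}}\to \Psi_{f\circ g}u_\eta^*$, taken respectively across the adjunctions $(u_\#,u^*)$ and $(u^*,u_*)$. Unfolding the definitions recalled in the excerpt, $\gamma_u$ is a composite of base change morphisms of type $\Ex^*_{\#}$, $\Ex_{\#*}$ and $\Ex_{\#\#}$, while $\beta_u$ is the analogous composite of type $\Ex^*_*$, $\Ex_{**}$ and $\Ex_{\#*}$; these are rung-by-rung glued using the units/counits of the triple adjunction $(u_\#,u^*,u_*)$, and the canonical natural transformation $u_\#\to u_*$ admits the standard factorization through $u_\#u^*u_*$ using those same units/counits. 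The verification of $(\ast)$ is therefore a diagram chase: insert the canonical map $u_{?\#}\to u_{?*}$ at each of the four stages of the definition of $\Psi$ and check each sub-rectangle by the interchange law, using the triangle identities to contract the two intermediate rectangles involving $u^*u_\#\cong \id\cong u^*u_*$. I expect this rung-by-rung chase to be the only non-formal step of the proof; everything else is composition compatibilities of $\beta_?$ and $\mu_?$ and naturality of mates.
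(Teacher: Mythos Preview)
Your reduction is exactly the paper's: choose a compactification, split the square into a right piece (your $\beta_{\bar g}$-square, the paper's $\Delta_2$) handled by the interchange law, and a left piece ($\ast$), which is $\bar g_{\sigma*}$ applied to the open-immersion square (the paper's $\Delta_1$). So far the two proofs are identical.

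Where you diverge is in the treatment of $(\ast)$. You propose a rung-by-rung chase through the four-stage definition of $\Psi$, comparing the $\Ex$-transformations building $\gamma_u$ with those building $\beta_u$ via the triangle identities for $(u_\#,u^*,u_*)$. This is a valid strategy---it is the standard mate-compatibility argument for a triple adjunction, carried out componentwise---but you do not actually perform it, and it is the tedious part. The paper avoids this entirely with a one-line trick: since $u$ is an open immersion, both $u_{\sigma\#}$ and $u_{\sigma*}$ are fully faithful, so it suffices to check $(\ast)$ after applying $u_\sigma^*$. But then $u_\sigma^* u_{\sigma\#}\simeq \id \simeq u_\sigma^* u_{\sigma*}$, and both horizontal arrows become (by the definition of $\gamma_u$ and $\beta_u$ as mates of $\alpha_u$) the composite $\Psi_{f\circ g}\to u_\sigma^*\Psi_{f\circ\bar g}u_{\eta?}\to \Psi_{f\circ g}u_\eta^*u_{\eta?}\to \Psi_{f\circ g}$, which is the identity; the resulting square is then commutative for trivial reasons.

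So your outline is correct, but you should replace the proposed chase for $(\ast)$ with the full-faithfulness argument: it turns the ``only non-formal step'' into a formal one.
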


\begin{proof}
    Let 
    \begin{equation*}
        \begin{tikzcd}[sep=large]
             Y \arrow[r,"u"] \arrow[rd,"g",swap] & \overline{Y} \arrow[d,"\overline{g}"] \\ 
        & X
        \end{tikzcd}
    \end{equation*}
    be a compactification of $g$. The commutativity of the diagram above follows from the commutativity of
    \begin{equation*}
        \begin{tikzcd}[sep=large]
            \overline{g}_{\sigma*}u_{\sigma \#}\Psi_{f \circ g} \arrow[dr,phantom,"\textnormal{(C1)}"] \arrow[r,"\mu_g"] \arrow[d] &  \overline{g}_{\sigma*}\Psi_{f \circ \overline{g}}u_{\eta \#} \arrow[dr,phantom,"\textnormal{(C2)}"] \arrow[r,"\beta^{-1}_{\overline{g}}"] \arrow[d] & \Psi_f \overline{g}_{\eta *}u_{\eta \#} \arrow[d] \\ 
            \overline{g}_{\sigma*}u_{\sigma *}\Psi_{f \circ g} & \overline{g}_{\sigma*}\Psi_{f \circ \overline{g}}u_{\eta *} \arrow[l,"\beta_u"] & \Psi_f \overline{g}_{\eta *}u_{\eta *}. \arrow[l,"\beta_{\overline{g}}"] 
        \end{tikzcd}
    \end{equation*}
    By the interchange law, the square $\textnormal{(C2)}$ is commutative. The commutativity of $\textnormal{(C1)}$ boils down to the commutativity of
     \begin{equation*}
        \begin{tikzcd}[sep=large]
           u_{\sigma \#}\Psi_{f \circ g} \arrow[dr,phantom,"\textnormal{(C1)}"] \arrow[r,"\mu_g"] \arrow[d] &  \Psi_{f \circ \overline{g}}u_{\eta \#}  \arrow[d]  \\ 
          u_{\sigma *}\Psi_{f \circ g} & \Psi_{f \circ \overline{g}}u_{\eta *}.\arrow[l,"\beta_u"]
        \end{tikzcd}
    \end{equation*}
    But $u_{\sigma \#}$ and $u_{\sigma*}$ are fully faithful, so it suffices to prove that $\textnormal{(C1)}$ is commutative after we apply $u_{\sigma}^*$. The diagram becomes 
     \begin{equation*}
        \begin{tikzcd}[sep=large]
         \Psi_{f \circ g} \arrow[r] \arrow[d] &  u_{\sigma}^*\Psi_{f \circ \overline{g}}u_{\eta \#}  \arrow[d]  \\ 
          \Psi_{f \circ g} & u^*_{\sigma} \Psi_{f \circ \overline{g}}u_{\eta *} \arrow[l]
        \end{tikzcd}
    \end{equation*}
    which is commutative for trivial reason. 
\end{proof}

\bibliographystyle{alpha}
\bibliography{ref}

\begin{thebibliography}{Ayo14b}

\bibitem[AIS17]{ayoub+florian+julien-2017}
Joseph Ayoub, Florian Ivorra, and Julien Sebag.
\newblock Motives of rigid analytic tubes and nearby motivic sheaves.
\newblock {\em Ann. Sci. \'{E}c. Norm. Sup\'{e}r. (4)}, 50(6):1335--1382, 2017.

\bibitem[Ayo07a]{ayoub-thesis-1}
Joseph Ayoub.
\newblock Les six op\'{e}rations de {G}rothendieck et le formalisme des cycles \'{e}vanescents dans le monde motivique. {I}.
\newblock {\em Astérisque}, (314):x+466 pp. (2008), 2007.

\bibitem[Ayo07b]{ayoub-thesis-2}
Joseph Ayoub.
\newblock Les six op\'{e}rations de {G}rothendieck et le formalisme des cycles \'{e}vanescents dans le monde motivique. {II}.
\newblock {\em Ast\'{e}risque}, (315):vi+364 pp. (2008), 2007.

\bibitem[Ayo07c]{ayoub-2007}
Joseph Ayoub.
\newblock The motivic vanishing cycles and the conservation conjecture.
\newblock In {\em Algebraic cycles and motives. {V}ol. 1}, volume 343 of {\em London Math. Soc. Lecture Note Ser.}, pages 3--54. Cambridge Univ. Press, Cambridge, 2007.

\bibitem[Ayo10]{ayoub-2010}
Joseph Ayoub.
\newblock Note sur les op\'{e}rations de {G}rothendieck et la r\'{e}alisation de {B}etti.
\newblock {\em J. Inst. Math. Jussieu}, 9(2):225--263, 2010.

\bibitem[Ayo14a]{ayoub-icm-2014}
Joseph Ayoub.
\newblock A guide to (\'{e}tale) motivic sheaves.
\newblock In {\em Proceedings of the {I}nternational {C}ongress of {M}athematicians---{S}eoul 2014. {V}ol. {II}}, pages 1101--1124. Kyung Moon Sa, Seoul, 2014.

\bibitem[Ayo14b]{ayoub-2014}
Joseph Ayoub.
\newblock La r\'{e}alisation \'{e}tale et les op\'{e}rations de {G}rothendieck.
\newblock {\em Ann. Sci. \'{E}c. Norm. Sup\'{e}r. (4)}, 47(1):1--145, 2014.

\bibitem[Ayo15]{ayoub-2015}
Joseph Ayoub.
\newblock Motifs des vari\'{e}t\'{e}s analytiques rigides.
\newblock {\em M\'{e}m. Soc. Math. Fr. (N.S.)}, (140-141):vi+386, 2015.

\bibitem[Bit04]{bittner-2004}
Franziska Bittner.
\newblock The universal {E}uler characteristic for varieties of characteristic zero.
\newblock {\em Compos. Math.}, 140(4):1011--1032, 2004.

\bibitem[Bit05]{bittner-2005}
Franziska Bittner.
\newblock On motivic zeta functions and the motivic nearby fiber.
\newblock {\em Math. Z.}, 249(1):63--83, 2005.

\bibitem[Bro83]{broughton-1983}
S.~A. Broughton.
\newblock On the topology of polynomial hypersurfaces.
\newblock In {\em Singularities, {P}art 1 ({A}rcata, {C}alif., 1981)}, volume~40 of {\em Proc. Sympos. Pure Math.}, pages 167--178. Amer. Math. Soc., Providence, RI, 1983.

\bibitem[CD19]{cisinski+deglise-2019}
Denis-Charles Cisinski and Fr\'{e}d\'{e}ric D\'{e}glise.
\newblock {\em Triangulated categories of mixed motives}.
\newblock Springer Monographs in Mathematics. Springer, Cham, [2019] \copyright 2019.

\bibitem[CLO12]{conrad-2012}
Brian Conrad, Max Lieblich, and Martin Olsson.
\newblock Nagata compactification for algebraic spaces.
\newblock {\em J. Inst. Math. Jussieu}, 11(4):747--814, 2012.

\bibitem[DL98]{denef+loeser-1998}
Jan Denef and Fran\c{c}ois Loeser.
\newblock Motivic {I}gusa zeta functions.
\newblock {\em J. Algebraic Geom.}, 7(3):505--537, 1998.

\bibitem[DL99]{denef+loeser-1999}
Jan Denef and Fran\c{c}ois Loeser.
\newblock Germs of arcs on singular algebraic varieties and motivic integration.
\newblock {\em Invent. Math.}, 135(1):201--232, 1999.

\bibitem[DL02]{denef+loeser-2002}
Jan Denef and Fran\c{c}ois Loeser.
\newblock Lefschetz numbers of iterates of the monodromy and truncated arcs.
\newblock {\em Topology}, 41(5):1031--1040, 2002.

\bibitem[FR20]{raibaut+fantini-2020}
Lorenzo Fantini and Michel Raibaut.
\newblock Motivic and analytic nearby fibers at infinity and bifurcation sets.
\newblock pages 197--220. World Sci. Publ., Hackensack, NJ, [2020] \copyright 2020.

\bibitem[GLM06]{guibert+loeser+merle-2006}
Gil Guibert, Fran\c{c}ois Loeser, and Michel Merle.
\newblock Iterated vanishing cycles, convolution, and a motivic analogue of a conjecture of {S}teenbrink.
\newblock {\em Duke Math. J.}, 132(3):409--457, 2006.

\bibitem[Hoy14]{hoyois-2014}
Marc Hoyois.
\newblock A quadratic refinement of the {G}rothendieck-{L}efschetz-{V}erdier trace formula.
\newblock {\em Algebr. Geom. Topol.}, 14(6):3603--3658, 2014.

\bibitem[IS13]{florian+julien-2013}
Florian Ivorra and Julien Sebag.
\newblock Nearby motives and motivic nearby cycles.
\newblock {\em Selecta Math. (N.S.)}, 19(4):879--902, 2013.

\bibitem[IS21]{florian+julien-2021}
Florian Ivorra and Julien Sebag.
\newblock Quasi-unipotent motives and motivic nearby sheaves.
\newblock {\em Asian J. Math.}, 25(1):89--116, 2021.

\bibitem[IS23]{florian+julien-2023}
Florian Ivorra and Julien Sebag.
\newblock Nearby motivic sheaves of weighted equivariant functions.
\newblock {\em Invent. Math.}, 232(2):833--862, 2023.

\bibitem[Loe00]{loeser-2000}
Fran\c{c}ois Loeser.
\newblock The {M}ilnor fiber as a virtual motive.
\newblock In {\em Singularities---{S}apporo 1998}, volume~29 of {\em Adv. Stud. Pure Math.}, pages 203--220. Kinokuniya, Tokyo, 2000.

\bibitem[Loe09]{loeser-2009}
Fran\c{c}ois Loeser.
\newblock Seattle lectures on motivic integration.
\newblock In {\em Algebraic geometry---{S}eattle 2005. {P}art 2}, volume~80 of {\em Proc. Sympos. Pure Math.}, pages 745--784. Amer. Math. Soc., Providence, RI, 2009.

\bibitem[MT13]{matsui+takeuchi-2013}
Yutaka Matsui and Kiyoshi Takeuchi.
\newblock Monodromy at infinity of polynomial maps and {N}ewton polyhedra (with an appendix by {C}. {S}abbah).
\newblock {\em Int. Math. Res. Not. IMRN}, (8):1691--1746, 2013.

\bibitem[MV99]{voevodsky-1999}
Fabien Morel and Vladimir Voevodsky.
\newblock {${\bf A}^1$}-homotopy theory of schemes.
\newblock {\em Inst. Hautes \'{E}tudes Sci. Publ. Math.}, (90):45--143 (2001), 1999.

\bibitem[Par95]{parusinski-1995}
Adam Parusi\'{n}ski.
\newblock On the bifurcation set of complex polynomial with isolated singularities at infinity.
\newblock {\em Compositio Math.}, 97(3):369--384, 1995.

\bibitem[Pha83]{pham-1983}
Fr\'{e}d\'{e}ric Pham.
\newblock Vanishing homologies and the {$n$} variable saddlepoint method.
\newblock In {\em Singularities, {P}art 2 ({A}rcata, {C}alif., 1981)}, volume~40 of {\em Proc. Sympos. Pure Math.}, pages 319--333. Amer. Math. Soc., Providence, RI, 1983.

\bibitem[PS08]{steenbrink}
Chris A.~M. Peters and Joseph H.~M. Steenbrink.
\newblock {\em Mixed {H}odge structures}, volume~52 of {\em Ergebnisse der Mathematik und ihrer Grenzgebiete. 3. Folge. A Series of Modern Surveys in Mathematics [Results in Mathematics and Related Areas. 3rd Series. A Series of Modern Surveys in Mathematics]}.
\newblock Springer-Verlag, Berlin, 2008.

\bibitem[Rai12]{raibaut-2012}
Michel Raibaut.
\newblock Singularit\'{e}s \`a l'infini et int\'{e}gration motivique.
\newblock {\em Bull. Soc. Math. France}, 140(1):51--100, 2012.

\bibitem[Sai90]{saito}
Morihiko Saito.
\newblock Mixed hodge modules.
\newblock {\em Publications of the Research Institute for Mathematical Sciences vol. 26 iss. 2}, 26, 1990.

\bibitem[sga73]{sga4}
{\em Th\'{e}orie des topos et cohomologie \'{e}tale des sch\'{e}mas. {T}ome 3}.
\newblock Lecture Notes in Mathematics, Vol. 305. Springer-Verlag, Berlin-New York, 1973.
\newblock S\'{e}minaire de G\'{e}om\'{e}trie Alg\'{e}brique du Bois-Marie 1963--1964 (SGA 4), Dirig\'{e} par M. Artin, A. Grothendieck et J. L. Verdier. Avec la collaboration de P. Deligne et B. Saint-Donat.

\bibitem[TT16]{takeuchi+tibuar-2016}
Kiyoshi Takeuchi and Mihai Tib\u{a}r.
\newblock Monodromies at infinity of non-tame polynomials.
\newblock {\em Bull. Soc. Math. France}, 144(3):477--506, 2016.

\bibitem[Ver76]{verdier-1976}
Jean-Louis Verdier.
\newblock Stratifications de {W}hitney et th\'{e}or\`eme de {B}ertini-{S}ard.
\newblock {\em Invent. Math.}, 36:295--312, 1976.

\bibitem[Voe98]{voevodsky-1998}
Vladimir Voevodsky.
\newblock {$\bold A^1$}-homotopy theory.
\newblock In {\em Proceedings of the {I}nternational {C}ongress of {M}athematicians, {V}ol. {I} ({B}erlin, 1998)}, number Extra Vol. I, pages 579--604, 1998.

\bibitem[VuT84]{hhvui+ldtrang-1984}
H\`a~Huy Vui and L\^{e}~D\ {u}ng Tr\'{a}ng.
\newblock Sur la topologie des polyn\^{o}mes complexes.
\newblock {\em Acta Math. Vietnam.}, 9(1):21--32 (1985), 1984.

\end{thebibliography}

\end{document}